\DeclareMathOperator{\End}{End}
\DeclareMathOperator{\Hom}{Hom}
\DeclareMathOperator{\intP}{\underline{\otimes}}
\DeclareMathOperator*{\Colim}{colim}
\DeclareMathOperator{\GL}{GL}
\newcommand{\Rep}{\mathrm{Rep\,}}
\newcommand{\cat}[1]{\mathcal{#1}}
\newcommand{\colim}[2]{\Colim_{{\bf h}_{#2}\to #1}}
\renewcommand{\mod}[1]{{#1}\text{-}\mathrm{Mod}}
\newcommand{\ev}{\mathrm{ev}}
\newcommand{\sgn}{\mathrm{sgn}}
\newcommand{\SG}{\mathbf S}
\newcommand{\AG}{\mathbf A}
\newcommand{\id}{\mathrm{id}}
\newcommand{\ZZ}{\mathbf Z}
\newcommand{\CC}{\mathbf C}
\newcommand{\sch}{\mathrm S}
\newcommand{\asch}{\mathrm{AS}}
\newtheorem{theorem}{Theorem}[section]
\newtheorem{lemma}[theorem]{Lemma}
\newtheorem{corollary}[theorem]{Corollary}
\theoremstyle{definition}
\newtheorem{definition}[theorem]{Definition}
\theoremstyle{remark}
\newtheorem{example}[theorem]{Example}
\newtheorem{remark}[theorem]{Remark}
\numberwithin{enumi}{theorem}
\renewcommand{\theenumi}{\thetheorem.\alph{enumi}}
\title[Alternating Schur Algebras and Koszul Duality]{Schur Algebras for the Alternating Group\\and Koszul Duality}
\author[Geetha]{Thangavelu Geetha}
\address{Indian Institute of Science Education and Research, Thiruvananthapuram}\email{geetha\_curie@yahoo.co.in}
\author[Prasad]{Amritanshu Prasad}
\address{The Institute of Mathematical Sciences, HBNI, Chennai}\email{amri@imsc.res.in}
\author[Srivastava]{Shraddha Srivastava}
\address{The Institute of Mathematical Sciences, HBNI, Chennai}\email{maths.shraddha@gmail.com}
\subjclass[2010]{20G43,20G05,05E10}
\keywords{Schur algebra, Koszul duality, Schur-Weyl duality, alternating group}
\begin{document}

\begin{abstract}
  We introduce the alternating Schur algebra $\asch_F(n,d)$ as the commutant of the action of the alternating group $\AG_d$ on the $d$-fold tensor power of an $n$-dimensional $F$-vector space.
  When $F$ has characteristic different from $2$, we give a basis of $\asch_F(n,d)$ in terms of bipartite graphs, and a graphical interpretation of the structure constants.
  We introduce the abstract Koszul duality functor on modules for the even part of any $\ZZ/2\ZZ$-graded algebra.
  The algebra $\asch_F(n,d)$ is $\ZZ/2\ZZ$-graded, having the classical Schur algebra $\sch_F(n,d)$ as its even part.
  This leads to an approach to Koszul duality for $\sch_F(n,d)$-modules that is amenable to combinatorial methods.
  We characterize the category of $\asch_F(n,d)$-modules in terms of $\sch_F(n,d)$-modules and their Koszul duals.
  We use the graphical basis of $\asch_F(n,d)$ to study the dependence of the behavior of derived Koszul duality on $n$ and $d$.
\end{abstract}
\maketitle
\section{Introduction}
\label{sec:introduction}
\subsection{Schur-Weyl duality and its variants}
\label{sec:dualities}
Frobenius determined the irreducible characters of the symmetric group $\SG_{d}$ over $\CC$, the field of complex numbers, in 1900 \cite{frobenius1900}.
Building on this, Schur classified the irreducible polynomial representations of $\GL_n(\CC)$ and computed their characters in his PhD thesis \cite{schur_thesis}.
The group $\GL_n(\CC)$ acts on the factors of $(\CC^n)^{\otimes d}$, while $\SG_d$ permutes the tensor factors.
In 1927, Schur used these commuting actions to reprove the results of his dissertation \cite{Schur1927}.
Following Weyl's expositions of this method \cite{Weyl,Weyl-quantum}, it is known as Schur-Weyl duality.

Over the years, several variants of Schur-Weyl duality have emerged.
Shrinking $\GL_n(\CC)$ to the orthogonal group $\mathrm O_n(\CC)$, Brauer obtained the duality between Brauer algebras $\mathrm{Br}_d(n)$ and $\mathrm O_n(\CC)$ \cite{MR1503378}.
Motivated by the Potts model in statistical mechanics, Jones \cite{Jones} and Martin \cite{Martin} further shrunk $\mathrm O_n(\CC)$ down to $\SG_n$, obtaining the partition algebras $\mathrm P_d(n)$.
Bloss \cite{Bloss} reduced $\SG_n$ to $\AG_n$ to obtain an algebra $\mathrm{AP}_d(n)$ which coincides with the partition algebra when $n\geq 2d+2$.
We take the smallest possible step in the opposite direction: we reveal what takes the place of the polynomial representations of $\GL_n(\CC)$ when the action of the symmetric group $\SG_d$ is restricted to the alternating group $\AG_d$.
The situation is summarized in Table~\ref{tab:dualities}.
The significance of this investigation lies in its connection with the Koszul duality functor on the category of homogeneous polynomial representations of $\GL_n(\CC)$ of degree $d$.
\begin{table}
  \begin{displaymath}
    \begin{array}{|ccc|}
      \hline
      & {F^n}^{\otimes d} &\\
      \hline
      {??} & \text{\textbf{This article}} &  {\AG_d} \\
      \cup & & \cap\\
      {\GL_n(F)}& \text{Schur-Weyl} & {\SG_d}\\
      \cup & & \cap\\
      {\mathrm O_n(F)} & \text{Brauer} & {\mathrm{Br}_d(n)}\\
      \cup & & \cap\\
      {\SG_n} & \text{Martin and Jones} & {\mathrm P_d(n)}\\
      \cup & & \cap \\
      {\AG_{n}} & \text{Bloss}& {\mathrm{AP}_{d}(n)} \\
      \hline
    \end{array}
  \end{displaymath}
  \caption{Dualities arising from tensor space\label{tab:dualities}}
\end{table}
\subsection{Schur algebras for the alternating group}
\label{sec:schur-algebr-altern}
Motivated by Green \cite[Theorem~2.6c]{MR2349209}, define the Schur algebra as
\begin{displaymath}
  \sch_F(n,d) = \End_{\SG_d}((F^n)^{\otimes d})
\end{displaymath}
for any field $F$, and positive integers $n$ and $d$.
When $F$ is infinite, then $\sch_F(n,d)$-modules are the same as homogeneous polynomial representations of $\GL_n(F)$ of degree $d$ (see \cite[Section 2.4]{MR2349209} and \cite[Section 6.2]{rtcv}).
Define the alternating Schur algebra $\asch_F(n,d)$ by replacing $\SG_d$ by $\AG_d$ in the definition above:
\begin{displaymath}
  \asch_F(n,d) = \End_{\AG_d}((F^n)^{\otimes d}).
\end{displaymath}
When $F$ has characteristic different from $2$, this algebra has a decomposition (Lemma~\ref{lemma:z2-grading})
\begin{equation}
  \label{eq:intro}
  \asch_F(n,d)=\sch_F(n,d)\oplus \sch^-_{F}(n,d)
\end{equation}
as a $\mathbb{Z}/2\mathbb{Z}$-graded algebra.
Here $\sch^-_{F}(n,d)=\Hom_{\SG_d}((F^{n})^{\otimes d}, (F^{n})^{\otimes d}\otimes \sgn)$, where $\sgn$ denotes the sign character of $\SG_d$.
The subspace $\sch^-_F(n,d)$ is an\linebreak $(\sch_F(n,d),\sch_F(n,d))$-bimodule.

When $n^2<d$, then $\sch^-_F(n,d)=0$, and $\asch_F(n,d)=\sch_F(n,d)$, as observed by Regev~\cite[Theorem 1]{MR1898388}. But when $n^{2}\geq d$, $\sch^-_{F}(n,d)\neq 0$, and in Lemma~\ref{lm:tilting}, we note that $\sch^-_{F}(n,d)$ is a full tilting left $\sch_F(n,d)$-module as studied by Donkin in \cite[Section 3]{Donkin}.
\subsection{Bases and structure constants}
\label{sec:bases-struct-const}
Schur \cite{Schur1927} gave a combinatorial description of a basis and the corresponding structure constants of the Schur algebra (see also \cite[Section~2.3]{MR2349209}).
By indexing Schur's basis of $\sch_F(n,d)$ by bipartite multigraphs with $n+n$ vertices and $d$ edges, M\'endez~\cite{MR1897636} (see also Geetha and Prasad~\cite{structure}) gave a graphic interpretation of the structure constants.
We describe a basis of $\sch^-_F(n,d)$ in terms of bipartite simple graphs in Theorem~\ref{thm:basis_S-}.
So from the decomposition~\eqref{eq:intro}, a basis of $\asch_F(n,d)$ is obtained.
A graphic interpretation of the structure constants of $\asch_F(n,d)$ is given in Theorems~\ref{theorem:schur-structure-const} and \ref{theorem:stru-consts-alt}.
This will be used to derive properties of $\asch_F(n,d)$, its bimodule $\sch^-_F(n,d)$, and Koszul duality.
\subsection{Koszul duality and modules}
\label{sec:kosz-dual-modul}
The term \emph{Koszul duality} is used for several constructions which interchange the roles of exterior and symmetric powers.

The earliest notion of Koszul duality was introduced by Priddy \cite{MR0265437}.
It applies to \emph{pre-Koszul algebras}, which are also called \emph{quadratic algebras}.
A pre-Koszul algebra is a quotient of a tensor algebra $T(V)=\bigoplus_{n\geq 0}\otimes^n V$ by a two-sided ideal $I$ that is generated in degree two.
Its Koszul dual is the algebra $T(V^*)/(I\cap (V\otimes V))^\perp$; the quotient of the dual tensor algebra by the annihilator in degree two of $I$.
In this setting the Koszul dual of the symmetric algebra of $V$ is the exterior algebra of $V^*$.

Bernstein, Gelfand, and Gelfand \cite[Theorem 3]{MR509387} introduced an equivalence between the bounded derived categories of graded modules over symmetric and exterior algebras, which was called the  Koszul duality functor by Beilinson, Ginsburg, and Schectman \cite{MR1048505}.

Friedlander and Suslin~\cite{MR1427618} introduced the category of \emph{strict polynomial functors} of degree $d$ as the representations of the Schur category of degree $d$, for each non-negative integer $d$ (see Section~\ref{sec:strict-polyn-funct}).
The category of strict polynomial functors of degree $d$ unifies the categories of homogeneous polynomial representations of $\GL_n(F)$ of degree $d$ across all $n$.
Standard examples of strict polynomial functors of degree $d$ are the $d$th tensor power functor $\otimes^{d}$, the $d$th symmetric power functor $\text{Sym}^{d}$, and the $d$th exterior power functor $\wedge^{d}$.
Evaluating a strict polynomial functor of degree $d$ at $F^n$ gives an $\sch_F(n,d)$-module for each $n$.
Friedlander and Suslin showed that this evaluation functor is an equivalence of categories when $n\geq d$.
Cha\l{}upnik \cite{MR2414328} and Touz\'e \cite{MR3331175} used the term Koszul duality to refer to a functor on the category of strict polynomial functors of degree $d$ which takes the Schur functor associated to the partition $\lambda$ of $d$ to the Weyl functor associated with the partition $\lambda'$ \emph{conjugate} to $\lambda$.
Krause \cite{MR3077659} discovered an internal tensor product on the category of strict polynomial functors of fixed degree $d$.
Given such a tensor product it was then natural for him to define Koszul duality in this category as tensor product with $\wedge^d$.
This definition is different from the Koszul duality functors defined earlier by Cha\l{}upnik and Touz\'e.
Those coincide with a duality defined by Ringel \cite{MR1128706} using tilting modules for quasi-hereditary algebras.
This tilting module was described by Donkin \cite{Donkin} in the case of Schur algebras.

We introduce the term \emph{abstract Koszul duality} to refer to a very simple functor which makes sense for any $\ZZ/2\ZZ$-graded algebra $\asch = \sch\oplus \sch^-$.
The abstract Koszul dual of an $\sch$-module $V$ is defined as
\begin{displaymath}
  D(V) = \sch^-\otimes_{\sch} V.
\end{displaymath}
The multiplication operation on $\asch$ gives rise to an $(\sch,\sch)$-bimodule homomorphism $\phi:\sch^-\otimes_{\sch} \sch^-\to S$ and hence a natural transformation from $D\circ D$ to the identity functor on the category of $\sch$-modules.
We prove (Theorem \ref{thm:iso}) that the category of $\asch$-modules is same as the category of pairs $(M,\theta_{M})$ where $M$ is an $\sch$-module and $\theta_{M}:D(M)\to M$ is compatible with $\phi$ in the sense of \eqref{eq:2}.

In Section~\ref{sec:koszul-duality-schur}, we specialize to the case $\asch_F(n,d)=\sch_F(n,d)\oplus \sch^-_F(n,d)$ to obtain a Koszul duality functor $D$ on the category of $\sch_F(n,d)$-modules.
In Theorem \ref{thm:duals_comapre}, we show that the evaluation at $F^n$ of the Koszul duality functor of Krause is naturally isomorphic to our Koszul duality functor when $n\geq d$.
In this sense, our abstract Koszul duality functor on Schur algebras coincides with Krause's Koszul duality.

Our description of the structure constants of $\asch_F(n,d)$ allow us to give a direct combinatorial proof of the well-known fact that, when $n\geq d$ and when the characteristic of $F$ is $0$ or greater than $d$, then abstract Koszul duality is an equivalence (Theorem \ref{theorem:duality-is-equiv}).

Krause~\cite{MR3077659} showed that derived Koszul duality functor is an auto-equivalence of the unbounded derived category of strict polynomial functors.
Since the evaluation functor is an equivalence, this implies that derived Koszul duality is an auto-equivalence at the level of unbounded derived category of $\sch_F(n,d)$-modules when $n\geq d$.
However, this does not address the case where $n<d$.
Using our combinatorial methods, we show that derived Koszul duality is not an equivalence when $n<d$ (Theorem~\ref{prop:deri_kos}).
This proof uses a criterion of Happel \cite{Happel} for a tensor functor to be a derived equivalence.
In the context of derived Koszul duality, this criterion requires that the canonical algebra homomorphism $\sch_F(n,d)\to \End_{\sch_F(n,d)}(\sch^-_F(n,d))$ is an isomorphism.
Donkin \cite[Proposition 3.7]{Donkin} proved this when $n\geq d$.
When the characteristic of $F$ is not $2$ we give a combinatorial proof of Donkin's result, and also show that it fails when $n<d$ (Theorem~\ref{thm:iso_donkin}).
Figure~\ref{fig:kd} on page~\pageref{fig:kd} describes the behavior of Koszul duality for all values of the parameters $n$ and $d$.

We conclude this paper by discussing a possible application of our techniques to Bloss's alternating partition algebra, and a diagrammatic interpretation of the Schur category (Section~\ref{sec:concluding-remarks}).
\section{The alternating Schur algebra}
\label{sec:altern-schur-algebra}
Let $F$ be a field of characteristic different from $2$, $n$ and $d$ be positive integers.
The symmetric group $\SG_d$ acts on the tensor space $(F^n)^{\otimes d}$ by permuting the tensor factors.
The Schur algebra can be defined as
\begin{displaymath}
  \sch_F(n,d) := \End_{\SG_d} ((F^n)^{\otimes d}).
\end{displaymath}
By restricting the action of $\SG_d$ to the alternating group $\AG_d$, define the \emph{alternating Schur algebra} as
\begin{displaymath}
  \asch_F(n,d) := \End_{\AG_d} ((F^n)^{\otimes d}).
\end{displaymath}
Clearly, $\sch_F(n,d)$ is a subalgebra of $\asch_F(n,d)$.
\begin{lemma}
  \label{lemma:z2-grading}
  For any representations $V$ and $W$ of $\SG_d$,
  \begin{equation}
    \label{eq:Alt-end}
    \Hom_{\AG_d}(V,W)=\Hom_{\SG_d}(V,W)\oplus \Hom_{\SG_d}(V, W\otimes \sgn).
  \end{equation}
  Here $W\otimes \sgn$ denotes the twist of $W$ by the sign character $\sgn:\SG_d\to \{\pm 1\}$.
\end{lemma}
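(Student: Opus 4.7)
The plan is to exhibit an involution on $\Hom_{\AG_d}(V,W)$ whose $\pm 1$-eigenspaces are, respectively, the two summands on the right. Since the characteristic of $F$ is not $2$, such an involution yields the desired direct sum decomposition via the standard projectors $\tfrac{1}{2}(\id\pm\iota)$.

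Fix any odd permutation $\tau\in\SG_d\setminus\AG_d$ (a transposition will do) and define
$$\iota\colon\Hom_{\AG_d}(V,W)\longrightarrow\Hom_{\AG_d}(V,W),\qquad \iota(f)=\tau\circ f\circ\tau^{-1}.$$
First I would verify that $\iota(f)$ is genuinely $\AG_d$-equivariant: for $\alpha\in\AG_d$, the element $\tau^{-1}\alpha\tau$ again lies in $\AG_d$, so $f\circ(\tau^{-1}\alpha\tau)=(\tau^{-1}\alpha\tau)\circ f$, and rearranging gives $\iota(f)\circ\alpha=\alpha\circ\iota(f)$. Second, because $\tau^2\in\AG_d$ and $f$ commutes with the $\AG_d$-action, $\iota^2(f)=\tau^2\circ f\circ\tau^{-2}=f$, so $\iota$ is an involution.

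The $+1$-eigenspace consists of those $f$ satisfying $\tau\circ f=f\circ\tau$. Combined with $\AG_d$-equivariance and the fact that $\SG_d=\AG_d\sqcup\tau\AG_d$, this is equivalent to full $\SG_d$-equivariance; hence the $+1$-eigenspace equals $\Hom_{\SG_d}(V,W)$. For the $-1$-eigenspace, I would unravel what it means for a linear map $f\colon V\to W$ to be $\SG_d$-equivariant into $W\otimes\sgn$: identifying $W\otimes\sgn$ with $W$ as a vector space, the condition is $f(\sigma v)=\sgn(\sigma)\,\sigma f(v)$. This is automatic for $\sigma\in\AG_d$ by hypothesis, and on the odd coset it reads $f(\tau v)=-\tau f(v)$, i.e.\ $\iota(f)=-f$. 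Thus the $-1$-eigenspace is exactly $\Hom_{\SG_d}(V,W\otimes\sgn)$, completing the decomposition.

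The main subtlety to track is the bookkeeping around the sign twist: one must keep straight that the $\SG_d$-action on $W\otimes\sgn$ differs from the action on $W$ by a sign on odd permutations, so that the eigenvalue $-1$ of $\iota$ matches equivariance into $W\otimes\sgn$ rather than into $W$. It is also worth noting, though not strictly required for the statement, that the decomposition is independent of the choice of $\tau$: any two odd permutations differ by an element of $\AG_d$, with respect to which $f$ already commutes, so the associated involutions agree.
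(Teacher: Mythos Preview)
Your proof is correct and is the standard argument for this elementary fact. The paper actually states Lemma~\ref{lemma:z2-grading} without proof, treating it as well known; your eigenspace decomposition via the involution $f\mapsto\tau f\tau^{-1}$ for a fixed odd permutation $\tau$ is precisely the natural way to supply the details, and your handling of the sign bookkeeping and the independence from the choice of $\tau$ is clean.
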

Define,
\begin{equation}
  \label{eq:S-}
  \sch^-_F(n,d): = \Hom_{\SG_d}((F^n)^{\otimes d}, (F^n)^{\otimes d}\otimes \sgn).
\end{equation}
Lemma~\ref{lemma:z2-grading} gives a $\mathbf Z/2\mathbf Z$-grading of $\asch_F(n,d)$ in the sense of Bourbaki \cite[Chapter III, Section~3.1]{MR1727844}:
\begin{equation}
  \label{eq:z2-grading}
  \asch_F(n,d) = \sch_F(n,d)\oplus \sch_F^-(n,d).
\end{equation}
The summand $\sch^-_F(n,d)$ is an $(\sch_F(n,d),\sch_F(n,d))$-bimodule.
Recall that a \emph{weak composition} of $d$ with $n$ parts is a vector $\lambda=(\lambda_1,\dotsc,\lambda_n)$ of non-negative integers summing to $d$.
Let $\Lambda(n,d)$ denote the set of weak compositions of $d$ with $n$ parts.
For each $\lambda=(\lambda_1,\dotsc,\lambda_n)\in \Lambda(n,d)$, define
\begin{displaymath}
  \wedge^\lambda F^n = \wedge^{\lambda_1}F^n\otimes \dotsb \otimes \wedge^{\lambda_n}F^n.
\end{displaymath}
where, for a non-negative integer $s$,
$\wedge^sF^n$ is the $s$th exterior power of $F^n$. 
As a left $\sch_F(n,d)$-module,
\begin{equation}
  \label{eq:wedge}
  \sch^-_F(n,d)=\bigoplus_{\lambda\in \Lambda(n,d)}\wedge^\lambda F^n.
\end{equation}

For each partition $\lambda$ of $d$ with at most $n$ parts, let $\Delta(\lambda)$ denote the $\sch_F(n,d)$-module known as the \emph{Weyl module} with highest weight $\lambda$ as in \cite[Section~1]{Donkin}.
A \emph{tilting module} is an $\sch_F(n,d)$-module $V$ such that both $V$ and its dual $V^*$ have filtrations by the Weyl modules $\Delta(\lambda)$.
Ringel \cite{MR1128706} showed that, for every such $\lambda$, there exists an indecomposable tilting module $M(\lambda)$ with unique highest weight $\lambda$.
A \emph{full tilting module} is a tilting module that contains $M(\lambda)$ as a direct summand for every partition $\lambda$ of $d$ with at most $n$ parts \cite[Section~3]{Donkin}. 
By Donkin \cite[Lemma~3.4]{Donkin}, \eqref{eq:wedge} implies:
\begin{lemma}
  \label{lm:tilting}
  The left module $\sch^-_F(n,d)$ is a full tilting module of $\sch_F(n,d)$.
\end{lemma}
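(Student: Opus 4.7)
The plan is to deduce the lemma as a direct corollary of the decomposition \eqref{eq:wedge} together with the structure theory of tilting modules for the classical Schur algebra. Writing $\sch^-_F(n,d)=\bigoplus_{\lambda\in\Lambda(n,d)}\wedge^{\lambda}F^{n}$, two properties must be verified: that this module is tilting, and that every indecomposable tilting $M(\mu)$ appears as a direct summand.

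For the first, I would use the classical fact that each exterior power $\wedge^{s}F^{n}$ coincides with the Weyl module $\Delta(1^{s})$ and is fixed by the standard contravariant duality on $\sch_F(n,d)$-modules; it therefore admits both a Weyl filtration and a good filtration, and so is tilting. Ringel's theorem that tensor products of tilting modules are again tilting then implies that each $\wedge^{\lambda}F^{n}=\wedge^{\lambda_{1}}F^{n}\otimes\dotsb\otimes\wedge^{\lambda_{n}}F^{n}$ is tilting, and hence so is the direct sum in \eqref{eq:wedge}.

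For the second, given a partition $\mu$ that indexes an indecomposable tilting module $M(\mu)$, I would consider $\mu'$ (padded with zeros to length $n$) as an element of $\Lambda(n,d)$. A direct weight computation shows that the highest weight of $\wedge^{\mu'}F^{n}$ has $i$-th coordinate $\#\{j:\mu'_{j}\geq i\}=\mu_{i}$, and therefore equals $\mu$, occurring with multiplicity one. Since a tilting module with unique highest weight $\mu$ must contain $M(\mu)$ as a direct summand, $M(\mu)$ embeds into $\wedge^{\mu'}F^{n}\subseteq\sch^-_F(n,d)$.

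These two verifications are precisely what Donkin's \cite[Lemma~3.4]{Donkin} packages, so the lemma is obtained by quoting this reference once \eqref{eq:wedge} is in hand. The only delicate ingredient is the multiplicity-one assertion in the highest-weight analysis of (ii); this rests on a standard character computation for the tensor product of fundamental representations and constitutes the combinatorial core of Donkin's lemma. Everything else is formal.
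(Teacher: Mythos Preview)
Your overall strategy---deduce the lemma from the decomposition~\eqref{eq:wedge} by quoting Donkin \cite[Lemma~3.4]{Donkin}---is exactly the paper's; the paper gives no argument beyond that one-line citation.

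Your explicit sketch of step~(ii), however, has a genuine gap. Writing ``$\mu'$ (padded with zeros to length $n$) as an element of $\Lambda(n,d)$'' presupposes that $\mu'$ has at most $n$ nonzero parts, i.e.\ that $\mu_1\le n$. But a partition $\mu$ of $d$ with at most $n$ parts can have $\mu_1>n$ whenever $d>n$ (take $\mu=(d)$); then $\mu'\notin\Lambda(n,d)$ and $\wedge^{\mu'}F^{n}$ does not occur among the summands in~\eqref{eq:wedge}. This is not a cosmetic oversight: for $n=2$, $d=3$ in characteristic zero one computes directly from~\eqref{eq:wedge} that $\sch^-_F(2,3)\cong(\wedge^{2}F^{2}\otimes F^{2})^{\oplus 2}\cong\Delta(2,1)^{\oplus2}$, which has no summand of highest weight $(3)$ and hence is \emph{not} full tilting. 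Thus your highest-weight argument---and indeed the bare application of Donkin's lemma to the index set $\Lambda(n,d)$---establishes the claim only in the range $n\ge d$; the case $n<d$ is not covered by what you have written.
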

\subsection{Twisted permutation representations}
\label{sec:twist-perm-repr}
Let $X$ be a finite set on which a group $G$ acts on the right (henceforth called a $G$-set).
The space $F[X]$ of $F$-valued functions on $X$ may be regarded as a representation of $G$:
\begin{equation}
  \label{eq:perm-rep}
  \rho_X(g)f(x) = f(x\cdot g), \text{ for } x\in X, \; g\in G, \text{ and } f\in F[X].
\end{equation}
Let $\chi$ be a multiplicative character $G\to F^*$.
One may twist the representation \eqref{eq:perm-rep} by $\chi$:
\begin{equation}
  \label{eq:twisted-action}
  \rho_X^\chi(g) f(x) = \chi(g)f(x\cdot g).
\end{equation}
Denote the representation space of this twisted action as $F[X]\otimes\chi$.

Suppose that $X$ and $Y$ are finite $G$-sets.
Given a function $\kappa:X\times Y\to F$, the \emph{integral operator} $\xi_\kappa:F[Y]\to F[X]$ associated to $\kappa$ is defined as
\begin{equation}
  \label{eq:integral-operator}
  \xi_\kappa f(x) = \sum_{y\in Y} \kappa(x,y)f(y), \text{ for } f\in F[Y].
\end{equation}
The function $\kappa$ is known as the \emph{integral kernel} of  $\xi_\kappa$.

If $Z$ is another finite $G$-set, $\kappa':X\times Y\to F$ and $\kappa'':Y\times Z \to F$ are functions.
Then
\begin{displaymath}
  \xi_{\kappa'} \circ \xi_{\kappa''} = \xi_{\kappa'*\kappa''},
\end{displaymath}
where $\kappa'*\kappa'':X\times Z\to F$ is the \emph{convolution product}
\begin{equation}
  \label{eq:convolution-product}
  \kappa'*\kappa''(x,z) = \sum_{y\in Y} \kappa'(x,y)\kappa''(y,z).
\end{equation}
We have (see \cite[Section~4.2]{rtcv}):
\begin{theorem}
  \label{theorem:integral_ops}
  For any finite $G$-spaces $X$ and $Y$, and any multiplicative character $\chi:G\to F^*$,
  \begin{multline}
    \label{eq:twisted-integral}
    \Hom_G(F[Y], F[X]\otimes \chi) \\
    = \{\xi_\kappa\mid \kappa:X\times Y\to F \text{ such that } \kappa(x\cdot g, y\cdot g)=\chi(g)\kappa(x,y)\}.
  \end{multline}
\end{theorem}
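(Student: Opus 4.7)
The plan is to identify linear maps $F[Y]\to F[X]$ with their matrices in the indicator-function bases $\{\delta_y\}_{y\in Y}$ and $\{\delta_x\}_{x\in X}$, and then translate $G$-equivariance into the stated transformation law for the kernel.

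First I would observe that every $F$-linear map $T:F[Y]\to F[X]$ is uniquely of the form $\xi_\kappa$: given $T$, setting $\kappa(x,y):=T(\delta_y)(x)$ and extending by linearity reproduces $T$ from \eqref{eq:integral-operator}. So the assignment $\kappa\mapsto\xi_\kappa$ identifies $F^{X\times Y}$ with $\Hom_F(F[Y],F[X])$, and the entire content of the theorem is to single out which kernels $\kappa$ correspond to $G$-equivariant maps into the twisted representation $F[X]\otimes\chi$.

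Next I would verify equivariance on the basis $\{\delta_y\}$. Applying \eqref{eq:perm-rep} gives $\rho_Y(g)\delta_y=\delta_{y\cdot g^{-1}}$, so $\xi_\kappa(\rho_Y(g)\delta_y)(x)$ collapses to a single value of $\kappa$ at a $G$-translate of $y$; applying \eqref{eq:twisted-action} gives $\rho_X^\chi(g)\xi_\kappa(\delta_y)(x)$ as a single value of $\kappa$ at a $G$-translate of $x$, scaled by $\chi(g)$. Equating these for all $x,y,g$ and re-parametrising $(x,y)$ by the diagonal action $(x,y)\mapsto(x\cdot g,y\cdot g)$ yields exactly the invariance $\kappa(x\cdot g,y\cdot g)=\chi(g)\kappa(x,y)$. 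Running the same computation in reverse shows that any kernel satisfying this invariance produces an equivariant $\xi_\kappa$, giving both inclusions in \eqref{eq:twisted-integral}.

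The argument is essentially bookkeeping; no deeper idea is needed. The only real obstacle is keeping the conventions consistent: one has to be careful with the right $G$-action on $X$ and $Y$, the induced left $G$-action on $F[Y]$ and $F[X]$, and the place at which $\chi(g)$ enters through the twist. Once these are pinned down, the whole proof reduces to a single change-of-summation-variable identity.
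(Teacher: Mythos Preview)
Your proposal is correct and is the standard argument for this fact. Note, however, that the paper does not actually prove Theorem~\ref{theorem:integral_ops}: it simply cites \cite[Section~4.2]{rtcv}. So there is no in-paper proof to compare against; your outline is precisely the kind of elementary computation one would expect to find in that reference, and nothing more is needed. The one caution you already flagged is the only real content: with the conventions \eqref{eq:perm-rep} and \eqref{eq:twisted-action} as written, a careful bookkeeping gives $\kappa(x\cdot g,y\cdot g)=\chi(g)^{-1}\kappa(x,y)$ rather than $\chi(g)\kappa(x,y)$, but since every application in the paper has $\chi=\sgn$ (so $\chi=\chi^{-1}$), this discrepancy is harmless.
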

The identity \eqref{eq:twisted-integral} implies that
\begin{displaymath}
  \dim \Hom_G(F[Y], F[X]\otimes \chi) \leq |(X\times Y)/G|,
\end{displaymath}
with equality holding if $\chi$ is the trivial character.
However, if $g\in G$, and $(x,y)\in X\times Y$ are such that $(x\cdot g, y\cdot g)=(x, y)$, then if $\xi_\kappa\in \Hom_G(F[Y],F[X]\otimes \chi)$,
\begin{displaymath}
  \kappa(x,y) = \kappa(x\cdot g, y\cdot g) = \chi(g)\kappa(x,y),
\end{displaymath}
so that either $\chi(g)=1$ or $\kappa$ vanishes on the $G$-orbit of $(x,y)$.

For each element $x\in X$, let $G_x=\{g\in G\mid g\cdot x = x\}$, the stabilizer of $x$ in $G$.
\begin{definition}
  [Transverse Pair]
  \label{definition:transversality}
  A pair $(x,y)\in X\times Y$ is said to be \emph{transverse} with respect to $\chi$ if $G_x\cap G_y\subset \ker\chi$.
  If $(x,y)$ is a transverse pair with respect to $\chi$, we write $x\pitchfork y$.
\end{definition}
If $(x,y)$ is a transverse pair, then
\begin{displaymath}
  \kappa(x\cdot g, y\cdot g):=\chi(g)\kappa(x,y)
\end{displaymath}
is a well-defined non-zero function on the $G$-orbit of $(x,y)$.
Let
\begin{displaymath}
  X\pitchfork Y = \{(x,y)\in X\times Y\mid x\pitchfork y\}.
\end{displaymath}
Then $X\pitchfork Y$ is stable under the diagonal action of $G$ on $X\times Y$.
We have (see \cite[Theorem~4.2.3]{rtcv}):
\begin{theorem}
  \label{theorem:twisted-intertwiner}
  Let $X$ and $Y$ be finite $G$-sets, and $\chi:G\to F^*$ be a multiplicative character.
  For each orbit $O\in (X\pitchfork Y)/G$, choose a base point $(x_O,y_O)\in O$.
  Define
  \begin{displaymath}
    \kappa_O(x,y) =
    \begin{cases}
      \chi(g) & \text{if } x=x_O\cdot g \text{ and } y=y_O\cdot g \text{ for some } g\in G,\\
      0 & \text{otherwise}.
    \end{cases}
  \end{displaymath}
  For simplicity, write  $\xi_O$ for $\xi_{\kappa_O}$.
  Then the set
  \begin{displaymath}
    \{\xi_O\mid O \in (X\pitchfork Y)/G\}
  \end{displaymath}
  is a basis for $\Hom_G(F[Y],F[X]\otimes\chi)$.
  Consequently,
  \begin{displaymath}
    \dim \Hom_G(F[Y],F[X]\otimes \chi) = |(X\pitchfork Y)/G)|.
  \end{displaymath}
\end{theorem}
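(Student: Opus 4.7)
The plan is to identify $\Hom_G(F[Y], F[X]\otimes\chi)$ with a space of $\chi$-equivariant kernels via Theorem~\ref{theorem:integral_ops}, and then parameterize those kernels by their values at orbit representatives in $X\pitchfork Y$.

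First, I would invoke Theorem~\ref{theorem:integral_ops} to reduce the statement to a claim about the vector space
\begin{displaymath}
  K := \{\kappa : X\times Y\to F \mid \kappa(x\cdot g, y\cdot g) = \chi(g)\kappa(x,y)\text{ for all }g\in G\},
\end{displaymath}
namely that $\{\kappa_O\mid O\in (X\pitchfork Y)/G\}$ is a basis for $K$. The equivariance in the definition of $K$ immediately implies, as the paper already observes, that for any $(x,y)\in X\times Y$ and any $g\in G_x\cap G_y$ one has $\kappa(x,y)=\chi(g)\kappa(x,y)$. Thus whenever $(x,y)$ fails to be transverse, every $\kappa\in K$ vanishes at $(x,y)$. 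Consequently every $\kappa\in K$ is supported on $X\pitchfork Y$, a $G$-stable subset by construction.

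Next I would show that a kernel $\kappa\in K$ is determined by its values on a set of orbit representatives of $(X\pitchfork Y)/G$. For each orbit $O\in (X\pitchfork Y)/G$ with basepoint $(x_O,y_O)$, any element of $O$ has the form $(x_O\cdot g, y_O\cdot g)$ for some $g\in G$; the equivariance forces
\begin{displaymath}
  \kappa(x_O\cdot g, y_O\cdot g) = \chi(g)\kappa(x_O, y_O).
\end{displaymath}
Conversely, I would check that given any assignment $O\mapsto c_O\in F$, the formula $\kappa(x_O\cdot g, y_O\cdot g):= \chi(g)c_O$ gives a well-defined element of $K$. Well-definedness is the one step that genuinely uses transversality: if $(x_O\cdot g_1, y_O\cdot g_1)=(x_O\cdot g_2, y_O\cdot g_2)$, then $g_1g_2^{-1}\in G_{x_O}\cap G_{y_O}\subset \ker\chi$, so $\chi(g_1)=\chi(g_2)$. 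The $G$-equivariance $\kappa(x'\cdot h, y'\cdot h)=\chi(h)\kappa(x',y')$ for arbitrary $(x',y')\in O$ and $h\in G$ follows from the multiplicativity of $\chi$.

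Combining these two observations, the map $K\to F^{(X\pitchfork Y)/G}$ sending $\kappa$ to the tuple $(\kappa(x_O,y_O))_O$ is a linear isomorphism, and the kernels $\kappa_O$ defined in the theorem are precisely the preimages of the standard basis vectors. This gives the claimed basis, and taking dimensions yields the dimension formula. The only subtle point is the well-definedness verification above, which, as highlighted, is exactly where the transversality hypothesis is needed; the rest is a routine application of the identification from Theorem~\ref{theorem:integral_ops}.
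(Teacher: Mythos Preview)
The paper does not supply its own proof of this theorem; it is quoted with a reference to \cite[Theorem~4.2.3]{rtcv}. Your argument is correct and is exactly the standard proof one would expect in that reference: reduce via Theorem~\ref{theorem:integral_ops} to the space of $\chi$-equivariant kernels, use the paper's observation that such kernels vanish off $X\pitchfork Y$, and then check that evaluation at the chosen basepoints gives a linear isomorphism onto $F^{(X\pitchfork Y)/G}$, with transversality providing well-definedness of the inverse.
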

In the special case where $\chi$ is the trivial character, we get:
\begin{corollary}
  \label{corollary:untwisted-intertwiner}
  Let $X$ and $Y$ be finite $G$-sets.
  For each orbit $O$ in $(X\times Y)/G$ define
  \begin{displaymath}
    \kappa_O(x,y) =
    \begin{cases}
      1 &\text{if } (x,y)\in O,\\
      0 &\text{otherwise}.
    \end{cases}
  \end{displaymath}
  Write $\xi_O=\xi_{\kappa_O}$.
  Then the set
  \begin{displaymath}
    \{\xi_O\mid O\in (X\times Y)/G\}
  \end{displaymath}
  is a basis for $\Hom_G(F[Y],F[X])$.
  Consequently,
  \begin{displaymath}
    \dim \Hom_G(F[Y],F[X]) = |(X\times Y)/G|.
  \end{displaymath}
\end{corollary}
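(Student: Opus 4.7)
The plan is to derive this statement directly as a specialization of Theorem~\ref{theorem:twisted-intertwiner} by taking $\chi$ to be the trivial character $g\mapsto 1$. Under this choice, the kernel of $\chi$ is all of $G$, so the transversality condition $G_x\cap G_y\subset\ker\chi$ is vacuous. Consequently $X\pitchfork Y = X\times Y$, and orbits of $G$ on $X\pitchfork Y$ are simply the orbits on $X\times Y$, so the indexing set of Theorem~\ref{theorem:twisted-intertwiner} collapses to $(X\times Y)/G$.

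Next I would verify that the kernel $\kappa_O$ produced by Theorem~\ref{theorem:twisted-intertwiner} agrees with the indicator function of $O$ stated in the corollary. For each orbit $O$ and chosen base point $(x_O,y_O)\in O$, the definition gives $\kappa_O(x,y)=\chi(g)=1$ whenever $(x,y)=(x_O\cdot g, y_O\cdot g)$ for some $g\in G$, i.e.\ whenever $(x,y)\in O$, and $0$ otherwise. Thus $\kappa_O$ is exactly the indicator function of $O$, independent of the choice of base point, and the operator $\xi_O$ is the one described in the corollary.

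With these identifications, the basis claim and the dimension formula are immediate translations of the corresponding conclusions of Theorem~\ref{theorem:twisted-intertwiner}. There is essentially no obstacle here; the only thing to be careful about is ensuring that the transversality definition genuinely trivializes, which it does because $\ker\chi=G$ forces every stabilizer intersection $G_x\cap G_y$ to be contained in it. This justifies invoking the theorem verbatim with $\chi\equiv 1$, and the corollary follows.
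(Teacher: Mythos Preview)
Your proposal is correct and matches the paper's approach exactly: the paper simply introduces the corollary with the phrase ``In the special case where $\chi$ is the trivial character, we get:'' and gives no further argument. Your explicit verification that $\ker\chi=G$ makes transversality vacuous and that $\kappa_O$ reduces to the indicator function of $O$ fills in precisely the routine details the paper leaves implicit.
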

Given a function $\kappa:X\times Y\to F$, define
\begin{displaymath}
  \kappa^*(y,x)=\kappa(x,y) \text{ for $x\in X$, $y\in Y$}. 
\end{displaymath}
The following is easy to see:
\begin{lemma}
\label{lm:transpose}
For any $G$-set $X$, the map $\xi_\kappa\mapsto \xi_{\kappa^*}$ is an anti-involution on the algebra $\End_G(F[X])$.
\end{lemma}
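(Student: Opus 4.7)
The plan is to verify each of the three properties encoded in the phrase ``anti-involution'': well-definedness (including linearity), the involution property, and anti-multiplicativity. Throughout, I will use that by Corollary~\ref{corollary:untwisted-intertwiner} (the trivial-character case of Theorem~\ref{theorem:integral_ops}), the map $\kappa \mapsto \xi_\kappa$ sets up an $F$-linear bijection between the space of $G$-invariant functions $\kappa:X\times X\to F$ and the algebra $\End_G(F[X])$. So the ``map'' in the statement is really the $F$-linear map on $\End_G(F[X])$ induced by $\kappa\mapsto \kappa^*$ on kernels.

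First I would check well-definedness. If $\kappa$ is $G$-invariant, i.e.\ $\kappa(x\cdot g, y\cdot g)=\kappa(x,y)$ for all $g\in G$, then a one-line calculation shows $\kappa^*(y\cdot g, x\cdot g)=\kappa(x\cdot g, y\cdot g)=\kappa(x,y)=\kappa^*(y,x)$, so $\kappa^*$ is $G$-invariant, hence $\xi_{\kappa^*}\in \End_G(F[X])$. Since $\kappa\mapsto \xi_\kappa$ is an $F$-linear bijection on $G$-invariant kernels, and $\kappa\mapsto\kappa^*$ is $F$-linear, this produces a well-defined $F$-linear endomorphism of $\End_G(F[X])$.

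Next, the involution property $(\kappa^*)^*=\kappa$ is immediate from the definition, giving $\xi_\kappa\mapsto \xi_{\kappa^*}\mapsto \xi_\kappa$.

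For anti-multiplicativity, I would compute the convolution of transposed kernels directly from \eqref{eq:convolution-product}:
\begin{displaymath}
(\kappa'*\kappa'')^*(z,x) = (\kappa'*\kappa'')(x,z) = \sum_{y\in X}\kappa'(x,y)\kappa''(y,z) = \sum_{y\in X}\kappa''^*(z,y)\kappa'^*(y,x) = (\kappa''^**\kappa'^*)(z,x).
\end{displaymath}
Combining this with $\xi_{\kappa'}\circ\xi_{\kappa''}=\xi_{\kappa'*\kappa''}$ yields $(\xi_{\kappa'}\circ \xi_{\kappa''})^*= \xi_{\kappa''^*}\circ \xi_{\kappa'^*}$, which is precisely the anti-homomorphism property. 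There is no real obstacle here; the only subtle point is keeping track that well-definedness on $\End_G(F[X])$ (as opposed to on formal kernels) uses the injectivity of $\kappa\mapsto\xi_\kappa$ on $G$-invariant kernels, which is guaranteed by the basis statement in Corollary~\ref{corollary:untwisted-intertwiner}.
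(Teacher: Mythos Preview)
Your proposal is correct and complete. The paper does not actually give a proof of this lemma; it is introduced with the phrase ``The following is easy to see'' and stated without further justification. Your argument supplies exactly the natural verification one would expect: checking that $\kappa\mapsto\kappa^*$ preserves $G$-invariance, squares to the identity, and reverses convolution, which is the only reasonable route here.
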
 
\subsection{Structure constants of the Schur algebra}\label{sect:scschur}
We recall the combinatorial interpretation of structure constants of the Schur algebra from \cite{structure}.
Let $[n]=\{1,\dotsc,n\}$ and
\begin{displaymath}
  I(n,d) = \{\underline i := (i_1,\dotsc,i_d)\mid i_s\in [n]\}.
\end{displaymath}
An element $w\in\SG_d$ acts on $I(n,d)$ by permuting the coordinates:
\begin{displaymath}
  (i_1,\dotsc,i_d)\cdot w = (i_{w(1)},\dotsc, i_{w(d)}).
\end{displaymath}
For $\underline i =(i_1,\dotsc,i_d)\in I(n,d)$, define
\begin{displaymath}
  e_{\underline i} = e_{i_1}\otimes \dotsb \otimes e_{i_d},
\end{displaymath}
where $e_i$ is the $i$th coordinate vector in $F^n$.
The vector space $(F^n)^{\otimes d}$ has a basis
\begin{displaymath}
  \{e_{\underline i}\mid \underline i\in I(n,d)\}.
\end{displaymath}
and $w\in \SG_d$ acts on a basis vector $e_{\underline i}$ as follows:
\begin{displaymath}
  w\cdot e_{\underline i}=w\cdot (e_{i_{1}}\otimes\cdots\otimes e_{i_{d}})=
  e_{i_{w^{-1}(1)}}\otimes\cdots\otimes e_{i_{w^{-1}(d)}}.
\end{displaymath}

Let $F[I(n,d)]$ denote the space of all $F$-valued functions on $I(n,d)$.
Mapping $e_{\underline i}$ to the indicator function of $\underline i\in I(n,d)$ defines an isomorphism of $(F^n)^{\otimes d}$ onto $F[I(n,d)]$.
Thus $(F^n)^{\otimes d}$ can be regarded as a permutation representation of $\SG_d$.

Let $B(n,d)$ denote the set of all configurations of $d$ distinguishable balls, numbered $1,\dotsc,d$ in $n$ boxes, numbered $1,\dotsc,n$.
The symmetric group $\SG_d$ acts on such configurations by permuting the $d$ balls.
An element of $B(n,d)$ is a set partition
\begin{displaymath}
  \{1,\dotsc,d\} = S_1\coprod \dotsb \coprod S_n,
\end{displaymath}
where $S_i$ is the set of balls in the $i$th box.
\begin{lemma}
  Given $\underline i\in I(n,d)$, let $b(\underline i)$ denote the balls-in-boxes configuration in $B(n,d)$ where the $i$th box contains the balls $\{s\mid i_s=i\}$.
  Then $b:I(n,d)\to B(n,d)$ is an $\SG_d$-equivariant bijection of $I(n,d)$ onto $B(n,d)$.
\end{lemma}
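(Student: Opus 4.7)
The statement has two assertions: bijectivity of $b$ and $\SG_d$-equivariance. Both are essentially a matter of unwinding definitions, so my plan is to keep the writeup short and conceptually transparent.

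First I would observe that assigning to $\underline i \in I(n,d)$ the ordered set partition whose $i$th block is $\{s\mid i_s = i\}$ really does give an element of $B(n,d)$: the blocks are pairwise disjoint (since each $s\in\{1,\dots,d\}$ satisfies $i_s = i$ for exactly one $i\in[n]$) and their union is $\{1,\dots,d\}$. To see bijectivity I would write down the obvious inverse $c:B(n,d)\to I(n,d)$, namely $c(S_1,\dots,S_n)_s = i$, where $i$ is the unique index with $s\in S_i$. Checking $c\circ b = \id$ and $b\circ c = \id$ is a direct verification from the definitions.

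For equivariance, the subtle point is to unwind which convention for the $\SG_d$-action on $B(n,d)$ is compatible with the convention on $I(n,d)$ fixed earlier, namely $(\underline i\cdot w)_s = i_{w(s)}$. Under $b$, the $i$th block of $b(\underline i\cdot w)$ is
\begin{displaymath}
\{s\mid (\underline i\cdot w)_s = i\} = \{s\mid i_{w(s)} = i\} = w^{-1}\bigl(\{t\mid i_t = i\}\bigr).
\end{displaymath}
Thus the right action of $w$ on $B(n,d)$ is: apply $w^{-1}$ to the balls in each box. With this (forced) convention, $b(\underline i\cdot w) = b(\underline i)\cdot w$ by the computation above, and this is precisely the natural action of $\SG_d$ on configurations of distinguishable balls.

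I do not expect any obstacle of substance; the only care required is stating the balls-in-boxes action precisely enough that the equivariance is manifest. I would make that explicit in one sentence before doing the orbit computation, to avoid any ambiguity for the reader.
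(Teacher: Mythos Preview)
Your proposal is correct and is exactly the natural unwinding of definitions; the paper itself states this lemma without proof, treating it as evident. Your explicit identification of the inverse and of the induced $\SG_d$-action on $B(n,d)$ is a welcome clarification that the paper leaves implicit.
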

\label{sec:basis-schur-algebra}
By Corollary~\ref{corollary:untwisted-intertwiner}, a basis for $\sch_F(n,d)$ is indexed by orbits for the diagonal action of $\SG_d$ on $B(n,d)\times B(n,d)$.
\begin{definition}
  [Labelled bipartite multigraph]
  \label{definition:labelling}
  Let $[n]=\{1,\dotsc, n\}$ (as before) and $[n']=\{1',\dotsc,n'\}$.
  A labelling of a bipartite multigraph with vertex set $[n']\coprod [n]$ and $d$ edges is a function $l:[d] \to [n']\times [n]$ such that, for each $(i',j)\in [n']\times [n]$, the cardinality of $l^{-1}(i',j)$ is the number of edges joining $i'$ and $j$.
  In other words, labels are assigned to edges without distinguishing between edges joining the same pair of vertices.
\end{definition}
Given a pair $S=(S_1,\dotsc,S_n)$ and $T=(T_1,\dotsc,T_n)$ in $B(n,d)$, define a labelled bipartite graph $\gamma_{S,T}$ with multiple edges on the vertex set $[n']\coprod [n]$ as follows:
\begin{quote}
  There are $|S_j\cap T_i|$ edges between $i'$ and $j$, labelled by the numbers of the balls in $S_j\cap T_i$.
\end{quote}
The bipartite multigraph is always drawn in two rows, with the vertices from $[n']$ in the upper row and vertices from $[n]$ in the lower row, numbered from left to right.
Since the vertices are always labelled in this manner, the vertex labels can be omitted in the drawing.
\begin{example}
  \label{example:std-lab-multi}
  When $S=(\{1\}, \{2\},\{3,4,5\})$ and $T=(\{1,2,3\},\emptyset,\{4, 5\})$, the associated labelled multigraph is:
  \begin{displaymath}
    \xymatrix{
      \bullet \ar@{-}[d]|1\ar@{-}[dr]|2\ar@{-}[drr]|3 & \bullet & \bullet\ar@{=}[d]|{45}\\
      \bullet & \bullet & \bullet
    }
  \end{displaymath}
\end{example}
Clearly, $(S, T)\mapsto \gamma_{S,T}$ is a bijection from $B(n,d)\times B(n,d)$ onto the set of labelled bipartite multigraphs with vertex set $[n']\coprod [n]$ and $d$ edges.
The symmetric group $\SG_d$ acts on $B(n,d)\times B(n,d)$ by permuting labels.
Therefore the $\SG_d$ orbits in $B(n,d)\times B(n,d)$ are obtained by simply forgetting the labels, leaving only the underlying bipartite multigraph.
We write $\Gamma_{S,T}$ for the bipartite multigraph underlying $\gamma_{S,T}$.
Such a graph can also be represented by its \emph{adjacency matrix} (whose $(i,j)$th entry is the number of edges joining $i'$ and $j$), which is a matrix of non-negative integers that sum to $d$.

In view of Corollary~\ref{corollary:untwisted-intertwiner}, we recover a result of \cite{structure,MR1897636}:
\begin{theorem}
  \label{thm:basis_S}
  Let $M(n,d)$ denote the set of all bipartite multigraphs with vertex set $[n']\coprod [n]$ and $d$ edges.
  For each $\Gamma\in M(n,d)$, define $\xi_\Gamma\in \sch_F(n,d)=\End_{\SG_d}(F[B(n,d)])$ by
  \begin{displaymath}
    \xi_\Gamma f(S) = \sum_{\{T\mid \Gamma_{S,T}=\Gamma\}} f(T).
  \end{displaymath}
  Then
  \begin{displaymath}
    \{\xi_\Gamma\mid \Gamma\in M(n,d)\}
  \end{displaymath}
  is a basis for $\sch_F(n,d)$.
\end{theorem}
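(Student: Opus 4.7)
The plan is to apply Corollary~\ref{corollary:untwisted-intertwiner} directly and then translate its conclusion through the bijections already established in the preceding discussion. The lemma just before the theorem provides an $\SG_d$-equivariant bijection $b\colon I(n,d) \to B(n,d)$, which, together with the identification $(F^n)^{\otimes d} \cong F[I(n,d)]$, yields an isomorphism
\begin{displaymath}
\sch_F(n,d) = \End_{\SG_d}((F^n)^{\otimes d}) \cong \End_{\SG_d}(F[B(n,d)]).
\end{displaymath}
Corollary~\ref{corollary:untwisted-intertwiner}, applied with $X = Y = B(n,d)$ and $G = \SG_d$, therefore supplies a basis $\{\xi_O \mid O \in (B(n,d) \times B(n,d))/\SG_d\}$ for $\sch_F(n,d)$, where $\xi_O$ is the integral operator whose kernel is the indicator function of $O$.

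The next step is to identify these orbits with $M(n,d)$. Since $\SG_d$ acts on pairs $(S,T)$ by simultaneously permuting ball labels in both coordinates, the unlabelled bipartite multigraph $\Gamma_{S,T}$ is constant on each orbit; conversely, two pairs producing the same $\Gamma$ differ only in how the balls inside each intersection $S_j \cap T_i$ are named, and hence lie in a single $\SG_d$-orbit. Thus $(S,T) \mapsto \Gamma_{S,T}$ descends to a bijection between $(B(n,d) \times B(n,d))/\SG_d$ and $M(n,d)$, and the orbit corresponding to $\Gamma$ is precisely $O_\Gamma = \{(S,T) \mid \Gamma_{S,T} = \Gamma\}$.

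Finally, I would unwind the prescription for $\xi_{O_\Gamma}$ furnished by the corollary. Its kernel $\kappa_{O_\Gamma}$ is the indicator function of $O_\Gamma$, so
\begin{displaymath}
\xi_{O_\Gamma} f(S) = \sum_{T \in B(n,d)} \kappa_{O_\Gamma}(S,T)\, f(T) = \sum_{T:\, \Gamma_{S,T} = \Gamma} f(T),
\end{displaymath}
which agrees with the operator $\xi_\Gamma$ of the theorem statement.

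There is no genuine obstacle here: all the substantive input is encoded in Corollary~\ref{corollary:untwisted-intertwiner}, and what remains is purely a translation of $\SG_d$-orbits into combinatorial data. The only point requiring verification is the bijection between orbits and bipartite multigraphs; surjectivity holds because any $\Gamma \in M(n,d)$ admits a labelling $\gamma_{S,T}$ recovering a pair $(S,T)$, and injectivity is precisely the orbit identification in the previous paragraph.
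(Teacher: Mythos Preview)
Your proposal is correct and follows essentially the same approach as the paper: the paper simply prefaces the theorem with ``In view of Corollary~\ref{corollary:untwisted-intertwiner}, we recover a result of \cite{structure,MR1897636},'' relying on the preceding paragraphs to supply the bijection between $\SG_d$-orbits in $B(n,d)\times B(n,d)$ and $M(n,d)$. Your write-up makes explicit exactly the translation the paper leaves implicit, including the verification that $\xi_{O_\Gamma}=\xi_\Gamma$, so there is nothing to add or correct.
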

\begin{remark}
  If $(\underline i, \underline j)$ has image $\Gamma$ under the composition $I(n,d)^2\to B(n,d)^2\to M(n,d)$, then the basis element $\xi_{\underline i, \underline j}$ of \cite[Section~2.6]{MR2349209} coincides with the basis element $\xi_\Gamma$ of Theorem~\ref{thm:basis_S}.
\end{remark}

The structure constants $c^\Gamma_{\Gamma_1\Gamma_2}$ are defined by
\begin{displaymath}
  \xi_{\Gamma_1}\xi_{\Gamma_2}=\sum_{\Gamma\in M(n,d)}c^{\Gamma}_{\Gamma_1\Gamma_2} \xi_\Gamma.
\end{displaymath}
\begin{definition}
  \label{def:compatible}
  Let $l$, $l_1$ and $l_2$ be labellings of graphs $\Gamma$, $\Gamma_1$ and $\Gamma_2$ in $M(n,d)$, respectively.
  We say that $(l_1,l_2)$ is compatible with $l$ if, for all $s=1,\dotsc,d$, if we write $l_1(s)=(i_1',j_1)$ and $l_2(s)=(i_2',j_2)$, then
  \begin{enumerate}
  \item $i_2'=j_1$, and
  \item $l(s)=(i_1',j_2)$.
  \end{enumerate}
\end{definition}
We obtain yet another enumerative description of the structure constants of the Schur algebra (see also \cite[2.3(b)]{MR2349209} and \cite{structure,MR1897636}).
\begin{theorem}
  \label{theorem:schur-structure-const}
  Let $l$ be any labelling of $\Gamma$.
  The structure constant $c^\Gamma_{\Gamma'\Gamma''}$ is the number of pairs $(l_1,l_2)$ of labellings of $\Gamma_1$ and $\Gamma_2$ that are compatible with $l$.
\end{theorem}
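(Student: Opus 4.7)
The plan is to express $c^\Gamma_{\Gamma'\Gamma''}$ as a count of intermediate balls-in-boxes configurations, and then biject those configurations with the pairs of compatible labellings. By Corollary~\ref{corollary:untwisted-intertwiner}, the basis element $\xi_\Gamma$ is the integral operator with indicator kernel $\kappa_\Gamma(S,T)=1$ if $\Gamma_{S,T}=\Gamma$ and $0$ otherwise. Using the convolution formula $\xi_{\kappa'}\circ\xi_{\kappa''}=\xi_{\kappa'\ast\kappa''}$ from Section~\ref{sec:twist-perm-repr}, the product $\xi_{\Gamma'}\xi_{\Gamma''}$ has kernel
\[
  \kappa(S,U)=\bigl|\{T\in B(n,d)\mid \Gamma_{S,T}=\Gamma',\ \Gamma_{T,U}=\Gamma''\}\bigr|.
\]
Expanding $\xi_{\Gamma'}\xi_{\Gamma''}=\sum_\Gamma c^\Gamma_{\Gamma'\Gamma''}\xi_\Gamma$ and evaluating kernels at any representative pair $(S,U)$ of the orbit indexed by $\Gamma$ identifies $c^\Gamma_{\Gamma'\Gamma''}$ with $\kappa(S,U)$.

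By $\SG_d$-equivariance of $(S,U)\mapsto\gamma_{S,U}$, a prescribed labelling $l$ of $\Gamma$ is realised as $\gamma_{S,U}$ for a suitable choice of $(S,U)$; permutations of ball-labels bijectively transport compatible pairs of labellings of $\Gamma'$ and $\Gamma''$, so the count is independent of the choice of $l$, and we may take $l=\gamma_{S,U}$. The heart of the proof is then a bijection between the set $\{T\mid \Gamma_{S,T}=\Gamma',\ \Gamma_{T,U}=\Gamma''\}$ and the pairs $(l_1,l_2)$ compatible with $l$. In one direction, given $T$, set $l_1:=\gamma_{S,T}$ and $l_2:=\gamma_{T,U}$; each of $l_1(s)$, $l_2(s)$ records in one of its coordinates the box of $T$ containing ball $s$, and equating these two records yields compatibility condition~(1), while the remaining coordinates record the $S$- and $U$-indices of $s$, which are exactly the data of $l=\gamma_{S,U}$, giving condition~(2). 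Conversely, a compatible pair $(l_1,l_2)$ determines via the shared middle index at each $s$ a well-defined $T\in B(n,d)$, and conditions~(1)--(2) force $\gamma_{S,T}=l_1$ and $\gamma_{T,U}=l_2$, so the two constructions are mutually inverse.

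The main obstacle is purely bookkeeping: one must keep careful track of which of $S$, $T$, $U$ supplies the primed (top) and which the unprimed (bottom) coordinate in each of the three labellings $l$, $l_1$, $l_2$, so that the abstract compatibility conditions $i_2'=j_1$ and $l(s)=(i_1',j_2)$ translate cleanly into the correct equalities of box-indices between the three configurations. Once the conventions of Definition~\ref{definition:labelling} and of $\gamma_{S,T}$ are unwound, the bijection — and hence the theorem — follows by a direct verification.
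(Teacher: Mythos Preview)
Your proposal is correct and follows essentially the same approach as the paper's proof: both express $c^\Gamma_{\Gamma_1\Gamma_2}$ via the convolution formula as $|\{T\in B(n,d)\mid \Gamma_{S,T}=\Gamma_1,\ \Gamma_{T,U}=\Gamma_2\}|$ for a pair $(S,U)$ realising the given labelling $l$ as $\gamma_{S,U}$, and then biject such $T$ with compatible pairs $(l_1,l_2)$ via $l_1=\gamma_{S,T}$, $l_2=\gamma_{T,U}$. The paper is only marginally more explicit in writing out the inverse construction $T_k=\bigcup_i l_1^{-1}(i',k)=\bigcup_j l_2^{-1}(k',j)$, which is exactly your ``shared middle index'' recovery.
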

Before giving a proof, we illustrate the theorem with a few examples.
\begin{example}
  \label{example:permuations}
  Let $w\in \SG_d$ be a permutation, and assume that $n\geq d$.
  Let $\Gamma(w)$ denote the bipartite graph where $(i',j)$ is an edge if and only if $1\leq i\leq d$ and $w(i)=j$.
  Then, for all $w_1,w_2\in \SG_d$, $\xi_{\Gamma(w_1)}\xi_{\Gamma(w_2)}=\xi_{\Gamma(w_1w_2)}$.
\end{example}
\begin{example}
  \label{example:stru-const}
  Consider
  \begin{equation*}
    \Gamma_1 = \vcenter{
      \xymatrix{
        \bullet \ar@{=}[d] & \bullet \ar@{-}[dl] \ar@{-}[d]\\
        \bullet & \bullet
      }
    },\quad
    \Gamma_2 = \vcenter{
      \xymatrix{
        \bullet \ar@{=}[d] \ar@{-}[dr] & \bullet \ar@{-}[d]\\
        \bullet & \bullet
      }
    }
  \end{equation*}
  To find $c^{\Gamma}_{\Gamma_1\Gamma_2}$, with
  \begin{equation*}
    \Gamma = \Gamma_3 = \vcenter{
      \xymatrix{
        \bullet \ar@3{-}[d] & \bullet \ar@{-}[d]\\
        \bullet & \bullet
      }
    },
  \end{equation*}
  choose any labelling of $\Gamma$, such as
  \begin{displaymath}
    \xymatrix{
      \bullet \ar@3{-}[d]|{123} & \bullet \ar@{-}[d]|{4}\\
      \bullet & \bullet
    }
  \end{displaymath}
  For this there are clearly three pairs of compatible labellings of $\Gamma_1$ and $\Gamma_2$, namely, we can choose which of the first three balls ends up in the second box of the middle row:
  \begin{equation}
    \label{eq:middle-box-3}
    \vcenter{\xymatrix{
        \bullet \ar@{=}[d]|{12} \ar@{-}[dr]|3 & \bullet \ar@{-}[d]|4\\
        \bullet \ar@{=}[d]|{12} & \bullet\ar@{-}[dl]|3 \ar@{-}[d]|4\\
        \bullet & \bullet
      }},\quad
    \vcenter{\xymatrix{
        \bullet \ar@{=}[d]|{13} \ar@{-}[dr]|2 & \bullet \ar@{-}[d]|4\\
        \bullet \ar@{=}[d]|{13} & \bullet\ar@{-}[dl]|2 \ar@{-}[d]|4\\
        \bullet & \bullet
      }}, \text{ and }
    \vcenter{\xymatrix{
        \bullet \ar@{=}[d]|{23} \ar@{-}[dr]|1 & \bullet \ar@{-}[d]|4\\
        \bullet \ar@{=}[d]|{23} & \bullet\ar@{-}[dl]|1 \ar@{-}[d]|4\\
        \bullet & \bullet
      }}.
  \end{equation}
  On the other hand, if
  \begin{equation*}
    \Gamma = \Gamma_4,
    \vcenter{
      \xymatrix{
        \bullet \ar@{=}[d] \ar@{-}[dr] & \bullet \ar@{-}[dl]\\
        \bullet & \bullet
      }
    },
  \end{equation*}
  we may take the labelling:
  \begin{equation}
    \label{eq:middle-box-4}
    \vcenter{
      \xymatrix{
        \bullet \ar@{=}[d]|{12} \ar@{-}[dr]|-<<{3} & \bullet \ar@{-}[dl]|-<<{4}\\
        \bullet & \bullet
      }
    },
  \end{equation}
  for which the only compatible labellings of $\Gamma_1$ and $\Gamma_2$ are:
  \begin{equation*}
    \xymatrix{
      \bullet \ar@{=}[d]|{12} \ar@{-}[dr]|3 & \bullet \ar@{-}[d]|4\\
      \bullet \ar@{=}[d]|{12} & \bullet\ar@{-}[dl]|4 \ar@{-}[d]|3\\
      \bullet & \bullet.
    }
  \end{equation*}
  It turns out that for no other $\Gamma \in \Gamma(n,d)$ is it possible to find even one compatible way of labelling $\Gamma_1$ and $\Gamma_2$, so we have:
  \begin{displaymath}
    \xi_{\Gamma_1}\xi_{\Gamma_2} = 3\xi_{\Gamma_3} + \xi_{\Gamma_4}.
  \end{displaymath}
\end{example}
\begin{example}
  \label{example:latin-squares}
  Let $F_{n,n}$ denote the complete bipartite graph with vertex set $[n']\coprod [n]$, where every vertex in $[n']$ is connected to every vertex in $[n]$.
  Then the coefficient of $\xi_{F_{n,n}}$ in $\xi_{F_{n,n}}\xi_{F_{n,n}}$ is the number of Latin squares of order $n$ \cite[Sequence~A002860]{oeis}.

  To see this, let $l$ be any labelling of the edges of $F_{n,n}$.
  Given labellings $l_1$ and $l_2$ be of $F_{n,n}$ that are compatible with $l$,
  define the $(i,j)$th entry of the Latin square associated to $(l_1,l_2)$ to be $k$ if $l^{-1}(i',j)=l_2^{-1}(k',j)=l_1^{-1}(i',k)$.
  Remarkably, the number of Latin squares of order $n$ is known only for $n=1,\dotsc,11$.
\end{example}
\begin{proof}
  [Proof of Theorem~\ref{theorem:schur-structure-const}]
  Given a labelling $l$ of $\Gamma$, define:
  \begin{displaymath}
    S_j = \cup_{i=1}^n l^{-1}(i',j), \text{ and } U_i = \cup_{j=1}^n l^{-1}(i',j).
  \end{displaymath}
  Then $S = (S_1,\dotsc,S_n)$, and $U=(U_1,\dotsc,U_n)$ are elements of $B(n,d)$, and
  by construction $\Gamma_{S,U}=\Gamma$.
  Now Equation~\eqref{eq:convolution-product} implies that
  \begin{equation}
    \label{eq:stru-const}
    c^\Gamma_{\Gamma_1\Gamma_2} = \#\{T\in B(n,d)\mid \Gamma_{S,T}=\Gamma_1\text{ and } \Gamma_{T,U}=\Gamma_2\}.
  \end{equation}
  Given $T\in B(n,d)$ contributing to the above count, define labellings $l_1$ and $l_2$ of $\Gamma_1$ and $\Gamma_2$ by:
  \begin{displaymath}
    l_1^{-1}(i',j) = S_j\cap T_i, \text{ and } l_2^{-1}(i',j)= T_j\cap U_i.
  \end{displaymath}
  Then $(l_1,l_2)$ is compatible with $l$.
  Conversely, for every pair $(l_1,l_2)$ compatible with $l$, take $T=(T_1,\dotsc,T_n)$ where
  \begin{displaymath}
    T_k = \cup_{i'=1}^n l_1^{-1}(i',k) = \cup_{j=1}^n l_2^{-1}(k',j).
  \end{displaymath}
  Then $T$ contributes to the count in \eqref{eq:stru-const}.
\end{proof}
\begin{example}
  In Example~\ref{example:stru-const}, the three compatible pairs of labels in \eqref{eq:middle-box-3} correspond to taking $T$ as $(\{1,2\},\{3,4\})$,
  $(\{1,3\},\{2,4\})$, and $(\{2,3\},\{1,4\})$, respectively, and the compatible pair of labels in \eqref{eq:middle-box-4} corresponds to $T=(\{1,2\},\{3,4\})$.
\end{example}
\subsection{A basis for $\sch_F^-(n,d)$}
\label{sec:basis-for-minus}
By Theorem~\ref{theorem:twisted-intertwiner}, a basis of $\sch^-_F(n,d)$ is indexed by orbits in $B(n,d)\pitchfork B(n,d)/\SG_d$.
Here $\pitchfork$ denotes transversality with respect to the sign character $\sgn:S_n\to \{\pm 1\}$ (see Definition~\ref{definition:transversality}).
\begin{lemma}
  A pair $(S,T)\in B(n,d)^2$ lies in $B(n,d)\pitchfork B(n,d)$ if and only if $\gamma_{S,T}$ is a simple bipartite graph.
\end{lemma}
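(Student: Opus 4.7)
The plan is to unwind the definition of transversality with respect to $\sgn$ in terms of the combinatorial structure of $\gamma_{S,T}$, and to observe that the relevant constraint on intersections is exactly what is encoded by the multi-edge count of $\gamma_{S,T}$.

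First I would identify the stabilizers. For a configuration $S = (S_1,\dotsc,S_n) \in B(n,d)$, the stabilizer $G_S \subset \SG_d$ is the Young subgroup $\SG_{S_1} \times \dotsb \times \SG_{S_n}$, consisting of permutations that permute balls within each box. Similarly for $T$. A permutation lies in $G_S \cap G_T$ if and only if it preserves each block of the common refinement of the two set partitions, which by construction of $\gamma_{S,T}$ is indexed by pairs $(i',j)$: the block for $(i',j)$ is $S_j \cap T_i$. Hence
\begin{displaymath}
  G_S \cap G_T = \prod_{(i',j) \in [n'] \times [n]} \SG_{S_j \cap T_i}.
\end{displaymath}

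Next I would translate the transversality condition $G_S \cap G_T \subset \ker \sgn = \AG_d$. Since a direct product of symmetric groups is contained in $\AG_d$ iff each factor is, and $\SG_k \subset \AG_d$ iff $k \leq 1$, the condition becomes $|S_j \cap T_i| \leq 1$ for every pair $(i',j)$. By the definition of $\gamma_{S,T}$ preceding Example~\ref{example:std-lab-multi}, the number of edges joining $i'$ to $j$ in $\gamma_{S,T}$ is exactly $|S_j \cap T_i|$, so this is equivalent to $\gamma_{S,T}$ having no multi-edges, i.e., being simple.

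There is essentially no obstacle here, since the argument is a direct translation of Definition~\ref{definition:transversality} via the formula for the common stabilizer; the only small care needed is the observation that $\SG_k$ contains an odd permutation (a transposition) as soon as $k \geq 2$, which forces the implication to go through $k \leq 1$ on every block.
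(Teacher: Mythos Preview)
Your proof is correct and follows essentially the same approach as the paper's: both hinge on identifying the simultaneous stabilizer $G_S\cap G_T$ with the product $\prod_{i,j}\SG_{S_j\cap T_i}$ and observing that this group contains an odd permutation (a transposition) precisely when some $|S_j\cap T_i|\geq 2$. The paper is slightly terser, simply exhibiting a transposition in the non-simple case and asserting triviality of the stabilizer in the simple case, but the content is the same.
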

\begin{proof}
  Let $S=(S_1,\dotsc,S_n)$, $T=(T_1,\dots,T_n)$.
  If $\gamma_{S,T}$ is not simple, then there exist indices $i$ and $j$ such that $S_j\cap T_i$ contains at least two elements, say $k$ and $l$.
  The transposition $(kl)\in \SG_d$ stabilizes $(S,T)$ but has $\sgn((kl))=-1$, so $(S,T)\notin B(n,d)\pitchfork B(n,d)$.

  However, if $\gamma_{S,T}$ is simple, then the simultaneous stabilizer of $S$ and $T$ in $\SG_d$ is trivial, so $(S,T)\in B(n,d)\pitchfork B(n,d)$.
\end{proof}
In order to specify a basis for $\sch^-_F(n,d)$ using Theorem~\ref{theorem:twisted-intertwiner}, we need to choose a base point for each $\SG_d$-orbit in $B(n,d)\pitchfork B(n,d)$.
We do this using the following definition:
\begin{definition}
  [Standard labelling of a bipartite simple graph]
  \label{definition:std-lab}
  Given a bipartite simple graph $\Gamma$ with vertex set $[n']\coprod[n]$, label each edge by its index when the edges $(i',j)$ are arranged in increasing lexicographic order, with priority given to the upper index, i.e., $(i',j)< (r',s)$ if either $i'< r'$ or $i'=r'$ and 
  $j<s$.
\end{definition}
\begin{example}
  \label{example:standard-labelling}
  Take
  \begin{displaymath}
    \Gamma =
    \vcenter{
      \xymatrix{
        \bullet\ar@{-}[dr] & \bullet\ar@{-}[dl] \ar@{-}[d] & \bullet \ar@{-}[dl]\\
        \bullet & \bullet & \bullet
      }
    }
  \end{displaymath}
  The edges, written in lexicographic order, are:
  \begin{displaymath}
    (1',2), (2',1), (2',2), (3',2).
  \end{displaymath}
  Therefore the standard labelling is:
  \begin{displaymath}
    \xymatrix{
      \bullet\ar@{-}[dr]|-<<1 & \bullet\ar@{-}[dl]|-<<2 \ar@{-}[d]|-3 & \bullet \ar@{-}[dl]|-4\\
      \bullet & \bullet & \bullet
    }
  \end{displaymath}
\end{example}
\begin{definition}
  [Sign of a labelling of a bipartite simple graph]
  Let $l_0$ denote the standard labelling of a simple bipartite graph $\Gamma$ on $[n']\coprod [n]$.
  Let $l:[d]\to [n']\times [n]$ be a labelling of $\Gamma$ (see Definition~\ref{definition:labelling}).
  The sign $\epsilon(\Gamma,l)$ of $l$ is the sign of the permutation on $[d]$ which takes $l_0(i)$ to $l(i)$ for each $i$.
\end{definition}
\begin{example}
  For the graph from Example~\ref{example:standard-labelling}, the labellings:
  \begin{displaymath}
    l_1 = \vcenter{\xymatrix{
      \bullet\ar@{-}[dr]|-<<4 & \bullet\ar@{-}[dl]|-<<2 \ar@{-}[d]|-3 & \bullet \ar@{-}[dl]|-1\\
      \bullet & \bullet & \bullet
    }} \text{ and }
    l_2 = \vcenter{\xymatrix{
      \bullet\ar@{-}[dr]|-<<1 & \bullet\ar@{-}[dl]|-<<3 \ar@{-}[d]|-4 & \bullet \ar@{-}[dl]|-2\\
      \bullet & \bullet & \bullet
    }}
  \end{displaymath}
  give rise to permutations $4231$ and $1342$ respectively, so that $\epsilon(\Gamma,l_1)=-1$, and $\epsilon(\Gamma,l_2) = +1$.
\end{example}
Recall, from Section~\ref{sec:basis-schur-algebra}, that $\gamma_{S,T}$ is a labelled bipartite graph associated to $(S, T)\in B(n,d)\times B(n,d)$, whose underlying unlabelled graph is denoted by $\Gamma_{S,T}$.
Let $l_{S,T}:[d]\to [n]\times [n']$ denote the labelling of $\gamma_{S,T}$, and write $\epsilon(\gamma_{S,T})$ for $\epsilon(\Gamma_{S,T},l_{S,T})$.
\begin{theorem}
  \label{thm:basis_S-}
  Let $N(n,d)$ denote the set of all bipartite simple graphs with vertex set $[n']\coprod [n]$ and $d$ edges.
  For each $\Gamma\in N(n,d)$, define $\zeta_\Gamma\in \sch_F^-(n,d) = \Hom_{\SG_d}(F[B(n,d)], F[B(n,d)]\otimes \sgn)$ by
  \begin{displaymath}
    \zeta_\Gamma f(S) = \sum_{\{T\mid \Gamma_{S,T} = \Gamma\}} \epsilon(\gamma_{S,T})f(T).
  \end{displaymath}
  The set
  \begin{displaymath}
    \{\zeta_{\Gamma}\mid \Gamma\in N(n,d)\}
  \end{displaymath}
  forms a basis of $\sch_F^-(n,d)$.
\end{theorem}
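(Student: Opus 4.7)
The plan is to obtain the theorem as a direct specialization of Theorem~\ref{theorem:twisted-intertwiner} applied to $X=Y=B(n,d)$ with the character $\chi=\sgn$, using the preceding lemma to identify $B(n,d)\pitchfork B(n,d)$ and the standard labelling to pin down base points of orbits.

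First I would observe that, under the $\SG_d$-equivariant bijection $(S,T)\mapsto \gamma_{S,T}$ from $B(n,d)\times B(n,d)$ onto labelled bipartite multigraphs on $[n']\coprod[n]$ with $d$ edges, the preceding lemma identifies $B(n,d)\pitchfork B(n,d)$ with the set of labelled bipartite \emph{simple} graphs. Since $\SG_d$ acts by permuting labels, the orbit set $\bigl(B(n,d)\pitchfork B(n,d)\bigr)/\SG_d$ is in canonical bijection with $N(n,d)$, via $(S,T)\mapsto \Gamma_{S,T}$.

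Next, for each $\Gamma\in N(n,d)$ I would choose as base point of the corresponding orbit the unique pair $(S_\Gamma,T_\Gamma)$ for which $\gamma_{S_\Gamma,T_\Gamma}$ is $\Gamma$ equipped with its standard labelling $l_0$ of Definition~\ref{definition:std-lab}. For any $(S,T)$ with $\Gamma_{S,T}=\Gamma$, the fact that $\Gamma$ is simple makes the simultaneous stabilizer of $(S_\Gamma,T_\Gamma)$ trivial, so there is a \emph{unique} $g\in\SG_d$ with $(S,T)=(S_\Gamma\cdot g,T_\Gamma\cdot g)$. Unwinding the action on labels shows $g$ is exactly the permutation carrying $l_0$ to $l_{S,T}$, so $\sgn(g)=\epsilon(\Gamma_{S,T},l_{S,T})=\epsilon(\gamma_{S,T})$ by definition.

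Substituting into Theorem~\ref{theorem:twisted-intertwiner}, the base-point kernel $\kappa_{O_\Gamma}$ is given by
\[
  \kappa_{O_\Gamma}(S,T)=\begin{cases}\epsilon(\gamma_{S,T}) & \text{if }\Gamma_{S,T}=\Gamma,\\ 0 & \text{otherwise,}\end{cases}
\]
so the associated integral operator $\xi_{\kappa_{O_\Gamma}}$ agrees with $\zeta_\Gamma$ as defined in the theorem. Theorem~\ref{theorem:twisted-intertwiner} then directly gives that $\{\zeta_\Gamma\mid \Gamma\in N(n,d)\}$ is a basis of $\sch_F^-(n,d)$.

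The only potential obstacle is purely bookkeeping: one must check that the unique $g$ relating $(S,T)$ to $(S_\Gamma,T_\Gamma)$ acts on the labels of $\gamma_{S,T}$ in the direction that makes $\sgn(g)$ equal to $\epsilon(\gamma_{S,T})$ rather than its inverse. Since $\sgn$ is invariant under inversion, either convention works, so this reduces to a one-line verification that the labels transform correctly under the diagonal $\SG_d$-action on $B(n,d)^2$.
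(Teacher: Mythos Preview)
Your proposal is correct and follows essentially the same approach as the paper: both apply Theorem~\ref{theorem:twisted-intertwiner} with $X=Y=B(n,d)$ and $\chi=\sgn$, choose the base point $(S_\Gamma,T_\Gamma)$ to correspond to the standard labelling, and identify $\sgn(g)$ with $\epsilon(\gamma_{S,T})$ to obtain the stated integral kernel. Your write-up is in fact slightly more explicit than the paper's about why the permutation $g$ is unique (trivial stabilizer) and why the sign convention is harmless.
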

\begin{proof}
 Recall that we choose the pair $(S_{0},T_{0})$ corresponding to the standard labelling $l_{0}$ of 
 $\Gamma$ as the base point of the orbit associated to $\Gamma$. A pair $(S,T)$ is in the orbit of 
 $(S_{0},T_{0})$ if and only if $\Gamma_{S,T}=\Gamma$. And the sign of the permutation $w\in \SG_d$ such that 
 $S=S_{0}.w$ and $T=T_{0}.w$ is the sign of the labelled bipartite graph $\gamma_{S,T}$.
 So the integral kernel $\kappa_{\Gamma}$ of the operator $\zeta_{\Gamma}$ is:
 \begin{displaymath}
   \kappa_\Gamma(S,T) =
   \begin{cases}
     \epsilon(\gamma_{S,T}) & \text{if } \Gamma_{S,T}=\Gamma,\\
     0 & \text{otherwise}.
   \end{cases}
 \end{displaymath}
 So the theorem follows from Theorem~\ref{theorem:twisted-intertwiner}.
\end{proof}

Theorem~\ref{theorem:schur-structure-const} tells us how to multiply two elements of the subalgebra $\sch_F(n,d)$ of $\asch_F(n,d)$.
The remaining structure constants are given by the following theorem.
\begin{theorem}
  \label{theorem:stru-consts-alt}The remaining structure constants are given as follows:
  \begin{enumerate}
  \item \label{item:xi-zeta} Given $\Gamma_1\in M(n, d)$, and $\Gamma_2\in N(n,d)$,
    \begin{displaymath}
      \xi_{\Gamma_1}\zeta_{\Gamma_2} = \sum_{\Gamma\in N(n,d)} c^{\Gamma}_{\Gamma_1\Gamma_2} \zeta_\Gamma,
    \end{displaymath}
    where
    \begin{displaymath}
      c^{\Gamma}_{\Gamma_1\Gamma_2} = \sum_{l_1,l_2} \epsilon(\Gamma_2, l_2),
    \end{displaymath}
    and the sum runs over all labellings $l_1$ and $l_2$ of $\Gamma_1$ and $\Gamma_2$ respectively, that are compatible with the standard labelling $l$ of $\Gamma$.
  \item Given $\Gamma_1\in N(n, d)$, and $\Gamma_2\in M(n,d)$,
    \begin{displaymath}
      \zeta_{\Gamma_1}\xi_{\Gamma_2} = \sum_{\Gamma\in N(n,d)} c^{\Gamma}_{\Gamma_1\Gamma_2} \zeta_\Gamma,
    \end{displaymath}
    where
    \begin{displaymath}
      c^{\Gamma}_{\Gamma_1\Gamma_2} = \sum_{l_1,l_2} \epsilon(\Gamma_1, l_1),
    \end{displaymath}
    and the sum runs over all labellings $l_1$ and $l_2$ of $\Gamma_1$ and $\Gamma_2$ respectively, that are compatible with the standard labelling $l$ of $\Gamma$.
  \item Given $\Gamma_1\in N(n, d)$, and $\Gamma_2\in N(n,d)$,
    \begin{displaymath}
      \zeta_{\Gamma_1}\zeta_{\Gamma_2} = \sum_{\Gamma\in M(n,d)} c^{\Gamma}_{\Gamma_1\Gamma_2} \xi_\Gamma,
    \end{displaymath}
    where
    \begin{displaymath}
      c^{\Gamma}_{\Gamma_1\Gamma_2} = \sum_{l_1,l_2} \epsilon(\Gamma_1, l_1)\epsilon(\Gamma_2, l_2),
    \end{displaymath}
    and the sum runs over all labellings $l_1$ and $l_2$ of $\Gamma_1$ and $\Gamma_2$ respectively, that are compatible with a fixed labelling $l$ of $\Gamma$.
  \end{enumerate}
\end{theorem}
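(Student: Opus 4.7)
The plan is to carry out the convolution-of-kernels calculation that established Theorem~\ref{theorem:schur-structure-const}, now with signs inserted via the kernel formula for $\zeta_\Gamma$ coming from the proof of Theorem~\ref{thm:basis_S-}. Recall that $\xi_\Gamma$ is the integral operator whose kernel is the indicator of $\{(S,T)\in B(n,d)^2 : \Gamma_{S,T}=\Gamma\}$, while $\zeta_\Gamma$ has kernel $\kappa_\Gamma(S,T)=\epsilon(\gamma_{S,T})$ on the same set (and zero elsewhere). In each of the three parts, the $\mathbf Z/2\mathbf Z$-grading~\eqref{eq:z2-grading} tells us in advance which family of basis elements can appear on the right-hand side: parts~(1) and~(2) land in $\sch_F^-(n,d)$ and so are supported on $N(n,d)$, whereas part~(3) lands in $\sch_F(n,d)$ and is supported on $M(n,d)$.

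To read off $c^\Gamma_{\Gamma_1\Gamma_2}$ I would fix a labelling $l$ of $\Gamma$, take the pair $(S_0,U_0)\in B(n,d)^2$ determined by $l$ as in the proof of Theorem~\ref{theorem:schur-structure-const} (choosing $l$ to be the standard labelling in parts~(1) and~(2), so that $\kappa_\Gamma(S_0,U_0)=+1$), and evaluate the convolution $(\kappa_{\Gamma_1}*\kappa_{\Gamma_2})(S_0,U_0)$ using~\eqref{eq:convolution-product}. The same bijection as in the symmetric-group case parametrizes the $T\in B(n,d)$ with $\Gamma_{S_0,T}=\Gamma_1$ and $\Gamma_{T,U_0}=\Gamma_2$ by pairs $(l_1,l_2)$ of labellings of $\Gamma_1,\Gamma_2$ compatible with $l$, via $l_1=l_{S_0,T}$ and $l_2=l_{T,U_0}$. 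The contribution of such a $T$ to the convolution is $\kappa_{\Gamma_1}(S_0,T)\kappa_{\Gamma_2}(T,U_0)$, which evaluates to $\epsilon(\Gamma_2,l_2)$ in part~(1), to $\epsilon(\Gamma_1,l_1)$ in part~(2), and to $\epsilon(\Gamma_1,l_1)\epsilon(\Gamma_2,l_2)$ in part~(3). Summing over all compatible pairs produces the stated formulas.

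The one non-routine point, and what I expect to be the main obstacle, is verifying in part~(3) that the sum $\sum_{l_1,l_2}\epsilon(\Gamma_1,l_1)\epsilon(\Gamma_2,l_2)$ is independent of the choice of labelling $l$ of $\Gamma$: since $\Gamma\in M(n,d)$ may carry multiple edges, there is no canonical standard labelling to appeal to. I would resolve this by noting that any two labellings of $\Gamma$ differ by a permutation $w\in\SG_d$ which stabilizes $\Gamma$ as a multigraph, and under $l\mapsto l\cdot w$ the set of compatible pairs transforms via $(l_1,l_2)\mapsto(l_1\cdot w,\,l_2\cdot w)$. Each sign $\epsilon(\Gamma_i,l_i)$ then picks up a factor of $\epsilon(w)$, so the product is multiplied by $\epsilon(w)^2=1$ and well-definedness follows. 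The analogous cancellation would fail in parts~(1) and~(2) if $\Gamma$ were not simple, which is why those parts invoke the standard labelling; for $\Gamma\in N(n,d)$ only $w=\id$ stabilizes $\Gamma$, so no such ambiguity can arise there.
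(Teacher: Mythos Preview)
Your proposal is correct and follows essentially the same approach as the paper: fix a labelling $l$ of $\Gamma$, take the associated pair $(S,U)\in B(n,d)^2$, evaluate the convolution kernel at $(S,U)$ via~\eqref{eq:convolution-product}, and use the bijection from the proof of Theorem~\ref{theorem:schur-structure-const} between the contributing $T$'s and compatible pairs $(l_1,l_2)$. The paper treats part~(1) in detail and dismisses the rest as ``similar''; your explicit well-definedness argument for part~(3) (via the factor $\epsilon(w)^2=1$) and your remark that taking the standard labelling in parts~(1)--(2) makes $\kappa_\Gamma(S_0,U_0)=+1$ are useful clarifications that the paper leaves implicit, but they do not change the underlying method.
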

\begin{proof}
  Given a labelling $l$ of $\Gamma$, construct $S$ and $U$ in $B(n,d)$ as in the proof of Theorem~\ref{theorem:schur-structure-const}.
  Define $\kappa_{\Gamma_2}:B(n,d)\times B(n,d)\to F$ by
  \begin{displaymath}
    \kappa_{\Gamma_2}(T, U) = \epsilon(\gamma_{T,U}).
  \end{displaymath}
  Then $\zeta_{\Gamma_2}$ is the integral operator $\xi_{\kappa_{\Gamma_2}}$, as in \eqref{eq:integral-operator}.
  Then, by Equation~\eqref{eq:convolution-product}, the structure constant in Part~\eqref{item:xi-zeta} of the theorem is given by:
  \begin{displaymath}
    c^\Gamma_{\Gamma_1\Gamma_2} = \sum_T \zeta_{\Gamma_2}(T, U),
  \end{displaymath}
  where the sum runs over all $T\in B(n,d)$ such that $\Gamma_{S,T}=\Gamma_1$, and $\Gamma_{T,U}=\Gamma_2$.
  Defining labelling $l_1$ and $l_2$ of $\Gamma_1$ and $\Gamma_2$ as in the proof of Theorem~\ref{theorem:schur-structure-const}, we find that $\zeta_{\Gamma_2}(T,U)=\epsilon(\Gamma_2,l_2)$, proving \eqref{item:xi-zeta}.
  The proofs of the remaining assertions are similar.
\end{proof}
\begin{definition}
  Given $\Gamma\in M(n,d)\sqcup N(n,d)$, we define $\Gamma^*$ to be the horizontal reflection of 
  $\Gamma$, i.e., $i'$ is connected to $j$ in $\Gamma^*$ if and only if $j'$ is connected to $i$ in $\Gamma$. The operation $*$ on the set $M(n,d)\sqcup N(n,d)$ is an involution.
\end{definition}
\begin{lemma}
  \label{lemma:anti-invol}
  For every $\Gamma\in N(n,d)$, let $l_0$ denote its standard labelling.
  Let $l_0^*$ denote the labelling of $\Gamma^*$ given by $l_0^*(i',j)=l_0(j',i)$.
  Then the linear map $\asch_F(n,d)\to \asch_F(n,d)$ defined by:
  \begin{align*}
    \xi_\Gamma & \mapsto \xi_{\Gamma^*} \text{ for $\Gamma\in M(n,d)$},\\
    \zeta_\Gamma & \mapsto \epsilon(\Gamma^*,l_0^*)\zeta_{\Gamma^*} \text{ for $\Gamma\in N(n,d)$}
  \end{align*}
  is an anti-involution of $\asch_F(n,d)$.
\end{lemma}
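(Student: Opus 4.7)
The plan is to realize the prescribed linear map as the transpose anti-involution on $\asch_F(n,d) = \End_{\AG_d}(F[B(n,d)])$ supplied by Lemma~\ref{lm:transpose}, and then check that on the two bases from Theorems~\ref{thm:basis_S} and \ref{thm:basis_S-} this transpose does exactly what the lemma asserts. Concretely, write $\Psi$ for the anti-involution $\xi_\kappa\mapsto\xi_{\kappa^*}$ on $\asch_F(n,d)$ from Lemma~\ref{lm:transpose} applied to $G=\AG_d$, $X=B(n,d)$. Our task reduces to computing $\Psi(\xi_\Gamma)$ and $\Psi(\zeta_\Gamma)$ from their integral kernels.

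For $\Gamma\in M(n,d)$, the kernel of $\xi_\Gamma$ is the indicator $\kappa_\Gamma(S,T)=1_{\Gamma_{S,T}=\Gamma}$, so $\kappa_\Gamma^*(T,S)=1_{\Gamma_{S,T}=\Gamma}$. Since swapping the order of the pair $(S,T)$ horizontally reflects the associated bipartite multigraph, the condition $\Gamma_{S,T}=\Gamma$ is equivalent to $\Gamma_{T,S}=\Gamma^*$. Thus $\kappa_\Gamma^* = \kappa_{\Gamma^*}$ and $\Psi(\xi_\Gamma)=\xi_{\Gamma^*}$.

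The substantive step is the analogous computation for $\zeta_\Gamma$, $\Gamma\in N(n,d)$. Its kernel is $\kappa_\Gamma(S,T)=\epsilon(\gamma_{S,T})$ when $\Gamma_{S,T}=\Gamma$ and zero otherwise, so $\kappa_\Gamma^*(T,S)=\epsilon(\gamma_{S,T})$ on the orbit of pairs $(T,S)$ with $\Gamma_{T,S}=\Gamma^*$. The key is to compare $\epsilon(\gamma_{S,T})$, computed against the standard labelling $l_0$ of $\Gamma$, with $\epsilon(\gamma_{T,S})$, computed against the standard labelling $\tilde l_0$ of $\Gamma^*$. Horizontal reflection $r$ is a bijection from the edges of $\Gamma$ to the edges of $\Gamma^*$, and by construction the labelled graph $\gamma_{T,S}$ has labelling $r\circ l$, where $l$ is the labelling of $\gamma_{S,T}$; moreover $l_0^*=r\circ l_0$. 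Therefore the permutation of $[d]$ comparing $\tilde l_0$ with $r\circ l$ factors as the permutation comparing $\tilde l_0$ with $l_0^*$ composed with the permutation comparing $l_0^*=r\circ l_0$ with $r\circ l$, and this last permutation is the same as the one comparing $l_0$ with $l$ because $r$ is a bijection of edge sets. Taking signs gives the cocycle identity
\begin{displaymath}
  \epsilon(\gamma_{T,S}) = \epsilon(\Gamma^*,l_0^*)\,\epsilon(\gamma_{S,T}).
\end{displaymath}
Since $\epsilon(\Gamma^*,l_0^*)=\pm 1$, this yields $\kappa_\Gamma^*(T,S)=\epsilon(\Gamma^*,l_0^*)\,\epsilon(\gamma_{T,S})$ on the orbit $\Gamma_{T,S}=\Gamma^*$, i.e.\ $\kappa_\Gamma^*=\epsilon(\Gamma^*,l_0^*)\,\kappa_{\Gamma^*}$, hence $\Psi(\zeta_\Gamma)=\epsilon(\Gamma^*,l_0^*)\,\zeta_{\Gamma^*}$, as desired.

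The main obstacle is the sign bookkeeping in the last paragraph: one must resist the temptation to identify $l_0^*$ with the standard labelling of $\Gamma^*$ (they differ by a permutation whose sign is precisely the correction factor appearing in the lemma), and one must verify that reflection of labellings acts as the identity under the corresponding permutation of $[d]$. Once this cocycle identity is in hand, everything else is routine manipulation of integral kernels, and the anti-involution property is inherited for free from Lemma~\ref{lm:transpose}.
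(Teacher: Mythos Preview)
Your proof is correct and follows essentially the same strategy as the paper's: both identify the prescribed map with the transpose anti-involution $\xi_\kappa\mapsto\xi_{\kappa^*}$ from Lemma~\ref{lm:transpose} (with $X=B(n,d)$, $G=\AG_d$) and then verify the action on the bases $\{\xi_\Gamma\}$ and $\{\zeta_\Gamma\}$ via their integral kernels. Your derivation of the cocycle identity $\epsilon(\gamma_{T,S}) = \epsilon(\Gamma^*,l_0^*)\,\epsilon(\gamma_{S,T})$ is a more explicit unpacking of what the paper does by checking the kernels at the single basepoint $(S,T)$ with $\gamma_{S,T}=(\Gamma,l_0)$ and then invoking $\SG_d$-equivariance to propagate to the orbit; the content is the same.
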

\begin{remark}
  The above involution, when restricted to the Schur algebra, is the same as the one described by Green \cite[Section~2.7]{MR2349209}.
\end{remark}
\begin{proof}
  We show that the linear map in Lemma~\ref{lemma:anti-invol} is the same as the anti-involution in Lemma~\ref{lm:transpose} with $X=B(n,d)$ and $G=\AG_d$.

  For $\Gamma\in M(n,d)$, $\xi_{\Gamma}$ is the integral operator with kernel:
  \begin{displaymath}
    \kappa_{\Gamma}(S,T)= \begin{cases}
      1 & \text{ if } \Gamma_{S,T}=\Gamma,\\
      0 & \text{ otherwise}.
    \end{cases}
  \end{displaymath}
  Since $\Gamma_{T,S}=\Gamma_{S,T}^*$,  $\kappa_\Gamma^*=\kappa_{\Gamma^*}$.

  For $\Gamma\in N(n,d)$, $\zeta_\Gamma$ is the integral operator with kernel:
  \begin{displaymath}
    \kappa_\Gamma(S,T) =
    \begin{cases}
      \epsilon(\gamma_{S,T}) & \text{ if } \Gamma_{S,T}=\Gamma,\\
      0 & \text{ otherwise}.
    \end{cases}
  \end{displaymath}
  Thus, if $\gamma_{S,T}=(\Gamma,l_0)$, then $\gamma_{T,S}=(\Gamma^*,l_0^*)$.
  Therefore,
  \begin{align*}
    \kappa_{\Gamma^*}(T,S)& = \epsilon(\gamma_{T,S})\\
    & = \epsilon(\Gamma^*,l_0^*)\kappa_\Gamma(S,T).
  \end{align*}
  So the kernels $\kappa_{\Gamma^*}$ and $\epsilon(\Gamma^*,l_0^*)\kappa_\Gamma^*$ coincide at $(T,S)$, and hence on its entire $\SG_d$-orbit in $B(n,d)$.
\end{proof}

We illustrate the above results with an example that will be used in the proof of Lemma~\ref{lemma:zetazeta}.
\begin{example}
  \label{example:zetazeta}
  Recall that $\Lambda(n,d)$ denotes the set of all weak compositions of $d$ with at most $n$ parts.
  For $\lambda\in \Lambda(n,d)$ with $n\geq d$, let $\Gamma_\lambda\in N(n,d)$ denote the bipartite graph where $i'$ is connected to $j$ if
  \begin{displaymath}
    \lambda_1 + \dotsb +\lambda_{i'-1}< j \leq \lambda_1 + \dotsb + \lambda_{i'}.
  \end{displaymath}

  Then we have:
  \begin{displaymath}
    \zeta_{\Gamma_\lambda} \zeta_{\Gamma_\lambda^*} = \lambda_1!\dotsb \lambda_n! \xi_{\Gamma^0_\lambda},
  \end{displaymath}
  where $\Gamma^0_\lambda\in M(n,d)$ is the bipartite multigraph where $i'$ is connected to $i$ by $\lambda_i$ edges.
  For example,
  \begin{displaymath}
    \Gamma_{(2,1)}^* =
    \vcenter{\xymatrix{
        \bullet \ar@{-}[d]\ar@{-}[dr] & \bullet \ar@{-}[dr] & \bullet\\
        \bullet & \bullet & \bullet
      }},
    \text{ and }
    \Gamma_{(2,1)} =
    \vcenter{\xymatrix{
        \bullet \ar@{-}[d] & \bullet \ar@{-}[dl] & \bullet \ar@{-}[dl]\\
        \bullet & \bullet & \bullet
      }}
  \end{displaymath}
  For a labelling $l_1 = \vcenter{\xymatrix{
      \bullet \ar@{-}[d]|a\ar@{-}[dr]|b & \bullet \ar@{-}[dr]|c & \bullet\\
      \bullet & \bullet & \bullet
    }}$
  of $\Gamma^*_{(2,1)}$ only the labelling\linebreak $l_2 = \vcenter{\xymatrix{
      \bullet \ar@{-}[d]|a & \bullet \ar@{-}[dl]|b & \bullet \ar@{-}[dl]|c\\
      \bullet & \bullet & \bullet
    }}$
  of $\Gamma_{(2,1)}$, and the labelling $l = \vcenter{\xymatrix{
      \bullet \ar@{=}[d]|{ab} & \bullet \ar@{-}[d]|c & \bullet\\
      \bullet & \bullet & \bullet
    }}$
  of $\Gamma^0_{(2,1)}$ are such that $(l_1,l_2)$ are compatible with $l$.
  Moreover, $\epsilon(\Gamma_{(2,1)}^*,l_1)=\epsilon(\Gamma_{(2,1)},l_2)$.
  Interchanging the labels $a$ and $b$ in $l_{1}$ and $l_{2}$, respectively, gives another pair of labels compatible with $l$, so that $\zeta_{\Gamma_{\lambda}}\zeta_{\Gamma_{\lambda}^*} = 2\xi_{\Gamma_\lambda^0}$.
\end{example}
The remaining results in this section help us understand the structure of $\sch^-_F(n,d)$ as an $\sch_F(n,d)$-module.
Some of them will play an important role in understanding Koszul duality (Section~\ref{sec:koszul-duality-schur}).

The notion of standard labelling (Definition~\ref{definition:std-lab}) of graphs in $N(n,d)$ can be extended to graphs in $M(n,d)$ as follows: when an edge $(i',j)$ occurs with multiplicity $m$, it is simply listed $m$ times when the edges are arranged in lexicographic order with priority given to the upper index.
Example~\ref{example:std-lab-multi} is the standard labelling of its underlying graph.
\begin{definition}
  For $n\geq d$, define the following simple bipartite graphs associated to $\Gamma\in M(n,d)$:
  \begin{enumerate}
  \item Let $D(\Gamma)\in N(n,d)$ be the graph with edges $(i',s)$ for every edge $(i',j)$ with label $s$ under the standard labelling of $\Gamma$.
  \item Let $U(\Gamma)\in N(n,d)$ be the graph with edges $(s',j)$ for every edge $(i',j)$ with label $s$ under the standard labelling of $\Gamma$.
\end{enumerate}
\end{definition}
\begin{example}
  Let $n=5$, $d=5$, and $\Gamma$ (with its standard labelling) is given by:
  \begin{displaymath}
    \vcenter{\xymatrix{
        \bullet\ar@{-}[d]|1\ar@{-}[dr]|2 & \bullet \ar@{-}[dr]|-<<{3} & \bullet \ar@{=}[dl]|-<<{45} & \bullet & \bullet\\
        \bullet & \bullet & \bullet & \bullet & \bullet
      }}
  \end{displaymath}
  Then
  \begin{displaymath}
    D(\Gamma)=\vcenter{\xymatrix{
        \bullet \ar@{-}[d]|1 \ar@{-}[dr]|2 & \bullet \ar@{-}[dr]|3 & \bullet \ar@{-}[dr]|4\ar@{-}[drr]|5 & \bullet & \bullet \\
        \bullet & \bullet & \bullet & \bullet & \bullet
      }}
  \end{displaymath}
  \text{and}
  \begin{displaymath}
    U(\Gamma)=\vcenter{\xymatrix{
        \bullet \ar@{-}[d]|1 & \bullet \ar@{-}[d]|2 & \bullet \ar@{-}[d]|-<<3 & \bullet \ar@{-}[dll]|-<<<<4 & \bullet\ar@{-}[dlll]|5\\
        \bullet & \bullet & \bullet & \bullet & \bullet
      }}
  \end{displaymath}
\end{example}
The significance of the elements $U(\Gamma)$ and $D(\Gamma)$, for $\Gamma\in M(n,d)$, is elaborated in the 
following lemmas.
\begin{lemma}\label{lm:bmodule}
  Let $n\geq d$ and $\Gamma\in N(n,d)$. Then $\zeta_{\Gamma}=\xi_{U(\Gamma)}\zeta_{\Gamma_{{\lambda}_{0}}}\xi_{D(\Gamma)}$, where $\lambda_{0}=(1^d,0^{n-d})\in\Lambda(n,d)$. Consequently, $\sch_F^-(n,d)$ is a cyclic $(\sch_F(n,d),\sch_F(n,d))$-bimodule.
\end{lemma}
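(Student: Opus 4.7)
My plan is to work with integral kernels throughout. By \eqref{eq:convolution-product}, the right-hand side is the integral operator whose kernel is the triple convolution $\kappa_{U(\Gamma)} * \kappa_{\Gamma_{\lambda_0}} * \kappa_{D(\Gamma)}$, and I will match this pointwise with $\kappa_\Gamma$ from Theorem~\ref{thm:basis_S-}. Since $\lambda_0 = (1^d, 0^{n-d})$, the graph $\Gamma_{\lambda_0}$ is the identity matching $\{(i',i) : 1 \le i \le d\}$, so the condition $\Gamma_{T_1,T_2} = \Gamma_{\lambda_0}$ forces $T_1 = T_2$ to place exactly one ball in each of the first $d$ boxes (and none elsewhere). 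Such configurations are parameterized by $w \in \SG_d$ via $T_1(w) := (\{w^{-1}(1)\},\dotsc,\{w^{-1}(d)\},\emptyset,\dotsc,\emptyset)$, and a direct check against Definition~\ref{definition:std-lab} shows $\epsilon(\gamma_{T_1(w),T_1(w)}) = \sgn(w)$.

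Next I would evaluate the convolution at a pair $(S,U) \in B(n,d)^2$. Writing the standard labelling of $\Gamma$ as $l_0(s) = (i'_s, j_s)$, the graph $U(\Gamma)$ has edges $\{(s',j_s)\}$ and $D(\Gamma)$ has edges $\{(i'_s,s)\}$. Introducing the location function $\sigma_S : [d] \to [n]$ sending each ball to the unique box of $S$ containing it (and similarly $\sigma_U$), the condition $\Gamma_{S,T_1(w)} = U(\Gamma)$ translates to $\sigma_S(w^{-1}(s)) = j_s$ for all $s$, and $\Gamma_{T_1(w),U} = D(\Gamma)$ translates to $\sigma_U(w^{-1}(s)) = i_s$. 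Setting $b = w^{-1}(s)$, these combine to $(\sigma_U(b), \sigma_S(b)) = (i_{w(b)}, j_{w(b)})$ for all $b \in [d]$. Since $\Gamma$ is simple, such a $w$ exists uniquely exactly when the multiset $\{(\sigma_U(b), \sigma_S(b)) : b \in [d]\}$ coincides with the edge set of $\Gamma$, i.e.\ exactly when $\Gamma_{S,U} = \Gamma$. Moreover, the relation $l_{S,U} = l_0 \circ w$ that drops out of the same calculation yields $\sgn(w) = \epsilon(\gamma_{S,U})$, so the convolution evaluates to $\kappa_\Gamma(S,U)$ as required.

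The cyclicity statement is then immediate: every basis element $\zeta_\Gamma$ of $\sch^-_F(n,d)$ (Theorem~\ref{thm:basis_S-}) lies in $\sch_F(n,d) \cdot \zeta_{\Gamma_{\lambda_0}} \cdot \sch_F(n,d)$, so $\zeta_{\Gamma_{\lambda_0}}$ is a cyclic generator. The main technical obstacle I anticipate is the sign bookkeeping — in particular, verifying that the single permutation $w$ playing two roles (indexing the middle configuration $T_1(w)$, and being extracted from $\gamma_{S,U}$ as $l_0^{-1} \circ l_{S,U}$) really produces matching signs in both places. This rests on the lexicographic convention built into Definition~\ref{definition:std-lab} and deserves a careful, if routine, verification.
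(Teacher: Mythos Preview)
Your proof is correct. The sign bookkeeping you flag as a potential obstacle does work out exactly as you sketched: with $l_{S,U}(b)=(\sigma_U(b)',\sigma_S(b))=l_0(w(b))$, the permutation $l_0^{-1}\circ l_{S,U}$ is precisely $w$, matching the sign $\sgn(w)=\epsilon(\gamma_{T_1(w),T_1(w)})$ coming from the middle factor.

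Your route differs from the paper's. The paper does not compute the triple convolution directly; instead it factors the identity into two steps, $\zeta_{\Gamma_{\lambda_0}}\xi_{D(\Gamma)}=\zeta_{D(\Gamma)}$ and then $\xi_{U(\Gamma)}\zeta_{D(\Gamma)}=\zeta_\Gamma$, and checks each via the structure-constant formula of Theorem~\ref{theorem:stru-consts-alt}\eqref{item:xi-zeta}: for the second identity, one observes that the standard labellings of $U(\Gamma)$ and $D(\Gamma)$ form the \emph{unique} pair compatible with the standard labelling of $\Gamma$, so the sum over compatible pairs has a single term with sign $+1$. Your approach is more self-contained---it works at the level of integral kernels and does not invoke Theorem~\ref{theorem:stru-consts-alt}---at the cost of having to parameterize the intermediate configurations by $\SG_d$ and track the sign explicitly. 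The paper's approach is shorter because the uniqueness of the compatible labelling makes the sign calculation trivial, but it presupposes the machinery of Theorem~\ref{theorem:stru-consts-alt}. Both arguments ultimately exploit the same combinatorial fact: simplicity of $\Gamma$ forces a unique ``middle'' datum once the outer data $(S,U)$ are fixed.
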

\begin{proof}
  This can be done in two steps.
  Firstly, $\zeta_{\Gamma_{{\lambda}_{0}}}\xi_{D(\Gamma)}=\zeta_{D(\Gamma)}$, and secondly $\xi_{U(\Gamma)}\zeta_{D(\Gamma)}=\zeta_\Gamma$.
  We indicate the proof of the second identity (the first is similar):
  Let $l_0$, $l_1$, and $l_2$ be the standard labellings of $\Gamma$, $U(\Gamma)$ and $D(\Gamma)$ respectively. 
  The labellings $(l_{1},l_{2})$ of $D(\Gamma)$ and $U(\Gamma)$ are the only ones that are compatible with $l_{0}$.
  This is because, for the edge $(i',j)$ of $\Gamma$ labelled $s$, $s$ is the unique vertex such $i'$ is connected to $s$ in $D(\Gamma)$ and $s'$ is connected to $j$ in $U(\Gamma)$.
  The identity now follows from \eqref{item:xi-zeta}.
\end{proof}
Similarly, we have:
\begin{lemma}\label{lm:lrmodule}
  For $n\geq d$ and $\Gamma\in N(n,d)$, we have
  $\zeta_{\Gamma}=\zeta_{U(\Gamma)}\xi_{D(\Gamma)}$ and $\zeta_{\Gamma}=\xi_{U(\Gamma)}\zeta_{D(\Gamma)}$.
\end{lemma}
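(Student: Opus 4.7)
My plan is to handle the two identities separately. The second, $\zeta_\Gamma = \xi_{U(\Gamma)}\zeta_{D(\Gamma)}$, is in fact already proved as the second step in the proof of Lemma~\ref{lm:bmodule}, so I will point to that argument directly; the real work lies with the first identity $\zeta_\Gamma = \zeta_{U(\Gamma)}\xi_{D(\Gamma)}$. I plan to prove it by a mirror of the argument just referenced, now invoking part~(2) of Theorem~\ref{theorem:stru-consts-alt} with $\Gamma_1 = U(\Gamma)$ and $\Gamma_2 = D(\Gamma)$ in place of part~(1). The goal is to show that the structure constant $c^{\Gamma'}_{U(\Gamma),D(\Gamma)}$ vanishes unless $\Gamma' = \Gamma$, in which case it equals $+1$.

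Let $l_0$ denote the standard labelling of $\Gamma$, and for each $s\in[d]$ write $l_0(s) = (i_s', j_s)$. The candidate labellings are $l_1(s) := (s', j_s)$ of $U(\Gamma)$ and $l_2(s) := (i_s', s)$ of $D(\Gamma)$. By the very definitions of $U(\Gamma)$ and $D(\Gamma)$ these are edges of the respective graphs, and as $s$ ranges over $[d]$ they cover each edge set exactly once, so $l_1$ and $l_2$ are valid labellings. Compatibility with $l_0$ in the sense of Definition~\ref{def:compatible} is immediate: the ball label $s$ itself serves as the middle linking index, and the remaining coordinates reproduce $l_0(s) = (i_s', j_s)$. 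Moreover, since the edges $(s', j_s)$ of $U(\Gamma)$ taken in lexicographic order by upper vertex are just $(1', j_1), (2', j_2), \ldots, (d', j_d)$, the labelling $l_1$ coincides with the standard labelling of $U(\Gamma)$, yielding $\epsilon(U(\Gamma), l_1) = +1$.

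To complete the argument I would show that this is the only compatible pair and that no other target $\Gamma' \in N(n,d)$ contributes. The key observation is that any bijective labellings $l_1, l_2$ of $U(\Gamma)$ and $D(\Gamma)$ must have the form $l_1(s) = (\sigma(s)', j_{\sigma(s)})$ and $l_2(s) = (i_{\tau(s)}', \tau(s))$ for some permutations $\sigma, \tau \in \SG_d$; compatibility forces $\sigma = \tau$, after which the target labelling determined by $(l_1, l_2)$ is $l_0 \circ \sigma$. For this to agree with the standard labelling of some $\Gamma' \in N(n,d)$ one must have $\Gamma' = \Gamma$ and $\sigma = \id$. The one delicate point is this rigidity step, while the rest is a direct transposition of the bookkeeping already carried out in the proof of Lemma~\ref{lm:bmodule}.
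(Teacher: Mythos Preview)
Your proposal is correct and follows essentially the same approach as the paper. The paper merely indicates that the proof is analogous to that of Lemma~\ref{lm:bmodule}; your argument makes this analogy explicit by reusing the second step of that proof for the identity $\zeta_\Gamma=\xi_{U(\Gamma)}\zeta_{D(\Gamma)}$ and then mirroring it via part~(2) of Theorem~\ref{theorem:stru-consts-alt} for $\zeta_\Gamma=\zeta_{U(\Gamma)}\xi_{D(\Gamma)}$, with the uniqueness of the compatible pair following from the fact that in $U(\Gamma)$ and $D(\Gamma)$ the middle vertex is forced to be the ball label~$s$.
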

\begin{corollary}
  As a left $\sch_F(n,d)$-module, $\sch^-_F(n,d)$ is generated by $\{\zeta_{\Gamma_\lambda^*}\mid \lambda\in \Lambda(n,d)\}$, and as a right $\sch_F(n,d)$-module, it is generated by $\{\zeta_{\Gamma_\lambda}\mid \lambda\in \Lambda(n,d)\}$.  Here $\Gamma_\lambda$ is the graph associated to $\lambda$ in Example~\ref{example:zetazeta}.
\end{corollary}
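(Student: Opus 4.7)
The plan is to reduce the left-module claim to Lemma~\ref{lm:lrmodule} together with the basis of $\sch_F^-(n,d)$ from Theorem~\ref{thm:basis_S-}, and then obtain the right-module claim by applying the anti-involution of Lemma~\ref{lemma:anti-invol}. Since $\{\zeta_\Gamma : \Gamma\in N(n,d)\}$ is a basis of $\sch_F^-(n,d)$, for the first claim it suffices to exhibit, for each $\Gamma\in N(n,d)$, a $\lambda\in \Lambda(n,d)$ such that $\zeta_\Gamma\in \sch_F(n,d)\cdot\zeta_{\Gamma_\lambda^*}$. Lemma~\ref{lm:lrmodule} already supplies the factorization $\zeta_\Gamma=\xi_{U(\Gamma)}\zeta_{D(\Gamma)}$, so the remaining task is to identify $D(\Gamma)$, as $\Gamma$ varies over $N(n,d)$, with a graph of the form $\Gamma_\lambda^*$.

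To do this I would unwind the standard labelling. If $\lambda=(\lambda_1,\dotsc,\lambda_n)$ is the degree sequence at the upper row of $\Gamma$, then the standard labelling (which enumerates edges lexicographically with upper-index priority) assigns the consecutive block of labels $\lambda_1+\dotsb+\lambda_{i-1}+1,\dotsc,\lambda_1+\dotsb+\lambda_i$ to the $\lambda_i$ edges incident to $i'$. By the definition of $D(\Gamma)$, these labels become the lower vertices of the edges of $D(\Gamma)$ at $i'$, so the edges of $D(\Gamma)$ at $i'$ form exactly the consecutive block indicated. Comparing with Example~\ref{example:zetazeta}, this is precisely the graph $\Gamma_\lambda^*$. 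Hence $\zeta_\Gamma=\xi_{U(\Gamma)}\zeta_{\Gamma_\lambda^*}$ lies in the desired left span.

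For the right-module statement, I would apply the anti-involution $\iota$ of $\asch_F(n,d)$ from Lemma~\ref{lemma:anti-invol}. Since $\iota(\xi_{\Gamma})=\xi_{\Gamma^*}$ for every $\Gamma\in M(n,d)$, it restricts to an anti-involution of the subalgebra $\sch_F(n,d)$; and by construction $\iota$ sends $\zeta_{\Gamma_\lambda^*}$ to $\epsilon(\Gamma_\lambda,l_0^*)\zeta_{\Gamma_\lambda}$, a nonzero scalar multiple of $\zeta_{\Gamma_\lambda}$. Since an anti-involution interchanges left and right module structures on the $(\sch_F(n,d),\sch_F(n,d))$-bimodule $\sch_F^-(n,d)$, applying $\iota$ to the identity $\sch_F^-(n,d)=\sch_F(n,d)\cdot\langle\zeta_{\Gamma_\lambda^*}\rangle_{\lambda\in\Lambda(n,d)}$ yields $\sch_F^-(n,d)=\langle\zeta_{\Gamma_\lambda}\rangle_{\lambda\in\Lambda(n,d)}\cdot\sch_F(n,d)$, as required.

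The only step that needs genuine care is the identification $D(\Gamma)=\Gamma_\lambda^*$: the graphs $\Gamma_\lambda$ and $\Gamma_\lambda^*$ have similarly shaped block structures sitting on opposite sides of the drawing and are easy to confuse. Once this bookkeeping is pinned down, the remainder of the argument is essentially immediate from Lemma~\ref{lm:lrmodule} and the anti-involution already in hand.
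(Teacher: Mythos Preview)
Your argument is correct and follows essentially the same route as the paper: the left-module claim is obtained from the factorization $\zeta_\Gamma=\xi_{U(\Gamma)}\zeta_{D(\Gamma)}$ of Lemma~\ref{lm:lrmodule} together with the identification $D(\Gamma)=\Gamma_\lambda^*$, and the right-module claim is then deduced via the anti-involution of Lemma~\ref{lemma:anti-invol}. The only cosmetic difference is that the paper phrases the second step as applying the anti-involution to the first identity $\zeta_\Gamma=\zeta_{U(\Gamma)}\xi_{D(\Gamma)}$ of Lemma~\ref{lm:lrmodule}, whereas you apply it to the left-generation statement already obtained; these are equivalent manipulations.
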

\begin{proof}
  For any $\Gamma\in M(n,d)$, $D(\Gamma)$ is of the form $\Gamma_\lambda^*$ for some $\lambda\in \Lambda(n,d)$, so the statement for left modules follows from the second identity in Lemma~\ref{lm:lrmodule}.
  The statement for right modules follows by applying Lemma~\ref{lemma:anti-invol} to the first identity in Lemma~\ref{lm:lrmodule}.
\end{proof}
\begin{lemma}\label{lm:inj}
  Let $n\geq d$ and $\Gamma\in M(n,d)$. Then, for $\Gamma'\in M(n,d)$, the structure constant of $\zeta_{U(\Gamma)}$ in the product $\xi_{\Gamma'}\zeta_{D(\Gamma)^*}$ is $\delta_{\Gamma',\Gamma}$.
\end{lemma}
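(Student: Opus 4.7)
The plan is to invoke Theorem \ref{theorem:stru-consts-alt}(\ref{item:xi-zeta}) and show that the resulting signed sum of compatible labellings reduces to a single term precisely when $\Gamma'=\Gamma$. First I would set up notation for the three standard labellings in play: writing the standard labelling of $\Gamma$ as sending $s\in[d]$ to an edge $(i_s',j_s)$, the definitions of $U$ and $D$ yield $U(\Gamma)=\{(s',j_s):s\in[d]\}$ and $D(\Gamma)^{*}=\{(s',i_s):s\in[d]\}$. Since the upper vertices of the edges are all distinct within each of these graphs, their standard labellings are completely transparent: the standard labelling of $U(\Gamma)$ sends $s$ to $(s',j_s)$ and that of $D(\Gamma)^{*}$ sends $s$ to $(s',i_s)$. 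These three standard labellings, all governed by the common data $(i_s,j_s)_{s=1}^{d}$, are the combinatorial backbone of the argument.

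Next I would feed the standard labelling $l(s)=(s',j_s)$ of $U(\Gamma)$ into the compatibility condition of Definition~\ref{def:compatible}. The structural rigidity of $D(\Gamma)^{*}$---that it has exactly one edge with each upper vertex $s'$---pins down $l_2$ completely as $l_2(s)=(s',i_s)$, which is the standard labelling of $D(\Gamma)^{*}$ and hence contributes sign $\epsilon(D(\Gamma)^{*},l_2)=+1$. The remaining half of the compatibility condition then determines $l_1$ uniquely: for each $s$, one is forced to set $l_1(s)=(i_s',j_s)$, which is precisely the standard labelling of $\Gamma$.

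The final step is to read off when this forced $l_1$ is actually a valid labelling of $\Gamma'$. This requires the multiset $\{(i_s',j_s):s\in[d]\}$ to match the edge multiset of $\Gamma'$; but this multiset is exactly the edge multiset of $\Gamma$, so $\Gamma'=\Gamma$. In the matching case, the sum in Theorem~\ref{theorem:stru-consts-alt}(\ref{item:xi-zeta}) has a single term with sign $+1$, while for $\Gamma'\neq\Gamma$ it is empty. This gives the claimed Kronecker delta $\delta_{\Gamma',\Gamma}$. The only real subtlety is tracking the three standard labellings in parallel; once the $(i_s,j_s)$ alignment they share is made explicit, the remainder is essentially bookkeeping.
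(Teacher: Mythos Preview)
Your argument is correct and follows essentially the same route as the paper: fix the standard labelling $l(s)=(s',j_s)$ of $U(\Gamma)$, use the fact that $D(\Gamma)^*$ has a unique edge with upper vertex $s'$ to force $l_2(s)=(s',i_s)$ (the standard labelling, hence sign $+1$), and then read off $l_1(s)=(i_s',j_s)$ from the remaining compatibility condition, which forces $\Gamma'=\Gamma$. Your version is just a bit more explicit in tracking the data $(i_s,j_s)$ than the paper's.
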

\begin{proof}
  The edge $(i',j)$ with label $s$ in the standard labelling of $\Gamma$ gives rise to an edge $(s',j)$ with standard label $s$ in $U(\Gamma)$.
  The graph $D(\Gamma)^*$ has only one edge originating at $s'$, namely $(s',i)$.
  Therefore, for any compatible pair $(l_1,l_2)$ of labellings of $D(\Gamma)^*$ and $\Gamma'$, this edge must have label $s$.
  Thus $\Gamma'$ must have an edge $(i',j)$ labelled $s$.
  In other words, $\Gamma'=\Gamma$ and $l_2$ is its standard labelling.
\end{proof}
\section{Abstract Koszul duality}
\subsection{The algebra}
\label{sec:alg}
Recall \cite[Chapter III, Section~3.1]{MR1727844} that a $\ZZ/2\ZZ$ grading on a ring $\asch$ is a decomposition $\asch = \sch\oplus \sch^-$ into additive subgroups such that  $\sch$ is a subring, $\sch^-$ is closed under left and right multiplication by elements of $\sch$, and for any $\alpha,\beta\in \sch^-$, $\alpha\beta\in \sch$.
This $\ZZ/2\ZZ$-grading gives rise to:
\renewcommand{\theenumi}{\alph{enumi}}
\setcounter{enumi}{0}
\begin{enumerate}
\item \label{item:1} an $(\sch,\sch)$-bimodule structure on $\sch^-$,
\item \label{item:2} and an $(\sch,\sch)$-bimodule homomorphism $\phi:\sch^-\otimes_{\sch} \sch^-\to \sch$ (induced by the $\sch$-balanced bilinear map $(\alpha,\beta)\mapsto \alpha\beta$ for $\alpha, \beta\in \sch^-$).
\end{enumerate}
\renewcommand{\theenumi}{\thetheorem.\alph{enumi}}
\begin{example}
  \label{example:schur-graded}
  We may take $\asch=\asch_F(n,d)=\End_{\AG_d}((F^n)^{\otimes d})$, and $\
  \sch=\sch_F(n,d)=\End_{\SG_d}((F^n)^{\otimes d})$, for any field $F$ with characteristic different from $2$.
\end{example}
\subsection{Modules}
\label{sec:modules}
Let $M$ be an $\asch$-module.
The $\asch$-module structure can be viewed as a linear map:
\begin{displaymath}
  \asch\otimes_\ZZ M = (\sch\oplus \sch^-)\otimes_\ZZ M = (\sch\otimes_\ZZ M)\oplus (\sch^-\otimes_\ZZ M)\to M.
\end{displaymath}
So $M$ is an $\sch$-module, and restriction of the module action $\asch\otimes_\ZZ M\to M$ to $\sch^-\otimes_\ZZ M$ induces an $\sch$-module homomorphism:
\begin{equation}
  \label{eq:3}
  \theta_M:\sch^-\otimes_{\sch} M\to M.
\end{equation}
Furthermore, this homomorphism $\theta_M$ has the property that the diagram
\begin{equation}
  \label{eq:1}
  \xymatrix{
    \sch^-\otimes_{\sch} \sch^-\otimes_{\sch} M \ar[rr]^{\phi\otimes \id_M}\ar[d]_{\id_{\sch^-}\otimes \theta_M}\ar@{}[drr] && S\otimes_{\sch} M \ar@{=}[d]\\
    \sch^-\otimes_{\sch} M \ar[rr]_{\theta_M} && M
  }
\end{equation}
commutes.
\begin{definition}
  Given an $(\sch,\sch)$-bimodule $\sch^-$ and an $(\sch,\sch)$-bimodule homomorphism $\phi:\sch^-\otimes_{\sch} \sch^-\to \sch$,
  for an $\sch$-module $N$, an $\sch$-module homomorphism $\theta:\sch^-\otimes_{\sch} N\to N$ is said to be compatible with $\phi$ if the diagram
  \begin{equation}
    \label{eq:2}
    \xymatrix{
      \sch^-\otimes_{\sch} \sch^-\otimes_{\sch} N \ar[rr]^{\phi\otimes \id_N}\ar[d]_{\id_{\sch^-}\otimes \theta}\ar@{}[drr] && S\otimes_{\sch} N \ar@{=}[d]\\
      \sch^-\otimes_{\sch} N \ar[rr]_{\theta} && N
    }
  \end{equation}
  commutes.
\end{definition}
\subsection{Duality}
\label{sec:duality}
Let $\asch=\sch\oplus \sch^-$ be as before.
Define a functor $D:\mod \sch\to \mod \sch$ by:
\begin{displaymath}
  D(M) = \sch^-\otimes_{\sch} M,
\end{displaymath}
for every $\sch$-module $M$.
Given $\sch$-modules $M$ and $N$, and an $S$-module homomorphism $f:M\to N$, let $D(f) = \id_{\sch^-}\otimes f: D(M)\to D(N)$.
We call the  resulting functor $D:\mod{\sch}\to \mod{\sch}$ an \emph{abstract Koszul duality functor}.
In Section \ref{sec:koszul-duality-schur} it will be shown that, in the setting of Example \ref{example:schur-graded} (the alternating Schur algebra), abstract Koszul duality is essentially the Koszul duality functor of Krause\cite{MR3077659}.

The commutative diagram \eqref{eq:2}, defining the compatibility of $\theta$ with $\phi$, can be rewritten in terms of abstract Koszul duality as:
\begin{equation}
  \label{eq:compatibility}
  \xymatrix{
    D^2(N) \ar[rr]^{\phi\otimes \id_N}\ar[d]_{D(\theta)}\ar@{}[drr] && N\\
    D(N) \ar[urr]_{\theta} &&
  }
\end{equation}
\begin{definition}
  Given an $(\sch,\sch)$-bimodule $\sch^-$ and an $(\sch,\sch)$-bimodule homomorphism $\phi:\sch^-\otimes_{\sch} \sch^-\to \sch$, let $\mod{(\sch,\phi)}$ denote the category whose objects are pairs $(N,\theta)$, where $N$ is an $\sch$-module, and $\theta:D(N)\to N$ is compatible with $\phi$.
  A morphism $(N,\theta)\to(N',\theta')$ is an $\sch$-module homomorphism $f:N\to N'$ such that the diagram
  \begin{displaymath}
    \xymatrix{
      D(N) \ar[r]^\theta \ar[d]_{D(f)} & N\ar[d]^f\\
      D(N') \ar[r]^{\theta'} & N'
    }
  \end{displaymath}
  commutes.
\end{definition}
\begin{theorem}
  \label{thm:iso}
  Given an $\asch$-module $M$, let $\theta_M$ be as in \eqref{eq:3}.
  Then $M\mapsto (M,\theta_M)$ is an isomorphism of categories $\mod{\asch}\to \mod{(\sch,\phi)}$.
\end{theorem}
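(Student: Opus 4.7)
My approach is to construct an explicit inverse functor $G:\mod{(\sch,\phi)} \to \mod{\asch}$ to the candidate functor $F:M \mapsto (M,\theta_M)$, and then to verify that $G \circ F$ and $F \circ G$ are literally the identity functors. Since the claim is equality of categories rather than merely an equivalence, this verification must hold on the nose, which is why an explicit $G$ is preferable to an abstract quasi-inverse.

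Given $(N,\theta) \in \mod{(\sch,\phi)}$, I define an $\asch$-module structure on $N$ using the decomposition $\asch = \sch \oplus \sch^-$: keep the given $\sch$-action, and for $\alpha \in \sch^-$ and $n \in N$ set $\alpha \cdot n := \theta(\alpha \otimes n)$. On morphisms, $G$ sends each $\sch$-linear $f$ to itself. To check that this makes $N$ into an $\asch$-module, I need to verify three associativity identities. The relation $(s\alpha) \cdot n = s(\alpha \cdot n)$ for $s \in \sch$ follows because $\theta$ is $\sch$-linear, and $(\alpha s) \cdot n = \alpha \cdot (sn)$ follows from $\sch$-balance in the source of $\theta$. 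The one substantial identity is $(\alpha\beta) \cdot n = \alpha \cdot (\beta \cdot n)$ for $\alpha,\beta \in \sch^-$: since $\alpha\beta \in \sch$ equals $\phi(\alpha \otimes \beta)$ by the construction of $\phi$ in \ref{item:2}, this is exactly the commutativity of the compatibility diagram \eqref{eq:2}. Dually, the commutativity condition on a morphism in $\mod{(\sch,\phi)}$ says precisely that $f$ commutes with the $\sch^-$-action defined via $\theta$, so such an $f$ is automatically $\asch$-linear, and the converse is immediate.

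Checking that $F$ and $G$ are mutually inverse is then essentially tautological: the underlying $\sch$-module of $G(F(M))$ agrees with $M$, and its $\sch^-$-action via $\theta_M$ coincides with the original $\asch$-action by the very definition \eqref{eq:3} of $\theta_M$; conversely, $\theta_{G(N,\theta)} = \theta$ by construction, and both identifications are evidently natural in morphisms. The main conceptual point, rather than an obstacle, is the observation that diagram \eqref{eq:2} is precisely a diagrammatic restatement of the single associativity axiom for two $\sch^-$ factors acting on a module; once this is recognized, the proof reduces to bookkeeping about tensor products and $\sch$-balance, with no serious difficulty.
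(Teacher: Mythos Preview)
Your proof is correct and follows essentially the same approach as the paper: construct the inverse functor by extending the $\sch$-action on $N$ to an $\asch$-action via $\theta$, with the compatibility diagram~\eqref{eq:2} supplying exactly the associativity for two factors from $\sch^-$. The paper's own proof is a one-sentence sketch of this construction, so your version simply fills in the details it omits.
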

\begin{proof}
  Given an object $(N,\theta)$ in $\mod{(\sch,\phi)}$, the compatibility of $\theta$ with $\phi$ allows the $\sch$-module structure on $N$ to be extended to an $\asch$-module structure.
  This constructs the inverse of the functor in the theorem.
\end{proof}
Given an $\sch$-module $N$, the morphism $\phi:\sch^-\otimes_{\sch} \sch^-\to \sch$ gives rise to a natural transformation $\eta: D^2 \to \id_{\mod \sch}$, defined as the composition:
\begin{equation}
  \label{eq:4}
  \xymatrix{
    D^2(N) \ar[rr]^{\phi\otimes \id_N} \ar[drr]_{\eta_N} && \sch\otimes_{\sch} N \ar@{=}[d]\\
    & & N
  }
\end{equation}
\begin{theorem}
  \label{theorem:phi-iso_con}
  Let $\asch$, $\sch$, $\sch^-$ and $\phi$ be as in Section~\ref{sec:alg}.
  The following are equivalent:
  \begin{enumerate}
  \item\label{item:3} The map $\phi:\sch^-\otimes_{\sch} \sch^-\to \sch$ is an isomorphism.
  \item\label{item:4} The natural transformation $\eta: D^2 \to \id_{\mod \sch}$ is a natural isomorphism.
  \item\label{item:5} For every object $(N,\theta)$ in $\mod{(\sch,\phi)}$, $\theta:\sch^-\otimes_{\sch}N\to N$ is an isomorphism of $\sch$-modules.
  \end{enumerate}
\end{theorem}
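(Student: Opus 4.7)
The plan is to establish $(\ref{item:3})\Leftrightarrow(\ref{item:4})$ by unpacking definitions, $(\ref{item:5})\Rightarrow(\ref{item:3})$ by applying the hypothesis to a single well-chosen object, and $(\ref{item:3})\Rightarrow(\ref{item:5})$ by constructing an explicit two-sided inverse to $\theta$. Since $\eta_N = \phi\otimes_{\sch}\id_N$ by definition of $\eta$ in \eqref{eq:4}, the implication $(\ref{item:3})\Rightarrow(\ref{item:4})$ is immediate from the fact that $-\otimes_{\sch} N$ preserves isomorphisms, and the converse follows by specializing to $N=\sch$, where the canonical identification $\sch^-\otimes_{\sch}\sch^-\otimes_{\sch}\sch = \sch^-\otimes_{\sch}\sch^-$ identifies $\eta_{\sch}$ with $\phi$ itself.

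For $(\ref{item:5})\Rightarrow(\ref{item:3})$, apply Theorem~\ref{thm:iso} to the left regular $\asch$-module $\asch$ to produce an object $(\asch,\theta_\asch)\in\mod{(\sch,\phi)}$. Since $\asch = \sch\oplus \sch^-$ as a left $\sch$-module,
\begin{displaymath}
  D(\asch) = \bigl(\sch^-\otimes_{\sch}\sch\bigr) \oplus \bigl(\sch^-\otimes_{\sch}\sch^-\bigr),
\end{displaymath}
and tracing through the formula $\theta_\asch(\alpha\otimes x) = \alpha x$ identifies the restriction of $\theta_\asch$ to the first summand with the canonical isomorphism $\sch^-\otimes_{\sch}\sch\cong \sch^-$, and its restriction to the second summand with $\phi$. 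Thus $\theta_\asch = \id_{\sch^-}\oplus\phi$, and assuming $(\ref{item:5})$ it is an isomorphism, which forces $\phi$ to be an isomorphism.

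For $(\ref{item:3})\Rightarrow(\ref{item:5})$, let $(N,\theta)$ lie in $\mod{(\sch,\phi)}$. The compatibility diagram \eqref{eq:compatibility} reads $\theta\circ D(\theta) = \eta_N$, which is an isomorphism by $(\ref{item:4})$. Set $\sigma := D(\theta)\circ \eta_N^{-1}:N\to D(N)$; then $\theta\circ\sigma = \theta\circ D(\theta)\circ \eta_N^{-1} = \id_N$ is immediate. For the reverse composition, naturality of $\eta$ applied at the $\sch$-module morphism $\theta:D(N)\to N$ gives $\eta_N\circ D^2(\theta) = \theta\circ\eta_{D(N)}$, and hence
\begin{displaymath}
  \sigma\circ\theta = D(\theta)\circ\eta_N^{-1}\circ\theta = D(\theta)\circ D^2(\theta)\circ \eta_{D(N)}^{-1} = D\bigl(\theta\circ D(\theta)\bigr)\circ \eta_{D(N)}^{-1} = D(\eta_N)\circ \eta_{D(N)}^{-1}.
\end{displaymath}

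The one genuinely nontrivial step, and the main obstacle, is verifying the identity $D(\eta_N) = \eta_{D(N)}$ needed to conclude $\sigma\circ\theta=\id_{D(N)}$ in the display above. This is not a formal consequence of $\eta$ being a natural transformation: unwinding the definitions yields $D(\eta_N) = (\id_{\sch^-}\otimes_{\sch}\phi)\otimes_{\sch}\id_N$ and $\eta_{D(N)} = (\phi\otimes_{\sch}\id_{\sch^-})\otimes_{\sch}\id_N$, so the equality reduces to showing that the two maps $\id_{\sch^-}\otimes_{\sch}\phi$ and $\phi\otimes_{\sch}\id_{\sch^-}$ from $\sch^-\otimes_{\sch}\sch^-\otimes_{\sch}\sch^-$ to $\sch^-$ coincide. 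On a simple tensor $\alpha\otimes\beta\otimes\gamma$ these compute $(\alpha\beta)\cdot\gamma$ and $\alpha\cdot(\beta\gamma)$ respectively, where $\alpha\beta,\beta\gamma\in\sch$ act on $\sch^-$ via the bimodule structure; both expressions equal the triple product $\alpha\beta\gamma$ in $\asch$, which is unambiguous by associativity of multiplication in $\asch$. This resolves the obstacle and completes the argument.
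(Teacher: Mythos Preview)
Your proof is correct. The equivalence $(\ref{item:3})\Leftrightarrow(\ref{item:4})$ matches the paper's argument exactly.

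For the remaining implications your route differs from the paper's in two places. For $(\ref{item:5})\Rightarrow(\ref{item:3})$, the paper simply says ``take $N=\sch$,'' which is terse to the point of being unclear; your choice $N=\asch$ with the decomposition $\theta_{\asch}=\id_{\sch^-}\oplus\phi$ makes the implication transparent and is arguably the right way to flesh out what the paper intends. For $(\ref{item:3})\Rightarrow(\ref{item:5})$, the paper takes a shorter path: from $\theta\circ D(\theta)=\eta_N$ being an isomorphism it extracts that $\theta$ is epi and $D(\theta)$ is mono; right-exactness of $\sch^-\otimes_{\sch}-$ applied to the epi $\theta$ makes $D(\theta)$ also epi, hence an isomorphism in the abelian category $\mod{\sch}$, and then $\theta=\eta_N\circ D(\theta)^{-1}$. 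This bypasses entirely the identity $D(\eta_N)=\eta_{D(N)}$ that you isolate as the ``main obstacle.'' Your approach is more explicit---it actually writes down the inverse and checks both composites---but the price is the associativity verification; the paper's approach is slicker but leans on the abelian-category fact that mono $+$ epi $=$ iso. Both are perfectly valid.
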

\begin{proof}
  To see that \eqref{item:3} implies \eqref{item:4}, observe from the diagram \eqref{eq:4} that if $\phi$ is an isomorphism, then $\eta_N$ is an isomorphism for every $N$.
  It follows that $\eta$ is a natural isomorphism.
  For the converse, taking $N=\sch$, the commutativity of \eqref{eq:4} shows that $\phi$ is an isomorphism.

  To see that \eqref{item:3} implies \eqref{item:5}, note that the commutativity of \eqref{eq:2} implies that, if $\phi$ is an isomorphism, then $\theta$ is an epimorphism, and $\id_{\sch^-}\otimes \theta$ is a monomorphism.
  Since tensoring is a right-exact functor, it follows that $\id_{\sch^-}\otimes \theta$ is also an epimorphism, hence an isomorphism.
  Since $\phi\otimes \id_N$ is also an isomorphism the inverse of $\theta$ can be constructed by reversing the arrows in \eqref{eq:2}.
  For the converse, just take $N=\sch$ in \eqref{item:5}.
\end{proof}
\subsection{Abstract Ringel duality}
Let $\sch^-$ be an $(\sch,\sch)$-bimodule. Denote the left $\sch$-module $\sch^-$ by $_{\sch}\sch^-$.
For a left $\sch$-module $M$, the homomorphism space $\Hom_{\sch}(_{\sch}\sch^-,M)$ inherits the structure of a left $\sch$-module from the right $\sch$-module structure on $\sch^-$.
Motivated by \cite[Section~6]{MR1128706}, we call the functor
\begin{displaymath}
  \Hom_{\sch}(_{\sch}\sch^-,-):\mod{\sch}\to \mod{\sch}
\end{displaymath}
the abstract Ringel duality functor on $\mod{\sch}$.
It is clear that the abstract Koszul duality functor is the left adjoint of abstract Ringel duality functor.
\subsection{Abstract simple modules}
\label{sec:abs-simp}
In general, it is not clear how simple $\asch$-modules can be classified using simple $\sch$-modules and Koszul duality.
In this section, we give some results in this direction.
These are enough to give a complete solution in the semisimple case.

Let $M$ be a simple $\sch$-module. We consider the following cases:
\subsubsection{$DM$ is isomorphic to $M$}
\label{sec:m-simple-dm}
If $\eta_M:D^2M\to M$ is zero, then $(M,0)$ (where $0$ is the zero map from $DM\to M$) is the unique $\phi$-compatible morphism.
Otherwise, any non-zero morphism $\theta:DM\to M$ is an isomorphism.
Schur's lemma implies that $\theta\circ D\theta=a\eta_M$ for some $a\in (\End_{\sch} M)^*$ (the multiplicative group of non-zero elements in the division algebra $\End_{\sch} M$).
If $a$ has a square root in $(\End_{\sch} M)^*$, then $\theta$ can be normalized to make it a $\phi$-compatible morphism.
Moreover, after normalization, $\pm \theta$ are two $\phi$-compatible morphisms, leading to two non-isomorphic $\asch$-modules.
Also, in this case, $(M,\pm\theta)$ are simple, because their restrictions to $\sch$ are simple.
On the other hand, if $a$ does not have a square root in $\End_{\sch}(M)$, then there is no simple $\asch$-module whose restriction to $\sch$ is isomorphic to $M$.
\subsubsection{$DM=0$}
\label{sec:m-simple-dm=0}
In this case $(M,0)$ is the unique $\asch$-module whose restriction to $\sch$ is isomorphic to $M$.
\subsubsection{$DM$ is simple, but not isomorphic to $M$, $\eta_M\neq 0$}
\label{sec:m-simple-dm-1}
Let $\tilde M = DM\oplus M$.
We have $D\tilde M = D^2M\oplus DM$.
Any morphism $\theta:D\tilde M\to \tilde M$ can be written in matrix form as
\begin{displaymath}
  \theta =
  \begin{pmatrix}
    X & Y \\ Z & W
  \end{pmatrix},
\end{displaymath}
where $X:D^2M\to DM$, $Y: DM\to DM$, $Z:D^2M\to M$, and $W:DM\to M$.
By Schur's lemma, $W=0$.
The compatibility of $\theta$ with $\phi$ becomes:
\begin{displaymath}
  \begin{pmatrix}
    X & Y\\
    Z & 0
  \end{pmatrix}
  \begin{pmatrix}
    DX & DY\\
    DZ & 0
  \end{pmatrix}
  =
  \begin{pmatrix}
    \eta_{DM} & 0\\
    0 & \eta_M
  \end{pmatrix}.
\end{displaymath}
Multiplying out the left hand side gives:
\begin{displaymath}
  \begin{pmatrix}
    XDX + YDZ & XDY\\
    ZDX & ZDY
  \end{pmatrix}
  =
  \begin{pmatrix}
    \eta_{DM} & 0\\
    0 & \eta_M
  \end{pmatrix}.
\end{displaymath}
Since $\eta_M\neq 0$, $DY\neq 0$, and so $Y\neq 0$.
Since $DM$ is simple, by Schur's lemma, $Y$ is invertible.
Since $D$ is a functor, $DY$ is also invertible.
Hence, equality of top right entries implies that $X=0$.
Moreover, $Z=\eta_M DY^{-1}$.
In other words, $\theta$ is of the form:
\begin{displaymath}
  \theta_Y =
  \begin{pmatrix}
    0 & Y\\
    \eta_MDY^{-1} & 0
  \end{pmatrix}.
\end{displaymath}
\begin{lemma}
  \label{lemma:YYprime}
  For all $Y, Y'\in (\End_{\sch}M)^*$, $\Hom_{\asch}((\tilde M, \theta_Y), (\tilde M, \theta_{Y'}))$ is non-zero, and $\End_{\asch}(\tilde M, \theta_Y)$ is a division ring.
\end{lemma}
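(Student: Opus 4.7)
The plan is to analyze $\Hom_{\asch}((\tilde M, \theta_Y),(\tilde M, \theta_{Y'}))$ by writing an arbitrary $\sch$-linear map $f : \tilde M \to \tilde M$ as a $2\times 2$ block matrix with respect to the decomposition $\tilde M = DM \oplus M$ and then imposing the intertwining condition $f \circ \theta_Y = \theta_{Y'} \circ Df$ coming from the compatibility diagram \eqref{eq:2}.

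First I would write $f = \begin{pmatrix} A & B \\ C & D_0 \end{pmatrix}$ with $A:DM\to DM$, $B:M\to DM$, $C:DM\to M$, $D_0:M\to M$. Since $DM$ and $M$ are non-isomorphic simple $\sch$-modules, Schur's lemma forces $B=0=C$, so $f$ is block-diagonal with $A\in \End_{\sch}(DM)$ and $D_0\in \End_{\sch}(M)$. A direct block computation of $f\,\theta_Y = \theta_{Y'}\,Df$ then reduces the compatibility to the two equations
\begin{align*}
AY &= Y'\cdot DD_0,\\
D_0\,\eta_M\,(DY)^{-1} &= \eta_M\,(DY')^{-1}\,DA.
\end{align*}
The first solves uniquely for $A = Y'(DD_0)Y^{-1}$. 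Substituting this into the second and using functoriality $DA = DY'\cdot D^2D_0\cdot (DY)^{-1}$ to cancel the factors of $DY'$ and $(DY)^{-1}$ reduces the second equation to $D_0\,\eta_M = \eta_M\,D^2 D_0$, which is exactly the naturality of $\eta:D^2\to \id$ applied to $D_0$.

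I expect this observation — that the second compatibility equation collapses to naturality of $\eta$ — to be the only substantive point; everything else is purely formal. Once it is in place, the assignment
\begin{displaymath}
D_0 \longmapsto \begin{pmatrix} Y'(DD_0)Y^{-1} & 0\\ 0 & D_0 \end{pmatrix}
\end{displaymath}
gives a bijection $\End_{\sch}(M) \to \Hom_{\asch}((\tilde M,\theta_Y),(\tilde M,\theta_{Y'}))$. Taking $D_0 = \id_M$ produces a non-zero element, proving the first claim. For the second claim, specialize to $Y'=Y$: then the same assignment is easily checked to be multiplicative in $D_0$ using $D(D_0 D_0') = DD_0\cdot DD_0'$, hence a ring isomorphism $\End_{\sch}(M) \cong \End_{\asch}(\tilde M,\theta_Y)$. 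Since $M$ is simple over $\sch$, Schur's lemma makes the left side a division ring, and the second claim follows.
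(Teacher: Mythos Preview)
Your argument is correct and follows essentially the same route as the paper: write an $\sch$-morphism $\tilde M\to\tilde M$ as a $2\times 2$ block matrix, kill the off-diagonal blocks with Schur's lemma, and read off the two scalar equations from $f\theta_Y=\theta_{Y'}Df$. The paper then simply exhibits the pair $W=\id_M$, $X=Y'Y^{-1}$ for the first claim, and for the second observes that when $Y=Y'$ the relation $X=Y\,DW\,Y^{-1}$ forces $X$ and $W$ to be simultaneously zero or simultaneously invertible.

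Your write-up is slightly more conceptual and actually proves more: by recognizing that the second compatibility equation collapses to the naturality square $D_0\,\eta_M=\eta_M\,D^2D_0$ (hence holds for \emph{every} $D_0$), you obtain a bijection $\End_{\sch}(M)\to\Hom_{\asch}((\tilde M,\theta_Y),(\tilde M,\theta_{Y'}))$, and, when $Y=Y'$, a ring isomorphism with $\End_{\sch}(M)$. The paper does not make this naturality observation explicit; it simply checks that its particular choice works. Both approaches are short, but yours explains \emph{why} the second equation imposes no extra constraint and yields the stronger conclusion that $\End_{\asch}(\tilde M,\theta_Y)\cong\End_{\sch}(M)$ as rings.
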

\begin{proof}
  Any $\asch$-module morphism $(\tilde M, \theta_y)\to (\tilde M, \theta_{y'})$ can be written in matrix form as
  \begin{displaymath}
    \begin{pmatrix}
      X & 0\\
      0 & W
    \end{pmatrix},
    \text{ where $X\in \End_{\sch}DM$ and $W\in \End_{\sch}M$},
  \end{displaymath}
  and must satisfy:
  \begin{displaymath}
    \begin{pmatrix}
      X & 0\\
      0 & W
    \end{pmatrix}
    \begin{pmatrix}
      0 & Y\\
      \eta_MDY^{-1} & 0
    \end{pmatrix}
    =
    \begin{pmatrix}
      0 & Y'\\
      \eta_MDY^{\prime-1} & 0
    \end{pmatrix}
    \begin{pmatrix}
      DX & 0\\
      0 & DW
    \end{pmatrix}
  \end{displaymath}
  We get $XY=Y'DW$, and $W\eta_MDY^{-1}=\eta_MDY^{\prime-1}DX$.
  Taking $W=\id_M$, and $X=Y'Y^{-1}$ gives a non-zero element of $\Hom_{\asch}((\tilde M, \theta_Y), (\tilde M, \theta_{Y'}))$.
  When $Y=Y'$, then we have $X=YDWY^{-1}$, so that $X$ is non-zero (and hence invertible) if and only if $W$ is.
  It follows that every non-zero element of $\End_{\asch}(\tilde M, \theta_Y)$ is invertible.
\end{proof}
\begin{lemma}
  \label{lemma:astensor}
  Let $M$ be an $\sch$-module.
  Then the $\asch$-module $\asch\otimes_{\sch} M$ is isomorphic to $(DM\oplus M, \theta)$, where $\theta$ is given by the matrix:
  \begin{displaymath}
    \theta =
    \begin{pmatrix}
      0 & \id_{DM}\\
      \eta_M & 0
    \end{pmatrix}.
  \end{displaymath}
\end{lemma}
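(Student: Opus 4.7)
The plan is to compute the pair $(N, \theta_N)$ associated to the induced module $N = \asch \otimes_{\sch} M$ under the isomorphism of Theorem~\ref{thm:iso}, and identify it with the claimed $(DM \oplus M, \theta)$.

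First, I would decompose $N$ as an $\sch$-module by exploiting the $\ZZ/2\ZZ$-grading $\asch = \sch^- \oplus \sch$ (writing the summands in this order to match the target). Since $\sch \otimes_{\sch} M \cong M$ and $\sch^- \otimes_{\sch} M = DM$, tensoring with $M$ over $\sch$ yields an $\sch$-module isomorphism
\begin{displaymath}
  \asch \otimes_{\sch} M \;=\; (\sch^- \oplus \sch) \otimes_{\sch} M \;\cong\; DM \oplus M.
\end{displaymath}

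Second, I would compute $\theta_N$ explicitly. By definition, $\theta_N : \sch^- \otimes_{\sch} N \to N$ is the restriction of the $\asch$-action on $N$ to the odd graded part $\sch^- \subset \asch$. Using the same grading on the middle factor gives
\begin{displaymath}
  D(N) \;=\; \sch^- \otimes_{\sch} (\sch^- \oplus \sch) \otimes_{\sch} M \;\cong\; D^2 M \oplus DM,
\end{displaymath}
and the multiplication map $\sch^- \otimes_{\sch} \asch \to \asch$ splits into two components: the bimodule homomorphism $\phi : \sch^- \otimes_{\sch} \sch^- \to \sch$ on the $\sch^-\otimes_{\sch}\sch^-$ summand, and the right $\sch$-action $\sch^- \otimes_{\sch} \sch \to \sch^-$ (which is just the canonical isomorphism) on the other. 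After tensoring with $M$ on the right and tracking where each summand lands in $DM \oplus M$, the $\phi$-component sends $D^2 M$ into the $M$-summand via $\eta_M$ (by the definition~\eqref{eq:4} of $\eta$), while the identity component sends $DM$ isomorphically onto the $DM$-summand. This gives exactly the matrix
\begin{displaymath}
  \theta_N = \begin{pmatrix} 0 & \id_{DM}\\ \eta_M & 0\end{pmatrix} : D^2 M \oplus DM \to DM \oplus M.
\end{displaymath}

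Finally, I would invoke Theorem~\ref{thm:iso}: since $(N, \theta_N)$ is the image of $\asch \otimes_{\sch} M$ under the isomorphism of categories $\mod{\asch} \to \mod{(\sch,\phi)}$, and $(DM \oplus M, \theta)$ agrees with $(N, \theta_N)$ on both the underlying $\sch$-module and the compatible map, the two $\asch$-modules are isomorphic. The only thing to be careful about is bookkeeping of the order of summands and checking that the two components of the multiplication really do assemble into the off-diagonal matrix as stated; there is no genuine obstacle beyond this, since Theorem~\ref{thm:iso} already guarantees that $\theta_N$ is automatically $\phi$-compatible.
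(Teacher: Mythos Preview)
Your proposal is correct and follows essentially the same approach as the paper: decompose $\asch\otimes_{\sch}M$ via the grading as $DM\oplus M$, then identify the two components of the $\sch^-$-action as $\eta_M$ and $\id_{DM}$. The paper's proof is terser and does not explicitly invoke Theorem~\ref{thm:iso} at the end, but the underlying argument is the same.
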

\begin{proof}
  Note that
  \begin{displaymath}
    \asch\otimes_{\sch} M = (\sch^-\otimes_{\sch}M) \oplus (\sch\otimes_{\sch} M) = DM\oplus M.
  \end{displaymath}
  The map $\theta$ comes from the action of $\sch^-$ on this $\asch$-module, which gives $\eta_M:D^2M\to M$ on the first summand, and $\id_{DM}:DM\to DM$ on the second summand.
\end{proof}
\begin{theorem}
  \label{theorem:DMplusM}
  The $\asch$-module $(DM\oplus M, \theta_Y)$ defined above is isomorphic to $\asch\otimes_{\sch} M$ for every $Y\in (\End_{\sch}DM)^*$.
  Consequently, whenever $M$ and $DM$ are simple, non-isomorphic $\sch$-modules, and $\eta_M\neq 0$, then $\asch\otimes_{\sch} M$ is, up to isomorphism, the unique simple $\asch$-module whose restriction to $\sch$ contains $M$.
  
\end{theorem}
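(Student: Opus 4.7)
The plan is to deduce both assertions from Lemmas~\ref{lemma:astensor} and~\ref{lemma:YYprime} together with a direct analysis of the $\sch$-submodule lattice of $DM\oplus M$, and then close with Frobenius reciprocity.

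For the first assertion, Lemma~\ref{lemma:astensor} already identifies $\asch\otimes_{\sch} M$ with $(DM\oplus M,\theta_{\id_{DM}})$. To bridge an arbitrary invertible $Y$ with $\id_{DM}$, I invoke Lemma~\ref{lemma:YYprime} with $Y'=\id_{DM}$: the block-diagonal map with entries $X=Y^{-1}$ and $W=\id_M$ constructed in its proof is a morphism $(DM\oplus M,\theta_Y)\to(DM\oplus M,\theta_{\id_{DM}})$, and since both diagonal blocks are invertible it is an isomorphism of $\asch$-modules.

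For the uniqueness assertion, I first verify that $\asch\otimes_{\sch} M$ is itself a simple $\asch$-module. Since $M$ and $DM$ are simple and non-isomorphic as $\sch$-modules, the only $\sch$-submodules of $DM\oplus M$ are $0$, $M$, $DM$, and $DM\oplus M$, so it suffices to rule out $M$ and $DM$ as $\asch$-submodules. Reading off the action from the matrix
\begin{displaymath}
\theta_{\id_{DM}}=\begin{pmatrix}0 & \id_{DM}\\ \eta_M & 0\end{pmatrix}\colon D^2M\oplus DM\longrightarrow DM\oplus M,
\end{displaymath}
the off-diagonal entry $\id_{DM}$ sends $\sch^-\otimes_{\sch} M$ isomorphically onto the $DM$ summand of $DM\oplus M$, so the $\asch$-submodule generated by $M$ already contains $DM$; and the other off-diagonal entry $\eta_M$ is non-zero by hypothesis, so the $\asch$-submodule generated by $DM$ produces a non-zero, hence (by simplicity of $M$) full, copy of $M$. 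Thus $\asch\otimes_{\sch} M$ has no proper non-zero $\asch$-submodule.

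Finally, for any simple $\asch$-module $N$ whose restriction to $\sch$ contains $M$ as a submodule, Frobenius reciprocity yields
\begin{displaymath}
\Hom_{\asch}(\asch\otimes_{\sch} M,N)\;\cong\;\Hom_{\sch}(M,\mathrm{Res}^{\asch}_{\sch} N)\;\neq\;0,
\end{displaymath}
and any non-zero $\asch$-homomorphism between two simple $\asch$-modules is an isomorphism. The main obstacle is the simplicity argument: one must use both $DM\not\cong M$ (to enumerate the $\sch$-submodules) and $\eta_M\neq 0$ (to activate flow in the opposite direction), and verify that the interplay of the two off-diagonal entries of $\theta_{\id_{DM}}$ forbids any proper non-zero stable subspace. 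Everything else is either a direct appeal to Lemmas~\ref{lemma:astensor} and~\ref{lemma:YYprime} or the standard extension-of-scalars adjunction.
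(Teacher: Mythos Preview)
Your proof is correct and follows essentially the same route as the paper: identify $\asch\otimes_{\sch}M$ with $(\tilde M,\theta_{\id_{DM}})$ via Lemma~\ref{lemma:astensor}, use the explicit block-diagonal morphism from the proof of Lemma~\ref{lemma:YYprime} to pass between different $\theta_Y$, and verify simplicity by enumerating the $\sch$-submodules of $DM\oplus M$ and ruling out $M$ and $DM$ with the two off-diagonal entries of $\theta_{\id_{DM}}$. Your final uniqueness step via the induction--restriction adjunction is a clean and explicit substitute for the paper's terse ``now follows from Lemma~\ref{lemma:YYprime}'', which implicitly relies on the classification of compatible $\theta$ carried out just before the theorem.
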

\begin{proof}
  To see that $\asch\otimes_{\sch}M=DM\oplus M$ is simple, note that its only proper non-trivial $\sch$-submodules are $M$ and $DM$.
  But $M$ is not $\asch$-invariant because $\sch^-$ maps $M$ onto $DM$.
  Also, $DM$ is not $\asch$-invariant, because $\sch^-$ maps $DM$ onto $D^2M$.
  Since $\eta_M\neq 0$, $D^2M$ cannot be contained in $DM$.
  The theorem now follows from Lemma~\ref{lemma:YYprime}.
\end{proof}
\subsubsection{The case where $\phi$ is an isomorphism}
\label{sec:phi-iso}
When $\phi:\sch^-\otimes_{\sch} \sch^-\to \sch$ is an isomorphism the preceding results, using Theorem~\ref{theorem:phi-iso_con}, can be summarized in the following form:
\begin{theorem}
  \label{theorem:phi-iso}
  Suppose that $\asch$ is endowed with a $\mathbf Z/2\mathbf Z$-grading $\asch=\sch\oplus \sch^-$, and $\phi:\sch^-\otimes_{\sch} \sch^-\to \sch$ (as defined in Section~\ref{sec:alg}) is an isomorphism.
  Let $M$ be a simple $\sch$-module.
  Then
  \begin{enumerate}
  \item Suppose there exists an isomorphism $\theta:DM\to M$.
    Then $\theta$ can be scaled to become compatible with $\phi$.
    There exist at most two isomorphism classes of simple $\asch$-modules $(M,\pm\theta)$ whose restrictions to $\sch$ are isomorphic to $M$.
    If $(\End_{\sch} M)^*$ is a $2$-divisible group, then these two classes always exist.
  \item Otherwise, up to isomorphism, $\asch\otimes_{\sch} M$ is the unique simple $\asch$-module whose restriction to $\sch$ contains $M$ as a submodule.
    Also, $\asch\otimes_{\sch}M$ and $\asch\otimes_{\sch} DM$ are isomorphic as $\asch$-modules.
  \end{enumerate}
\end{theorem}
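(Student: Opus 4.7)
The plan is to apply Theorem~\ref{theorem:phi-iso_con} to reduce the statement to the case analysis already carried out in Section~\ref{sec:abs-simp}. Since $\phi$ is an isomorphism, the natural transformation $\eta:D^2\to \id_{\mod{\sch}}$ is a natural isomorphism, so $D$ is an auto-equivalence of $\mod{\sch}$ and is its own quasi-inverse. In particular, for any simple $\sch$-module $M$, the module $DM$ is again simple and nonzero, and $\eta_M$ is an isomorphism. This rules out the case $DM=0$ of Section~\ref{sec:m-simple-dm=0} and leaves exactly the two cases treated in Sections~\ref{sec:m-simple-dm} and~\ref{sec:m-simple-dm-1} (with the hypothesis $\eta_M\neq 0$ now automatic).

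For Part~(1), suppose $\theta:DM\to M$ is an isomorphism of $\sch$-modules. Schur's lemma applied to $\theta\circ D(\theta)\circ \eta_M^{-1}\in \End_\sch(M)$ yields an element $a\in(\End_\sch M)^*$ with $\theta\circ D(\theta)=a\cdot\eta_M$. Under the 2-divisibility hypothesis, $a$ has a square root $b\in(\End_\sch M)^*$, and a direct computation using the functoriality of $D$ shows that $b^{-1}\theta$ satisfies the compatibility diagram~\eqref{eq:compatibility}; by Theorem~\ref{thm:iso}, this promotes $M$ to an $\asch$-module, which is simple because its restriction to $\sch$ is simple. The same is true for $-b^{-1}\theta$, giving a second, non-isomorphic $\asch$-module. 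The bound of at most two isomorphism classes follows by Schur's lemma: any $\phi$-compatible isomorphism $DM\to M$ must be a scalar multiple of $b^{-1}\theta$, and the quadratic compatibility equation forces that scalar to be $\pm 1$.

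For Part~(2), suppose $DM\not\cong M$. Lemma~\ref{lemma:astensor} identifies $\asch\otimes_\sch M$ with $(DM\oplus M,\theta)$ as an $\asch$-module, and Theorem~\ref{theorem:DMplusM} simultaneously establishes that this module is simple and that every $(DM\oplus M,\theta_Y)$ is isomorphic to it (the simplicity relies on $\eta_M\neq 0$ to prevent either summand from being $\asch$-stable, since $\sch^-$ swaps the summands via an iso and $\eta_M$). For uniqueness, any simple $\asch$-module $N$ whose restriction to $\sch$ contains $M$ admits a nonzero $\sch$-homomorphism $M\to N$, which by the adjunction between restriction and extension of scalars corresponds to a nonzero $\asch$-homomorphism $\asch\otimes_\sch M\to N$; this map is surjective because $N$ is simple and injective because $\asch\otimes_\sch M$ is simple, hence an isomorphism. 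The final assertion $\asch\otimes_\sch DM\cong \asch\otimes_\sch M$ follows by applying Lemma~\ref{lemma:astensor} with $DM$ in place of $M$ and then using the natural isomorphism $\eta_M:D^2M\to M$ together with a swap of summands.

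The main obstacle is the normalization step in Part~(1): one must track the interaction of the scalar $b\in(\End_\sch M)^*$ with the functor $D$, since $\End_\sch(M)$ may be a non-commutative division ring and the assignment $c\mapsto \theta\circ D(c)\circ\theta^{-1}$ is only a ring endomorphism $\sigma$ of $\End_\sch(M)$, not necessarily the identity. The 2-divisibility hypothesis is precisely what allows the equation $c\cdot\sigma(c)=a^{-1}$ to be solved uniformly, independent of the precise nature of $\sigma$; a sharper criterion could presumably be extracted when $\End_\sch(M)$ is commutative.
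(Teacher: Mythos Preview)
Your approach is essentially the paper's: the paper gives no detailed proof, merely stating that the theorem summarizes the case analysis of Section~\ref{sec:abs-simp} via Theorem~\ref{theorem:phi-iso_con}. You have correctly filled in those details, and your adjunction argument for uniqueness in Part~(2) is a clean alternative to invoking Theorem~\ref{theorem:DMplusM} directly.

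There is one residual gap in your treatment of Part~(1), which you partly acknowledge. Replacing $\theta$ by $c\theta$, compatibility becomes $c\,\sigma(c)=a^{-1}$ where $\sigma(c)=\theta\circ D(c)\circ\theta^{-1}$. Your assertion that $2$-divisibility of $(\End_\sch M)^*$ solves this ``uniformly, independent of the precise nature of $\sigma$'' is not justified: for instance, if $\End_\sch M=\mathbb C$ over $F=\mathbb R$ and $\sigma$ is complex conjugation, then $c\,\sigma(c)=|c|^2>0$, so $a^{-1}<0$ is unattainable despite $\mathbb C^*$ being $2$-divisible. What actually makes the naive normalization $c=b^{-1}$ (with $b^2=a$) work is the additional condition $\sigma(b)=b$; in particular, everything is fine once $\End_\sch M=F$ and $D$ is $F$-linear, since then $\sigma=\id$. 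The paper's own discussion in Section~\ref{sec:m-simple-dm} has the same lacuna (it writes ``if $a$ has a square root\ldots then $\theta$ can be normalized'' without tracking $\sigma$), so you are not worse off than the original---but your final paragraph overstates what $2$-divisibility alone buys.
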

\begin{corollary}
  Suppose $F$ is an algebraically closed field of characteristic different from $2$.
  Let $\asch=\sch\oplus \sch^-$ be a $\ZZ/2\ZZ$-graded $F$-algebra.
  A complete set of isomorphism classes of simple $\asch$-modules is given by:
  \begin{enumerate}
    \setcounter{enumi}{2}
  \item $(M,\pm\theta)$ (defined in Section~\ref{sec:m-simple-dm}), as $M$ runs over isomorphism classes of simple $\sch$-modules such that $DM$ is isomorphic to $M$,
  \item $\asch\otimes_{\sch}M$, as $M$ runs over isomorphism classes of all unordered pairs $\{M,M'\}$ of non-isomorphic mutually dual simple $\sch$-modules.
  \end{enumerate}
\end{corollary}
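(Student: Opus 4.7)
The plan is to derive the corollary from Theorem~\ref{theorem:phi-iso}, whose hypothesis (that $\phi$ is an isomorphism) is in force throughout Section~\ref{sec:phi-iso}, as signalled by the continuation of the enumeration. Algebraic closure of $F$ enters in two ways: Schur's lemma gives $\End_\sch(M)=F$ for every simple $\sch$-module $M$, so the scalar $a\in(\End_\sch M)^*$ of Section~\ref{sec:m-simple-dm} always admits a square root in $F^*$, making the normalization step available; and $\mathrm{char}\,F\neq 2$ keeps $\pm\theta$ distinct. By Theorem~\ref{theorem:phi-iso_con}, $D$ is an equivalence on $\mod\sch$, so $D$ sends simples to simples and $\eta_M\neq 0$ for every simple $M$; in particular, the degenerate case $DM=0$ of Section~\ref{sec:m-simple-dm=0} does not occur.

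For existence, every simple $M$ with $DM\cong M$ yields, via Section~\ref{sec:m-simple-dm} together with the normalization above, the two nonisomorphic simple $\asch$-modules $(M,\pm\theta)$ of case (c); and every simple $M$ with $DM\not\cong M$ yields the simple $\asch$-module $\asch\otimes_\sch M$ of case (d) via Theorem~\ref{theorem:phi-iso}(b), which moreover identifies $\asch\otimes_\sch M$ with $\asch\otimes_\sch DM$ and hence justifies the indexing of (d) by unordered pairs $\{M,DM\}$.

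For completeness, let $N$ be any simple $\asch$-module and choose a simple $\sch$-submodule $M\subseteq N|_\sch$. By adjointness of $\asch\otimes_\sch(-)$ to restriction, the inclusion $M\hookrightarrow N|_\sch$ furnishes a nonzero, hence surjective, $\asch$-homomorphism $\asch\otimes_\sch M\twoheadrightarrow N$; by Lemma~\ref{lemma:astensor}, $\asch\otimes_\sch M=DM\oplus M$ as $\sch$-modules, with $\asch$-structure determined by the matrix there. If $DM\not\cong M$, the argument in the proof of Theorem~\ref{theorem:DMplusM} shows $\asch\otimes_\sch M$ is already simple, so $N\cong\asch\otimes_\sch M$. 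If $DM\cong M$, the summands $DM$ and $M$ of $DM\oplus M\cong M\oplus M$ are not $\asch$-invariant (since $\eta_M\neq 0$), and the $\asch$-invariant $\sch$-lines are precisely the two diagonal copies corresponding to the $\pm\theta$ of Section~\ref{sec:m-simple-dm}, so $N$ is one of the modules in (c). Disjointness of (c) and (d) is immediate, since the modules in (c) are simple as $\sch$-modules whereas those in (d) are not.

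The main obstacle is the $DM\cong M$ branch of the completeness argument: one must identify the $\asch$-invariant $\sch$-lines in $M\oplus M$ as exactly the two normalizations of $\theta$. The mechanism is that rescaling a nonzero $\theta:DM\to M$ by $c\in F^*$ rescales the scalar $a$ in $\theta\circ D\theta=a\,\eta_M$ by $c^2$, so $\phi$-compatibility (which after normalization demands $a=1$) pins down $c$ up to sign; this is precisely where both $F=\bar F$ (for the square root) and $\mathrm{char}\,F\neq 2$ (to keep $\pm$ distinct) are essential.
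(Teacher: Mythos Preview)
The paper does not supply a separate proof of this corollary; it is recorded immediately after Theorem~\ref{theorem:phi-iso} as a direct consequence, with the implicit understanding that algebraic closure makes $(\End_{\sch}M)^*=F^*$ two-divisible, so both parts of that theorem apply cleanly. Your proposal correctly unpacks this: you identify where algebraic closure and $\mathrm{char}\,F\neq 2$ enter, use Theorem~\ref{theorem:phi-iso_con} to rule out the degenerate cases $DM=0$ and $\eta_M=0$, and supply the completeness argument via the induction--restriction adjunction, which the paper leaves entirely to the reader.

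One small expository wrinkle in your $DM\cong M$ branch: you phrase the argument in terms of the $\asch$-invariant $\sch$-lines \emph{inside} $\asch\otimes_{\sch}M$, whereas $N$ is a \emph{quotient}. The cleanest route is to note that, since $\asch\otimes_{\sch}M$ is not simple (it surjects onto $(M,\theta)$ with nonzero kernel by a dimension count), the simple quotient $N$ has $N|_{\sch}\cong M$; then Theorem~\ref{thm:iso} forces $N\cong(M,\theta_N)$ for a $\phi$-compatible $\theta_N$, and your scaling analysis shows there are exactly two such. This avoids having to match submodules with quotients. Apart from that, your argument is sound and is exactly the fleshing-out the paper invites.
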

\section{Koszul duality for modules over Schur algebra}
\label{sec:koszul-duality-schur}
In this section, let $\sch$ denote the Schur algebra $\sch_F(n,d)$, and let $\sch^-$ denote the $(\sch,\sch)$-bimodule $\sch^-_F(n,d)$.
We now use our combinatorial methods from Section~\ref{sec:altern-schur-algebra} to determine when abstract Koszul duality is an equivalence.
\begin{lemma}
  \label{lemma:zetazeta}
  When the characteristic of $F$ is $0$ or greater than $d$, and $n\geq d$, the map $\phi:\sch^-\otimes_{\sch} \sch^-\to \sch$ is
  an isomorphism.
\end{lemma}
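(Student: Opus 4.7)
The plan is to prove the isomorphism in two steps: first exhibit an explicit preimage of $1_\sch$ to get surjectivity, then match dimensions via the semisimple structure guaranteed by the characteristic hypothesis to upgrade to an isomorphism.

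For surjectivity I would set
\[
  u \;=\; \sum_{\lambda\in\Lambda(n,d)} \tfrac{1}{\lambda!}\,\zeta_{\Gamma_\lambda}\otimes\zeta_{\Gamma_\lambda^*}
  \;\in\;\sch^-\otimes_{\sch}\sch^-,
\]
where $\lambda! = \lambda_1!\cdots\lambda_n!$. The characteristic hypothesis makes each $\lambda!$ invertible since $\lambda_i\leq d$. Applying $\phi$ and using Example~\ref{example:zetazeta} (which gives $\zeta_{\Gamma_\lambda}\zeta_{\Gamma_\lambda^*} = \lambda!\,\xi_{\Gamma^0_\lambda}$, valid because $n\geq d$), one obtains $\phi(u) = \sum_{\lambda\in\Lambda(n,d)}\xi_{\Gamma^0_\lambda}$. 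A direct unwinding of Theorem~\ref{thm:basis_S} shows that $\xi_{\Gamma^0_\lambda}f(S) = f(S)$ if the content of $S$ equals $\lambda$ and is $0$ otherwise, so $\{\xi_{\Gamma^0_\lambda}\}_{\lambda\in\Lambda(n,d)}$ is a family of orthogonal idempotents summing to $1_\sch$. Hence $\phi(u) = 1_\sch$, and because $\phi$ is an $(\sch,\sch)$-bimodule map, $\sch = \sch\cdot 1_\sch\cdot\sch \subseteq \mathrm{im}(\phi)$, giving surjectivity.

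For injectivity I would match dimensions using semisimplicity. Under our hypotheses, Schur--Weyl duality yields $\sch\cong\bigoplus_{\lambda\vdash d}\End_F(V_\lambda)$ (every partition contributes because $n\geq d$), and since $W_\lambda\otimes\sgn\cong W_{\lambda'}$ for $\SG_d$-irreducibles one finds $\sch^-\cong\bigoplus_\lambda\Hom_F(V_{\lambda'},V_\lambda)$. Tensoring over $\sch$ block by block via the standard composition isomorphism $\Hom_F(V_{\lambda'},V_\lambda)\otimes_{\End_F(V_{\lambda'})}\Hom_F(V_\lambda,V_{\lambda'})\cong\End_F(V_\lambda)$ identifies $\sch^-\otimes_\sch\sch^-$ with $\sch$ and realises $\phi$ on each block as the composition map; thus the two spaces have equal finite dimension and the surjection $\phi$ is automatically an isomorphism.

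The main obstacle is the injectivity step: the surjectivity argument is purely combinatorial, but matching dimensions seems to require either Schur--Weyl duality or, equivalently, the semisimplicity of $\sch$ --- which is exactly what the characteristic hypothesis supplies. A purely combinatorial alternative would be to prove that $\psi:\sch\to\sch^-\otimes_\sch\sch^-$, $\psi(a) = a\cdot u$, is a two-sided inverse of $\phi$; the equation $\phi\circ\psi = \id_\sch$ is immediate from $\phi(u)=1_\sch$, but verifying $\psi\circ\phi = \id$ reduces to the identity $\phi(v)\cdot u = v$ on $\sch^-\otimes_\sch\sch^-$, whose combinatorial verification appears to encode the same semisimplicity input.
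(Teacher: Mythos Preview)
Your surjectivity argument is exactly the paper's: construct $u=\sum_\lambda (\lambda!)^{-1}\zeta_{\Gamma_\lambda}\otimes\zeta_{\Gamma_\lambda^*}$, use Example~\ref{example:zetazeta} to get $\phi(u)=\sum_\lambda\xi_{\Gamma^0_\lambda}=1_\sch$, and conclude.

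Your injectivity argument is correct but takes a genuinely different route. You invoke Schur--Weyl duality and the semisimple block decomposition $\sch\cong\bigoplus_\lambda\End_F(V_\lambda)$, $\sch^-\cong\bigoplus_\lambda\Hom_F(V_{\lambda'},V_\lambda)$, then identify $\phi$ blockwise with a Morita composition map. The paper instead gives the purely combinatorial dimension bound $\dim(\sch^-\otimes_\sch\sch^-)\leq |M(n,d)|$: using Lemma~\ref{lm:lrmodule} it rewrites every $\zeta_\Gamma\otimes\zeta_{\Gamma'}$ in the span of $\zeta_{\Gamma''}\otimes\zeta_{\Gamma'''}$ with $\Gamma''\in N^d$, $\Gamma'''\in N_d$ (graphs with upper, resp.\ lower, degree sequence $(1^d,0^{n-d})$), observes that these tensors are invariant under a diagonal $\SG_d$-action, and exhibits a bijection $N^d\times_{\SG_d}N_d\to M(n,d)$. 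What your approach buys is conceptual transparency: once the Wedderburn pieces are on the table, $\phi$ is visibly an isomorphism. What the paper's approach buys is that the dimension bound holds \emph{without} the characteristic hypothesis (only $n\geq d$ is used); the hypothesis enters solely to invert $\lambda!$ in the surjectivity step. This matches the paper's declared aim of a ``direct combinatorial proof'' and isolates precisely where the arithmetic obstruction lies. Your closing remarks correctly anticipate that a combinatorial alternative exists; the paper supplies one, though via a spanning-set bound rather than by exhibiting an explicit inverse $\psi$.
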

\begin{proof}
  For each $\lambda \in\Lambda(n,d)$, let $\Gamma^0_\lambda\in M(n,d)$ be the bipartite multigraph with $\lambda_i$ edges from $i'$ to $i$ (and no other edges), as in Example~\ref{example:zetazeta}.
  Then
  \begin{equation}
    \label{eq:id}
    \mathrm{id}_{\sch} = \sum_{\lambda\in \Lambda(n,d)} \xi_{\Gamma^0_\lambda}.
  \end{equation}
  Therefore by Example~\ref{example:zetazeta},
  \begin{equation}\label{eq:idem}
    \mathrm{id}_{S} = \sum_{\lambda\in \Lambda(n,d)} \frac 1{\lambda_1!\dotsb \lambda_n!} \zeta_{\Gamma_\lambda}\zeta_{\Gamma_\lambda}^*.
  \end{equation}
  Therefore the image of $\phi$, which is a two-sided ideal of $\sch$, contains the identity element, and therefore is all of $\sch$.

  The injectivity of $\phi$ can be proved using a dimension count.
  Let $N^d$ (resp., $N_d$) denote the graphs in $N(n,d)$ with upper (resp., lower) degree sequence $(1^d,0^{n-d})$.
  Let $\Gamma(w)\in N(n,d)$ be as in Example~\ref{example:permuations}.
  For $\Gamma\in N^d$, define $\Gamma \cdot w\in N^d$ by $\xi_{\Gamma}\xi_{\Gamma(w)} = \xi_{\Gamma\cdot w}$.
  Similarly, for $\Gamma\in N_d$, define $w\cdot\Gamma\in N_d$ by $\xi_{\Gamma(w)}\xi_\Gamma=\xi_{w\cdot\Gamma}$.
  Consider the equivalence relation on $N^d\times N_d$ where $(\Gamma,\Gamma')\sim(\Gamma\cdot w^{-1},w\cdot\Gamma')$ for $w\in \SG_d$.
  Let $N^d\times_{\SG_d} N_d$ denote the set of equivalence classes.

  Now, given $(\Gamma',\Gamma'')\in N^d\times N_d$, define $\Gamma = \Phi(\Gamma',\Gamma'')\in M(n,d)$ to be the graph for which the number of edges joining $(i',j)$ is the number of indices $1\leq k\leq n$ such that $(i',k)$ is an edge of $\Gamma''$ and $(k',j)$ is an edge of $\Gamma'$.
  This map induces an injective function $\bar \Phi:N^d\times_{\SG_d} N_d\to M(n,d)$.
  Moreover, $\Gamma=\Phi(U(\Gamma),D(\Gamma))$, so $\bar\Phi:N^d\times_{\SG_d} N_d\to M(n,d)$ is a bijection.

  The elements $\zeta_{\Gamma'}\otimes \zeta_{\Gamma''}$ as $\Gamma'$ and $\Gamma''$ run over $N(n,d)$, span $\sch^-\otimes_{\sch} \sch^-$.
  We have:
  \begin{align*}
   \zeta_{\Gamma}\otimes \zeta_{\Gamma'}&=\zeta_{U(\Gamma)}\xi_{D(\Gamma)}\otimes
   \xi_{U(\Gamma')}\zeta_{D(\Gamma')} & \text{(from Lemma }\ref{lm:lrmodule})\\
   &= \zeta_{U(\Gamma)}\otimes\xi_{D(\Gamma)}\xi_{U(\Gamma')}\zeta_{D(\Gamma')}
  \end{align*}
  Now $U(\Gamma)\in N^d$ and $\xi_{D(\Gamma)}\xi_{U(\Gamma')}\zeta_{D(\Gamma')}$ lies in the span of $\zeta_{\Gamma'''}$, for $\Gamma'''\in N_d$.
  Therefore $\zeta_{\Gamma'}\otimes \zeta_{\Gamma''}$ span $\sch^-\otimes_{\sch} \sch^-$ as $(\Gamma',\Gamma'')\in N^d\times N_d$.
  Moreover, $\zeta_{\Gamma'\cdot w}\otimes \zeta_{w^{-1}\cdot\Gamma''} = \zeta_{\Gamma'}\otimes \zeta_{\Gamma''}$, so $\dim \sch^-\otimes_{\sch} \sch^-\leq |N^d\times_{\SG_d}N_d|=|M(n,d)|$.
\end{proof}
Now, using Theorem~\ref{theorem:phi-iso_con} we have established a direct combinatorial proof of the following theorem:
\begin{theorem}
  \label{theorem:duality-is-equiv}
  For a field $F$ of characteristic $0$ or greater than $d$, and $n\geq d$, the Koszul duality functor $D:\mod \sch\to \mod \sch$ is an equivalence of categories.
\end{theorem}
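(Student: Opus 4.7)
My plan is to assemble the theorem directly from Lemma~\ref{lemma:zetazeta} and Theorem~\ref{theorem:phi-iso_con}. The combinatorial content is already packaged in the lemma, which under the hypothesis $\mathrm{char}(F)=0$ or $\mathrm{char}(F)>d$ and $n\geq d$ produces an isomorphism of $(\sch,\sch)$-bimodules $\phi:\sch^-\otimes_\sch\sch^-\to\sch$. Everything beyond that point is purely categorical.

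First, I would invoke Theorem~\ref{theorem:phi-iso_con}, specifically the implication from~\eqref{item:3} to~\eqref{item:4}: since $\phi$ is an isomorphism, the natural transformation $\eta:D^2\to\id_{\mod\sch}$ defined by~\eqref{eq:4} is a natural isomorphism. Explicitly, for every $\sch$-module $M$ the composite
\begin{displaymath}
  \sch^-\otimes_\sch\sch^-\otimes_\sch M\xrightarrow{\ \phi\otimes\id_M\ }\sch\otimes_\sch M=M
\end{displaymath}
is an isomorphism of $\sch$-modules, natural in $M$.

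Second, I would appeal to the general categorical fact that an endofunctor $D:\cat{C}\to\cat{C}$ admitting a natural isomorphism $D\circ D\cong\id_{\cat{C}}$ is an auto-equivalence of $\cat{C}$: the functor $D$ itself serves as a quasi-inverse, with both required composite isomorphisms furnished by $\eta$ (the triangle identities can be arranged by a standard rescaling argument). Specializing to $\cat{C}=\mod\sch$ completes the proof.

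I do not expect a new obstacle at this stage; all genuine work has been done inside Lemma~\ref{lemma:zetazeta}, whose surjectivity half relies crucially on the characteristic hypothesis (through the identity~\eqref{eq:idem} expressing $\id_\sch$ as a sum of $\zeta$-products) and whose injectivity half uses the bijection $\bar\Phi:N^d\times_{\SG_d} N_d\to M(n,d)$ enabled by Lemma~\ref{lm:lrmodule}. The theorem itself is thus a two-line formal consequence of the preceding lemma and the abstract Koszul duality machinery of Section~\ref{sec:duality}.
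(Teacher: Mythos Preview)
Your proposal is correct and follows exactly the paper's approach: the paper's proof consists of the single sentence ``Now, using Theorem~\ref{theorem:phi-iso_con} we have established a direct combinatorial proof of the following theorem,'' which is precisely your argument of combining Lemma~\ref{lemma:zetazeta} with the implication \eqref{item:3}$\Rightarrow$\eqref{item:4} of Theorem~\ref{theorem:phi-iso_con}. Your parenthetical about arranging triangle identities is unnecessary (an equivalence of categories only requires natural isomorphisms $D\circ D\cong\id$ in both directions, not coherence), but harmless.
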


\subsection{Strict polynomial functor}
\label{sec:strict-polyn-funct}
Friedlander and Suslin \cite{MR1427618} introduced strict polynomial functors in order to establish the finite generation of the
full cohomology ring of a finite group scheme. They also showed that the strict polynomial functors of degree $d$ unify modules over the Schur algebras $\sch_F(n,d)$ across all $n$. In this section, we briefly recall the definition of strict polynomial functors and some useful functors on the category of strict polynomial functors.

Following~\cite{MR3077659,vanderKallen2015}, define the \emph{Schur category} (also known as the \emph{divided power category}) $\mathbf \Gamma^d_{F}$ as the category whose objects are finite dimensional vector spaces over $F$.
For objects $V$ and $W$, the morphism space is:
\begin{displaymath}
\Hom_{{\bf \Gamma}^{d}_{F}}(V,W):=\Hom_{\SG_d}(V^{\otimes d},W^{\otimes d}).
\end{displaymath}
The category, $\Rep{\bf \Gamma}^{d}_{F}$
of strict polynomial functors is the functor category \linebreak $\text{Func}({\bf \Gamma}^{d}_{F}, \mod F)$. Thus it is an abelian, complete, and co-complete category.
\begin{example} Let $V$ and $W$ be objects of ${\bf \Gamma}^{d}_{F}$. Some examples of strict polynomial functors are:
  \begin{enumerate}
  \item The $d$th tensor power functor $\otimes^{d}:{\bf \Gamma}^{d}_{F}\to \mod F$. On objects, $\otimes^{d}(V)=V^{\otimes d}$. On the morphism space, the map
    \begin{displaymath}
     \Hom_{\SG_d}(V^{\otimes d},W^{\otimes d})\to \Hom_{\SG_d}(V^{\otimes d},W^{\otimes d})
    \end{displaymath}
    is the identity map.
  \item The $d$th divided power functor
    $\Gamma^{d}:{\bf \Gamma}^{d}_{F}\to \mod F$. On objects $\Gamma^{d}(V)=(V^{\otimes d})^{\SG_d}$ and on the morphism space, the map
    \begin{displaymath}
    \Hom_{{\bf \Gamma}^{d}_{F}}(V,W)\to \Hom_{\SG_d}((V^{\otimes d})^{\SG_d},(W^{\otimes d})^{\SG_d})
    \end{displaymath}
    is given by the restriction.
  \item Similarly, the $d$th exterior power functor $\wedge^{d}:{\bf \Gamma}^{d}_{F}\to \mod F$, and the $d$th symmetric power functor $\text{Sym}^{d}:{\bf \Gamma}^{d}_{F}\to \mod F$ are strict polynomial functors of degree $d$.
  \item \label{item:yoneda} Let $U$ be an object in ${\bf \Gamma}^d_F$. Then define ${\bf h}_{U}:{\bf \Gamma}^{d}_{F}\to \mod F$ as follows:
    \begin{displaymath}
      {\bf h}_{U}(W) = \Hom_{{\bf \Gamma}^{d}_{F}}(U, W) = \Hom_{\SG_d}(U^{\otimes d}, W^{\otimes d}).
    \end{displaymath}
    The functor ${\bf h}_{U}\in \Rep{\bf \Gamma}^{d}_{F}$ is called a \emph{representable functor}.
    The functor $\mathbf h:U\mapsto \mathbf h_U$ is the contravariant Yoneda embedding.
  \item
    For any object $U$ of $\mathbf \Gamma^d_F$ and any $X\in \Rep \mathbf \Gamma^d_F$, define a functor $X^U:\mathbf \Gamma^d_F\to \mod F$ by
    \begin{equation}
    \label{eq:para_strict}
      X^{U}(W)=X(\Hom_{F}(U,W)).
    \end{equation}
    When $X=\Gamma^d$, $X^U=h_U$.
\end{enumerate}
\end{example}
Given a strict polynomial functor $X$, $X(F^n)$ inherits the structure of an $\sch_F(n,d)$-module.
For every non-negative integer $n$, we have the evaluation functor\linebreak $\ev_n:\Rep{\bf \Gamma}^{d}_{F}\to \mod{\sch_F(n,d)}$ as:
\begin{displaymath}
  \ev_n(X)=X(F^n) \text{ for } X\in \Rep{\bf \Gamma}^{d}_{F}.
\end{displaymath}
\begin{theorem}\cite[Theorem 3.2]{MR1427618}
  \label{thm:FS}
  The functor $\ev_n:\Rep{\bf \Gamma}^{d}_{F}\to \mod{\sch_F(n,d)}$ is an equivalence of categories whenever
  $n\geq d$.
\end{theorem}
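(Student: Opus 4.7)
The plan is to apply the classical Morita-type equivalence for abelian categories: if $\cat{C}$ is a cocomplete abelian category and $P\in\cat{C}$ is a projective generator, then $\Hom_{\cat{C}}(P,-):\cat{C}\to\mod{\End_{\cat{C}}(P)^{\mathrm{op}}}$ is an equivalence. I would apply this with $\cat{C}=\Rep{\bf\Gamma}^{d}_{F}$ and $P={\bf h}_{F^n}$. By the Yoneda lemma, $\Hom_{\cat{C}}({\bf h}_{F^n},X)\cong X(F^n)=\ev_n(X)$ naturally in $X$, and $\End_{\cat{C}}({\bf h}_{F^n})^{\mathrm{op}}\cong\Hom_{{\bf\Gamma}^{d}_{F}}(F^n,F^n)=\sch_F(n,d)$ as rings, so this Morita equivalence specializes to the desired statement once its hypotheses are verified.

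The categorical hypotheses are routine: $\Rep{\bf\Gamma}^{d}_{F}$ is cocomplete and abelian because colimits, kernels, and cokernels are computed pointwise in $\mod F$; and ${\bf h}_{F^n}$ is projective because $\Hom({\bf h}_{F^n},-)=\ev_n$ is exact. The nontrivial step is to show that ${\bf h}_{F^n}$ is a generator when $n\geq d$. By the co-Yoneda (density) formula, every $X\in\Rep{\bf\Gamma}^{d}_{F}$ is a colimit of representables, so it suffices to exhibit each ${\bf h}_V$ as a direct summand of a finite power of ${\bf h}_{F^n}$. When $\dim V\leq n$, a split embedding $V\hookrightarrow F^n$ of vector spaces induces a split $\SG_d$-equivariant embedding $V^{\otimes d}\hookrightarrow(F^n)^{\otimes d}$, hence a split monomorphism $V\to F^n$ in ${\bf\Gamma}^{d}_{F}$; applying the (contravariant) Yoneda embedding produces a split epimorphism ${\bf h}_{F^n}\twoheadrightarrow{\bf h}_V$, so ${\bf h}_V$ is a direct summand of ${\bf h}_{F^n}$.

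The main obstacle is the case $\dim V>n$. For this, I would use that when $n\geq d$, the $\SG_d$-orbit of $e_1\otimes\cdots\otimes e_d\in(F^n)^{\otimes d}$ is free, so $(F^n)^{\otimes d}$ contains the regular $\SG_d$-representation $F[\SG_d]$ as a summand. This allows the identity of $V^{\otimes d}$ to be written as a finite sum $\sum_i\psi_i\circ\varphi_i$ with $\varphi_i:V^{\otimes d}\to(F^n)^{\otimes d}$ and $\psi_i:(F^n)^{\otimes d}\to V^{\otimes d}$ both $\SG_d$-equivariant: concretely, the weight-space decomposition of $V^{\otimes d}$ shows that on each $\SG_d$-orbit of standard basis vectors only at most $d\leq n$ coordinate directions of $V$ are engaged, so the corresponding summand factors through $F^n$. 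Translating such a factorization through the Yoneda embedding exhibits ${\bf h}_V$ as a direct summand of a finite power of ${\bf h}_{F^n}$. Once ${\bf h}_{F^n}$ is verified as a projective generator, the Morita equivalence delivers the desired equivalence $\ev_n:\Rep{\bf\Gamma}^{d}_{F}\xrightarrow{\,\sim\,}\mod{\sch_F(n,d)}$.
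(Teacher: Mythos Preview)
The paper does not supply its own proof of this statement; it is quoted directly from Friedlander--Suslin \cite[Theorem~3.2]{MR1427618}, so there is nothing in the paper to compare against.  Your Gabriel--Morita approach is correct and is in fact the standard modern route to this result (cf.\ Krause \cite{MR3077659}, who also notes that $\mathbf h_{F^n}$ is a projective generator of $\Rep\mathbf\Gamma^d_F$ when $n\geq d$).

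Two small points are worth tightening.  First, the Morita theorem you invoke requires $P$ to be a \emph{compact} (small) projective generator, not merely a projective generator; otherwise $\Hom_{\cat C}(P,-)$ need not be essentially surjective onto $\mod{\End_{\cat C}(P)^{\mathrm{op}}}$.  This hypothesis is satisfied here because $\Hom(\mathbf h_{F^n},-)\cong\ev_n$ preserves arbitrary colimits (they are computed pointwise in the functor category), but you should state it.  Second, your handling of the case $\dim V>n$ is correct but compressed.  The clean formulation is: $V^{\otimes d}=\bigoplus_\lambda M_\lambda$ as $\SG_d$-modules, where $\lambda$ runs over weak compositions of $d$ with $\dim V$ parts and $M_\lambda$ is the span of the $\SG_d$-orbit of $e_1^{\otimes\lambda_1}\otimes\cdots\otimes e_{\dim V}^{\otimes\lambda_{\dim V}}$.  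Each $\lambda$ has at most $d\leq n$ nonzero parts, so the idempotent projector onto $M_\lambda$ factors $\SG_d$-equivariantly through $(F^n)^{\otimes d}$ via any injection of the support of $\lambda$ into $[n]$; summing these factorizations over $\lambda$ yields the desired expression $\id_{V^{\otimes d}}=\sum_\lambda\psi_\lambda\circ\varphi_\lambda$, and hence exhibits $\mathbf h_V$ as a retract of a finite sum of copies of $\mathbf h_{F^n}$.
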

\subsection{Koszul duality of strict polynomial functors:}
\label{sec:kosz-dual-strict}
In \cite{MR3077659}, Krause defined an internal tensor product $(\intP)$ on the category of strict polynomial functors of a fixed degree $d$.
Kulkarni, Srivastava, and Subrahmanyam \cite{KSS}, and independently, Acquilino and Reischuk \cite{MR3659331} showed that this internal tensor product, via the Schur functor, is related to the Kronecker tensor product of representations of the symmetric group $\SG_d$.
Krause used this internal tensor product to introduce Koszul duality as the functor $(\wedge^{d}\intP-):\Rep{\bf \Gamma}^{d}_{F}\to \Rep{\bf \Gamma}^{d}_{F}$.
We can think about this functor as follows, for the representable functor ${\bf h}_{V}\in\Rep{\bf \Gamma}^{d}_{F}$, we have, using the notation of \eqref{eq:para_strict},
\begin{equation}
  \label{eq:Fos_def}
  \wedge^{d}\intP {\bf h}_{V}=\wedge^{d,V}.
\end{equation}
For arbitrary $X\in\Rep{\bf \Gamma}^{d}_{F}$, following~\cite{MR3077659}, we exploit a theorem of Mac Lane \cite[III.7,Theorem 1]{MacLaneS}, namely:
\begin{equation}
\label{eq:MacLaneS}
  X=\colim{X}{V}{\bf h}_{V}.
\end{equation}
Using this we have:
\begin{equation}
  \wedge^{d}\intP X=\colim{X}{V}\wedge^{d}\intP{\bf h}_{V}
  =\colim{X}{V}\wedge^{d,V}.
\end{equation}
In the following theorem, we relate the abstract Koszul duality of Schur algebra with the Koszul duality of strict
polynomial functors.
\begin{theorem}
  \label{thm:duals_comapre}
  Consider the functors,
  \begin{align*}
    (\sch^-\otimes_{S}\ev_n(-))& :\Rep{\mathbf \Gamma}^{d}_{F}\to \mod \sch,\\
    \ev_n(\wedge^{d}\intP-)& :\Rep{\mathbf \Gamma}^{d}_{F}\to \mod \sch.
  \end{align*}
  Then there exists a natural transformation 
  \begin{displaymath}
  \eta:(\sch^-\otimes_{\sch}\ev_n(-))\longrightarrow\ev_n(\wedge^{d}\intP-),
  \end{displaymath}
 which is an isomorphism when $n\geq d$.
\end{theorem}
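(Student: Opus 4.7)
The plan is to reduce to the representable case via Mac Lane's formula \eqref{eq:MacLaneS} and then use that both functors in sight preserve colimits. Concretely, for a representable $X = \mathbf h_V$, one has $\ev_n(\mathbf h_V) = \Hom_{\SG_d}(V^{\otimes d}, (F^n)^{\otimes d})$ as a left $\sch$-module, and composition of $\SG_d$-equivariant maps gives a canonical $\sch$-module homomorphism
\begin{displaymath}
  \sch^-\otimes_\sch \ev_n(\mathbf h_V) = \sch^-\otimes_\sch \Hom_{\SG_d}(V^{\otimes d}, (F^n)^{\otimes d}) \longrightarrow \Hom_{\SG_d}(V^{\otimes d}, (F^n)^{\otimes d}\otimes \sgn).
\end{displaymath}
On the other side, by \eqref{eq:Fos_def} together with \eqref{eq:para_strict}, $\ev_n(\wedge^d\intP \mathbf h_V) = \wedge^d(\Hom_F(V,F^n))$, and the standard identification $\Hom_F(V,F^n)^{\otimes d}\cong \Hom_F(V^{\otimes d},(F^n)^{\otimes d})$ of $\SG_d$-modules together with $\wedge^d U \cong (U^{\otimes d}\otimes \sgn)^{\SG_d}$ identifies this with $\Hom_{\SG_d}(V^{\otimes d}, (F^n)^{\otimes d}\otimes \sgn)$. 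Composing defines $\eta_{\mathbf h_V}$.

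The second step is to extend from representables to arbitrary $X$. Both functors preserve colimits: $\sch^-\otimes_\sch -$ is a left adjoint, $\ev_n$ is just evaluation at $F^n$ and so commutes with pointwise colimits in $\Rep{\bf\Gamma}^d_F$, and Krause's $\wedge^d\intP-$ is defined in \eqref{eq:MacLaneS} precisely by the colimit formula $\colim_{\mathbf h_V\to X}\wedge^{d,V}$. Thus applying $\colim_{\mathbf h_V\to X}(-)$ to the natural transformation $\eta_{\mathbf h_V}$ built in the first step produces a natural transformation $\eta_X$ for every $X\in\Rep{\bf\Gamma}^d_F$, which is the required $\eta$.

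For the isomorphism assertion when $n\geq d$, it suffices (by the colimit argument) to check that $\eta_{\mathbf h_V}$ is an isomorphism for every $V$. The crucial input is Theorem~\ref{thm:FS}: for $n\geq d$, the functor $\ev_n$ is an equivalence of categories, so it is in particular fully faithful and preserves (co)limits as well as tensor products of the appropriate form. Under this equivalence, $\sch$ itself corresponds to $\mathbf h_{F^n}\cong \Gamma^d$, and one verifies that $\sch^-\cong \ev_n(\wedge^d\intP \mathbf h_{F^n})$ by unpacking both sides as $\Hom_{\SG_d}((F^n)^{\otimes d},(F^n)^{\otimes d}\otimes\sgn)$. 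Granting this, the map $\eta_{\mathbf h_V}$ is the image under $\ev_n$ of the canonical isomorphism $\wedge^d\intP \mathbf h_V\cong \wedge^{d,V}$, hence an isomorphism.

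The main obstacle will be carefully matching the composition-in-Hom-spaces that defines the left side with the colimit-based definition of Krause's $\intP$ on the right, i.e., showing that under $\ev_n$, tensoring with $\sch^-$ over $\sch$ corresponds precisely to applying $\wedge^d\intP-$. Once the identification $\sch^-\cong \ev_n(\wedge^{d,F^n})$ is in hand and one observes that both functors $\sch^-\otimes_\sch -$ and $\ev_n\circ(\wedge^d\intP-)$ agree on the Yoneda generator $\mathbf h_{F^n}$ (both give $\sch^-$), the required naturality and isomorphism statement follow formally from the commutation with colimits and the fact that $\mathbf h_{F^n}$ is a projective generator of $\Rep{\bf\Gamma}^d_F$ when $n\geq d$.
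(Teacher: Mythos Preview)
Your outline follows the paper's proof closely: define $\eta$ on representables by composition, identify $\ev_n(\wedge^d\intP\mathbf h_V)$ with $\Hom_{\SG_d}(V^{\otimes d},(F^n)^{\otimes d}\otimes\sgn)$ via \eqref{eq:Fos_def} and \eqref{eq:para_strict}, extend by Mac Lane's colimit formula, and for $n\geq d$ reduce to the projective generator $\mathbf h_{F^n}$. This is the same architecture as the paper.

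There is one genuine wobble. In your third paragraph you claim that $\eta_{\mathbf h_V}$ ``is the image under $\ev_n$ of the canonical isomorphism $\wedge^d\intP\mathbf h_V\cong\wedge^{d,V}$, hence an isomorphism.'' This is circular: the domain $\sch^-\otimes_\sch\ev_n(\mathbf h_V)$ is not a priori of the form $\ev_n(Y)$, and identifying it as such is precisely the content of the theorem you are proving. You seem to recognize this yourself, since your final paragraph names it as ``the main obstacle.'' The paper avoids this circularity by a direct check at the generator: for $V=F^n$ one simply observes that $\eta_{\mathbf h_{F^n}}(f\otimes\id_\sch)=f$, so $\eta_{\mathbf h_{F^n}}$ is surjective, and since both sides are isomorphic to $\sch^-$ it is an isomorphism. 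From there the colimit (equivalently, presentation-by-generator) argument you describe in your last paragraph gives the isomorphism for all $X$. Replace your third paragraph by this explicit surjectivity check and the proof goes through exactly as in the paper.
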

\begin{proof}
  Let $X={\bf h}_{V}$.
  Then,
  \begin{align*}
    \ev_n(\wedge^{d}\intP{\bf h}_{V})&= \ev_n(\wedge^{d,V})\text{ by \eqref{eq:Fos_def}}\\
                                      &=\wedge^{d,V}(F^n)\\
                                      &= \wedge^{d}\Hom_{F}(V,F^n) \text{ by Equation~\eqref{eq:para_strict}}\\
                                      &\simeq\Hom_{\SG_d}(V^{\otimes d},(F^{n})^{\otimes d}\otimes \sgn).
  \end{align*}
  On the other hand,
  \begin{align*}
    \sch^-\otimes_{\sch} \ev_n({\bf h}_{V})&= \sch^-\otimes_{\sch}{\bf h}_{V}(F^n)\\
                                      &= \Hom_{\SG_d}((F^n)^{\otimes d},(F^n)^{\otimes d}\otimes \sgn)\otimes_{\sch}\Hom_{\SG_d}(V^{\otimes d},(F^{n})^{\otimes d}).
  \end{align*}
  Using these identifications, $\eta_{{\bf h}_{V}}(g_1\otimes g_2) = g_1\circ g_2$, for $g_1\in \sch^-$ and $g_2\in \ev_n({\bf h}_{V})$, defines an $S$-linear map
  \begin{equation}
    \label{eq:iso}
    \eta_{{\bf h}_{V}}:\sch^-\otimes_{\sch} \ev_n({\bf h}_{V})\longrightarrow \ev_n(\wedge^{d}\intP{\bf h}_{V}).
  \end{equation}
  For arbitrary $X\in\Rep{\bf \Gamma}^{d}_{F}$, we construct $\eta_{X}$ using Equation~\eqref{eq:MacLaneS}:
  \begin{displaymath}
    \eta_X = \colim XV \eta_{\mathbf h_V}.
  \end{displaymath}
  From the Yoneda lemma~\cite[Page 59]{MacLaneS}, every morphism ${\bf h}_{V}\to {\bf h}_{W}$ between the representable functors is of the form $\mathbf h_f$ for a unique morphism $f\in \Hom_{\mathbf F^d_F}(W, V)$.
  The following diagram commutes:
  \begin{displaymath}
    \xymatrix{ \sch^-\otimes_{\sch} \ev_n({\bf h}_{V})\ar[r]^{\eta_{{\bf h}_{V}}}\ar[d]_{\mathrm{id}_{\sch^-}\otimes \ev_n(\mathbf h_f)} & \ev_n(\wedge^{d}\intP{\bf h}_{V})\ar[d]^{\ev_n(\mathrm{id}_{\wedge^d}\intP \mathbf h_f)}\\
      \sch^-\otimes_{\sch} \ev_n({\bf h}_{W})\ar[r]^{\eta_{W}} & \ev_n(\wedge^{d}\intP{\bf h}_{V})
    }
  \end{displaymath}
  Taking colimits then gives  the naturality of $\eta$.

  If $n\geq d$, ${\bf h}_{F^{n}}$ is a small projective generator of $\Rep \mathbf \Gamma^d_F$, i.e., every object has a presentation by ${\bf h}_{F^{n}}$, (see~\cite{MR3077659}). Note that the map $\eta_{{\bf h}_{F^{n}}}$~\eqref{eq:iso} is surjective because $\eta_{{\bf h}_{F^{n}}}(f\otimes \mathrm{id}_{\sch})=f$ for $f\in \sch^-$ and hence an isomorphism because $\ev_{n}(\wedge^{d}\intP{\bf h}_{F^{n}})$ is isomorphic to $\sch^-$.
   By the construction of $\eta_{X}$, this implies that each $\eta_{X}$ is an isomorphism for $X\in\Rep{\bf \Gamma}^{d}_{F}$.
\end{proof}
\subsection{Derived abstract Koszul duality}
For a finite dimensional associative algebra $A$, let $\mathcal{D}(\mod A)$ be the unbounded derived category of $\mod A$. For a $(A,A)$-bimodule $M$, the functor $(M\otimes_{A}-)$ is a right exact functor so the total left derived functor $(M\otimes_{A}^{\bf{L}}-)
:\mathcal{D}(\mod A)\to \mathcal{D}(\mod A)$ exists.
From Happel~\cite{Happel}, we recall necessary and sufficient conditions for the functor
$(M\otimes_{A}^{\bf{L}}-)$ to be an equivalence of categories.

For each $x\in A$, let $\psi_x\in \End_A(M_A)$ be defined by
\begin{displaymath}
  \psi_x(y) = xy.
\end{displaymath}
Taking $x$ to $\psi_x$ gives rise to a homomorphism of algebras:
\begin{equation}
\label{eq:donkin_map}
  \psi:A\to\End_{A}(M_{A}).
\end{equation}
Recall
\begin{theorem}
  [Happel \cite{Happel}]
  \label{thm:happel}
  For a finite dimensional algebra $A$ and a $(A,A)$-bimodule
  $M$, the functor $(M{\otimes}_{A}^{\bf L}-)
  :\mathcal{D}(\mod A)\to \mathcal{D}(\mod A)$ is an equivalence
  of categories if and only if
  \begin{enumerate}
  \item The module $M_{A}$ admits a finite resolution by finitely generated projective right modules
    over $A$.
  \item The canonical map $\psi:A\to\End_{A}(M_{A})$ is an isomorphism,
    and for $i\geq 1$, $\mathrm{Ext}^{i}_{A}(M,M)=0$.
  \item There exists an exact sequence consisting of right $A$-modules:
    \begin{displaymath}
      0\to A\to M_{1}\to\cdots\to M_{l}\to 0,
    \end{displaymath}
    where for $1\leq i\leq l$, $M_i$ is a direct summand
    of finite direct sum of copies of $M$.
  \end{enumerate}
\end{theorem}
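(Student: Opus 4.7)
The plan is to recognize Theorem~\ref{thm:happel} as a derived analogue of Morita's theorem in which the bimodule $M$ plays the role of a progenerator. I would prove both directions through the adjoint pair of endofunctors $(M \otimes_{A}^{\mathbf{L}} -,\, \mathbf{R}\Hom_{A}(M, -))$ on $\mathcal{D}(\mod A)$, showing that the unit and counit of this adjunction are natural isomorphisms exactly when the three listed conditions all hold.

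For the forward direction, assume $F = M \otimes_{A}^{\mathbf{L}} -$ is an equivalence. Then $F$ must preserve compact generators, so $F(A) = M$ is itself a compact generator of $\mathcal{D}(\mod A)$. Compactness is equivalent to admitting a finite resolution by finitely generated projective right $A$-modules, which is condition (1). Since an equivalence induces isomorphisms on graded endomorphism algebras, $A \cong \End^{*}_{\mathcal{D}(\mod A)}(A) \cong \End^{*}_{\mathcal{D}(\mod A)}(M)$; the right-hand side unpacks into $\End_{A}(M_A)$ concentrated in degree zero, together with the vanishing of $\mathrm{Ext}^{i}_{A}(M,M)$ for $i \geq 1$, matching the canonical algebra homomorphism $\psi$ of \eqref{eq:donkin_map}, which is condition (2). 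Finally, since $A$ lies in the essential image of $F$ up to quasi-isomorphism, and $A$ is concentrated in the heart of the standard $t$-structure, one obtains condition (3) as the concrete assertion that $A$ can be resolved to the right by finite direct sums of summands of $M$.

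For the backward direction, I would define the tentative quasi-inverse $G = \mathbf{R}\Hom_{A}(M,-)$. Condition (2) immediately yields $G F(A) \cong A$ and $F G(M) \cong M$, via the isomorphism $\psi$ together with the $\mathrm{Ext}$-vanishing. Condition (3) then promotes these isomorphisms on the generators to isomorphisms on all of $\mathcal{D}(\mod A)$ by dévissage: the module-theoretic exact sequence translates into a finite sequence of distinguished triangles expressing $A$ in terms of objects on which the counit $FG \to \id$ is already known to be an isomorphism. Condition (1) ensures that both $F$ and $G$ commute with arbitrary coproducts and have bounded cohomological amplitude, so the isomorphism extends from compact objects to the full unbounded derived category by the standard compact-generator argument.

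The main obstacle is the dévissage step: propagating the isomorphism from $M$ to $A$ along the exact sequence in (3) requires carefully translating the module-level sequence into distinguished triangles, applying the derived-category five lemma to successive cones, and using that both $F$ and $G$ are triangulated. A subsidiary technical point is the precise identification of $\End^{*}_{\mathcal{D}(\mod A)}(M)$ with $\End_A(M_A)$ in degree zero and zero elsewhere — exactly the content of (2) — which must be checked to be compatible with the algebra structure so that the homomorphism $\psi$ is genuinely the one induced by $F$.
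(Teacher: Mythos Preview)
The paper does not prove Theorem~\ref{thm:happel}. It is introduced with the word ``Recall'' and is attributed to Happel~\cite{Happel}; the authors simply quote it as a black box and immediately apply it in the proofs of Theorems~\ref{thm:iso_donkin} and~\ref{prop:deri_kos}. So there is no proof in the paper to compare your proposal against.

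That said, your sketch is a reasonable outline of the standard derived-Morita/tilting argument that underlies Happel's original proof and its later streamlined versions (Rickard, Keller). A few comments on the sketch itself. In the forward direction, your derivation of condition~(3) is too quick: knowing that $A$ lies in the essential image of $F$ tells you only that $A$ is in the thick subcategory generated by $M$, and passing from that to an honest right resolution $0\to A\to M_1\to\cdots\to M_l\to 0$ by $\mathrm{add}(M)$-objects requires an additional argument (this is where the Ext-vanishing in condition~(2) is actually used, to build the coresolution inductively by approximations). Also, ``compactness is equivalent to admitting a finite projective resolution'' is the right idea but needs the hypothesis that $A$ is finite dimensional (or at least noetherian) to hold as stated. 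In the backward direction your d\'evissage plan is correct; the only missing ingredient is an explicit check that $M$ generates $\mathcal D(\mod A)$, which follows from condition~(3) since the coresolution exhibits $A$ in the triangulated subcategory generated by $M$.
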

\begin{theorem}
  \label{thm:iso_donkin}
  Let $A=\sch$ and $M$ be the $(\sch,\sch)$-bimodule $\sch^-$. Then the map $\psi$ in Equation~\eqref{eq:donkin_map} is an isomorphism if and only if $n\geq d$.
\end{theorem}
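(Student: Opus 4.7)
The plan is to prove the two directions of the equivalence separately, using the combinatorial framework of Section~\ref{sec:altern-schur-algebra}.

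For $n \geq d \Rightarrow \psi$ is an isomorphism, I would first establish injectivity. Suppose $x = \sum_{\Gamma \in M(n,d)} c_\Gamma \xi_\Gamma$ satisfies $\psi(x) = 0$, so $x \cdot \zeta_{D(\Gamma_0)^*} = 0$ for every $\Gamma_0 \in M(n,d)$. By Lemma~\ref{lm:inj}, the coefficient of $\zeta_{U(\Gamma_0)}$ in $\xi_\Gamma \cdot \zeta_{D(\Gamma_0)^*}$ equals $\delta_{\Gamma,\Gamma_0}$, so the corresponding coefficient in $x \cdot \zeta_{D(\Gamma_0)^*}$ is exactly $c_{\Gamma_0}$; hence $c_{\Gamma_0} = 0$ for every $\Gamma_0$, giving $x = 0$. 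For surjectivity, given $f \in \End_\sch(\sch^-_\sch)$ I would set $c_\Gamma$ to be the coefficient of $\zeta_{U(\Gamma)}$ in $f(\zeta_{D(\Gamma)^*})$ and define $x_f := \sum_\Gamma c_\Gamma \xi_\Gamma$, then show $\psi(x_f) = f$. The difference $g := f - \psi(x_f)$ is a right-$\sch$-module endomorphism whose $\zeta_{U(\Gamma)}$-coefficient of $g(\zeta_{D(\Gamma)^*})$ vanishes for every $\Gamma$ by construction. Combining right-$\sch$-linearity of $g$ with the decomposition $\zeta_\Delta = \zeta_{U(\Delta)} \xi_{D(\Delta)}$ of Lemma~\ref{lm:lrmodule} and the bijection $\bar\Phi : N^d \times_{\SG_d} N_d \to M(n,d)$ from the proof of Lemma~\ref{lemma:zetazeta}, these vanishing conditions force $g = 0$.

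For $n < d \Rightarrow \psi$ is not an isomorphism, I would exhibit an explicit nonzero element of $\ker\psi$. Take $\lambda := (d, 0, \dotsc, 0) \in \Lambda(n,d)$; since $n < d$, we have $\lambda_1 = d > n$. The basis element $\xi_{\Gamma^0_\lambda} \in \sch$, in the notation of Example~\ref{example:zetazeta}, is the idempotent projection of $(F^n)^{\otimes d}$ onto its one-dimensional $\lambda$-weight subspace (spanned by $e_1^{\otimes d}$). For any $\phi \in \sch^-$, the composition $\xi_{\Gamma^0_\lambda} \circ \phi$ lands in the $\lambda$-weight subspace $W_\lambda$ of $(F^n)^{\otimes d} \otimes \sgn$, which as an $\SG_d$-representation is isomorphic to $\sgn$ because $\SG_\lambda = \SG_d$. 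From the orbit decomposition $(F^n)^{\otimes d} = \bigoplus_{\mu \in \Lambda(n,d)} \mathrm{Ind}_{\SG_\mu}^{\SG_d} \mathrm{Triv}$ together with Frobenius reciprocity, $\Hom_{\SG_d}((F^n)^{\otimes d}, \sgn) = \bigoplus_\mu \Hom_{\SG_\mu}(\mathrm{Triv}, \sgn|_{\SG_\mu})$, and each summand vanishes unless every part of $\mu$ is at most $1$, which would require $d \leq n$. Since $n < d$, every summand is zero, so $\xi_{\Gamma^0_\lambda} \cdot \phi = 0$ for every $\phi \in \sch^-$; therefore $0 \neq \xi_{\Gamma^0_\lambda} \in \ker\psi$, and $\psi$ fails to be injective.

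The main obstacle will be the surjectivity step when $n \geq d$: while the candidate element $x_f$ is immediate from Lemma~\ref{lm:inj}, verifying $\psi(x_f) = f$ requires a careful bookkeeping through the structure constants of Theorem~\ref{theorem:stru-consts-alt}, using the orbit correspondence encoded by $\bar\Phi$ to show that the single coefficient $c_\Gamma$ per $\Gamma \in M(n,d)$ suffices to recover $f$ on all of $\sch^-$.
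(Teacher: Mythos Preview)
Your injectivity argument for $n\geq d$ and your $n<d$ argument are both correct and essentially match the paper. For $n<d$ the paper uses the same element $\xi_{\Gamma^0_{(d,0,\dots,0)}}$ but argues combinatorially via Definition~\ref{def:compatible} rather than via Frobenius reciprocity; either works.

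The one genuine difference is surjectivity when $n\geq d$. You attempt to build an explicit preimage $x_f$ and then show the residual $g=f-\psi(x_f)$ vanishes; you correctly flag this last step as the main obstacle. The paper sidesteps this entirely with a dimension count: it shows directly that $\dim_F\End_{\sch}(\sch^-_\sch)\leq |M(n,d)|=\dim_F\sch$, which together with injectivity forces $\psi$ to be an isomorphism. The bound is obtained by noting that any $\theta\in\End_{\sch}(\sch^-_\sch)$ is determined by its matrix coefficients $\langle\theta(\zeta_\Gamma),\zeta_{\Gamma'}\rangle$ for $\Gamma,\Gamma'\in N^d$, and that these coefficients are invariant under the diagonal $\SG_d$-action $(\Gamma,\Gamma')\mapsto(\Gamma\cdot w,\Gamma'\cdot w)$, so at most $|(N^d\times N^d)/\SG_d|=|M(n,d)|$ of them are independent. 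Your proposed route is not wrong---the ingredients you list (right-$\sch$-linearity, Lemma~\ref{lm:lrmodule}, the bijection $\bar\Phi$) are exactly what one needs to carry out the same orbit count---but the paper's framing as a pure dimension bound is cleaner and avoids having to verify that your specific choice of $|M(n,d)|$ vanishing conditions (one per $\Gamma_0$, namely the $\zeta_{U(\Gamma_0)}$-coefficient of $g(\zeta_{D(\Gamma_0)^*})$) really does hit every $\SG_d$-orbit in $N^d\times N^d$.
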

\begin{remark}
  When $n\geq d$, it is known that $\psi$ is an isomorphism, even for $q$-Schur algebras (see Donkin~\cite[p.~82]{Donkinbook}).
\end{remark}
\begin{proof}
  Suppose $n<d$. Consider the following labelled bipartite multigraph,
  \begin{displaymath}
    \Gamma_{1}=\vcenter{\xymatrix{
        \bullet \ar@3{-}[d]|{1 2\cdots d} & \bullet & \dotsc & \bullet\\
        \bullet & \bullet &\dotsc & \bullet
      }}
  \end{displaymath}
  Let $\Gamma_{2}\in N(n,d)$. Since $\Gamma_{2}$ is a simple bipartite graph therefore any labelling of $\Gamma_{2}$ which satisfies the condition $1$ in Definition~\ref{def:compatible} requires
  $d$ balls to place into $d$ distinct boxes out of $n$. This is not possible as $n<d$. Thus we get that
  $\xi_{{\Gamma}_{1}}\zeta_{\Gamma_{2}}=0$. Since $\zeta_{\Gamma}$ for
  $\Gamma\in N(n,d)$ forms a basis of $\sch^-$ we get $\xi_{\Gamma_{1}}$ is in the kernel of $\psi$, and so $\psi$ is not injective.

  For the converse, suppose $n\geq d$. Then the map $\psi$ is an isomorphism
  is known from~\cite[Proposition 3.7]{Donkin}. But we give a combinatorial proof here. 
  For $\theta\in \End_{\sch}(\sch^-_{\sch})$, we denote the coefficient of 
  $\zeta_{\Gamma_{1}}$ in $\theta(\zeta_{\Gamma_{2}})$ by
  $\langle\theta(\zeta_{\Gamma_{2}}),\zeta_{\Gamma_{1}}\rangle$.

  To see that $\psi$ is injective, note that any element of $\sch$ is of the form $s=\sum_{\Gamma\in M(n,d)} \alpha_{\Gamma}\xi_{\Gamma}$.
  Now $\psi(s)=0$ if and only if
  \begin{displaymath}
    \sum_{\Gamma\in M(n,d)} \alpha_{\Gamma}\xi_{\Gamma}\zeta_{\Gamma'}=0,
  \end{displaymath}
  for every $\Gamma'\in N(n,d)$.
  Fix $\Gamma_{1}\in M(n,d)$ and let $\Gamma'=D(\Gamma_{1})^*$.
  Then by Lemma \ref{lm:inj}, 
  \begin{displaymath}
    \alpha_{\Gamma_{1}}=\Big\langle\sum_{\Gamma\in M(n,d)} \alpha_{\Gamma}\xi_{\Gamma}\zeta_{D(\Gamma_{1})^*},\zeta_{U(\Gamma_{1})}\Big\rangle.
  \end{displaymath}
  Thus $\alpha_{\Gamma_{1}}=0$.

  To see that $\psi$ is surjective, we will show that $\dim_F\End_{\sch}(\sch^-_\sch)\leq |M(n,d)|$.
  Firstly, by Lemma~\ref{lm:lrmodule}, $\sch^-_\sch$ is generated by $G=\{\zeta_\Gamma\mid \Gamma\in N^d\}$.
  Recall that $N^d$ denotes the set of graphs in $N(n,d)$ with upper degree sequence $(1^d,0^{n-d})$.
  Therefore any $\theta\in \End_{\sch}(\sch^-_\sch)$ is determined by its values on this set.
  Since $\theta$ is an $\sch$-module homomorphism $\theta(\zeta_\Gamma)$ again lies in the span of $G$.
  Therefore $\theta$ is completely determined by the values:
  \begin{displaymath}
    \{\langle\theta(\zeta_{\Gamma}),\zeta_{\Gamma'}\rangle\mid \Gamma,\Gamma'\in N^d\}.
  \end{displaymath}
  Moreover, for any $w\in \SG_d$,
  \begin{displaymath}
    \langle\theta(\zeta_{\Gamma}),\zeta_{\Gamma'}\rangle = \langle\theta(\zeta_{\Gamma\cdot w}),\zeta_{\Gamma'\cdot w}\rangle.
  \end{displaymath}
  Therefore $\dim_F\End_{\sch}(\sch^-_\sch)\leq |(N^d\times N^d)/\SG_d|=|M(n,d)|$.
\end{proof}
  
\begin{theorem}
  \label{prop:deri_kos}
  Let $F$ be any field of characteristic different from $2$.
  The functor
  \begin{equation}
    \label{eq:deri_kos}
    (\sch^-\otimes_{\sch}^{\bf L}-):\mathcal{D}(\mod \sch)\to\mathcal{D}(\mod \sch)
  \end{equation}
  is an equivalence of categories if and only if $n\geq d$.
\end{theorem}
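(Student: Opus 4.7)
The proof will proceed by invoking Happel's criterion (Theorem \ref{thm:happel}) applied to $A=\sch_F(n,d)$ and $M=\sch^-_F(n,d)$. Recall from Lemma \ref{lm:tilting} that $\sch^-_F(n,d)$ is a full tilting module for $\sch_F(n,d)$, and that the key algebra map in Happel's condition (2) is precisely the map $\psi$ whose behavior has been settled in Theorem \ref{thm:iso_donkin}. The plan is therefore to run these two facts through Happel.

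For the forward direction ($n\geq d$ implies equivalence), I would verify each of Happel's three conditions in turn. Condition (2): the isomorphism $\psi:\sch\to\End_\sch(\sch^-_\sch)$ is exactly the content of Theorem \ref{thm:iso_donkin} in the case $n\geq d$; the vanishing $\mathrm{Ext}^i_\sch(\sch^-,\sch^-)=0$ for $i\geq 1$ follows from the standard fact that for a tilting module $T$ over a quasi-hereditary algebra one has $\mathrm{Ext}^i(T,T)=0$ (here both $\sch^-$ and its dual admit Weyl/good filtrations, and $\sch_F(n,d)$ is quasi-hereditary). Condition (1): $\sch^-$ has a finite coresolution by modules with good filtration and a finite projective resolution, both consequences of tilting theory for quasi-hereditary algebras together with the finite global dimension of $\sch_F(n,d)$ when $n\geq d$. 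Condition (3): one uses the resolution of the regular module by direct summands of finite sums of $\sch^-$, which is exactly the defining property of a full tilting module (Donkin, \cite[Section 3]{Donkin}).

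For the reverse direction ($n<d$ implies non-equivalence), the argument is immediate: Theorem \ref{thm:iso_donkin} tells us that when $n<d$ the canonical map $\psi:\sch\to\End_\sch(\sch^-_\sch)$ fails to be an isomorphism (in fact it has non-trivial kernel, witnessed combinatorially by $\xi_{\Gamma_1}$ for the graph $\Gamma_1$ concentrated on a single vertex pair with all $d$ edges). Hence Happel's condition (2) fails, so $\sch^-\otimes^{\bf L}_\sch-$ cannot be an equivalence. Note that this direction uses only the ``only if'' part of Happel's criterion, which is unconditional.

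The main obstacle is the tilting/$\mathrm{Ext}$-vanishing half of condition (2) and the coresolution in condition (3) for $n\geq d$: these are not combinatorially verified in the present paper, so the cleanest route is to invoke Donkin's tilting results for Schur algebras (\cite[Section 3]{Donkin}) directly rather than give a self-contained proof, paralleling how Lemma \ref{lm:tilting} was established. The reverse direction, by contrast, is essentially a one-line application of Theorem \ref{thm:iso_donkin}, and the combinatorial content of the theorem lives entirely there.
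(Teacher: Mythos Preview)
Your reverse direction ($n<d$) coincides with the paper's: both invoke Theorem~\ref{thm:iso_donkin} to see that $\psi$ is not an isomorphism, so Happel's necessary condition fails.

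Your forward direction is genuinely different. The paper does \emph{not} verify Happel's three conditions directly; instead it uses Theorem~\ref{thm:duals_comapre} to identify $\sch^-\otimes_{\sch}\ev_n(-)$ with $\ev_n(\wedge^d\intP-)$, then transports Krause's equivalence $(\wedge^d\intP^{\mathbf L}-)$ on $\mathcal D(\Rep\mathbf\Gamma^d_F)$ through the Friedlander--Suslin equivalence $\ev_n$ (Theorem~\ref{thm:FS}). Your route---checking Happel's conditions via tilting theory for quasi-hereditary algebras---is valid and in some sense more elementary, since it avoids the strict-polynomial-functor comparison entirely; on the other hand it requires importing Ringel/Donkin results (Ext-orthogonality of tilting modules, the coresolution of $A$ in $\mathrm{add}(T)$) that are not proved in the paper, whereas the paper's route packages all of this into Krause's theorem. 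Two small points to tighten in your write-up: Happel's conditions are stated for the \emph{right} module $\sch^-_{\sch}$, while Lemma~\ref{lm:tilting} gives the full-tilting property on the left, so you should invoke the anti-involution of Lemma~\ref{lemma:anti-invol} to pass to the right side; and the coresolution in condition~(3) is a \emph{consequence} of the full-tilting property (via Ringel), not its definition as stated in the paper.
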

\begin{figure}
  \centering
  \includegraphics[width=\textwidth]{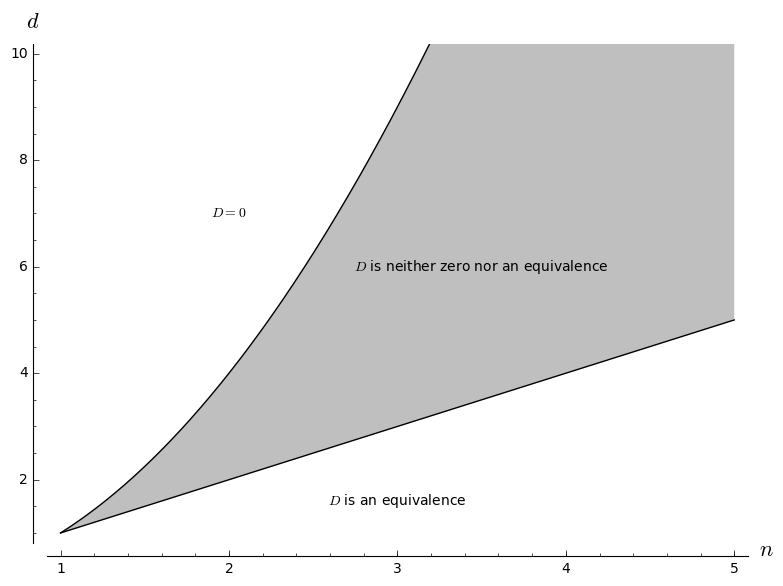}
  \caption[Behavior of Koszul duality]{Dependence of derived Koszul duality on $n$ and $d$}
  \label{fig:kd}
\end{figure}
\begin{proof}
  If $n\geq d$ then from Theorem~\ref{thm:duals_comapre}, $\sch^-\otimes_{\sch}\ev_n(-)$ is isomorphic to $\ev_n(\wedge^{d}\intP-)$.
  Since the functor $\ev_n$ is exact so the total left derived functors of $\sch^-\otimes_{\sch}\ev_n(-)$ and $\ev_n(\wedge^{d}\intP-)$ are isomorphic to $\sch^-\otimes_{\sch}^{\bf L}\ev_n(-)$ and
  $\ev_n(\wedge^{d}\intP^{\bf L}-)$ respectively.
  From~\cite[Theorem 4.9]{MR3077659}, $(\wedge^{d}\intP^{\bf L}-)$ is an equivalence.
  From Theorem~\ref{thm:FS} $\ev_n(-)$ is an equivalence.
  Therefore $\sch^-\otimes_{\sch}^{\bf L}\ev_n(-)$ is an equivalence.
  So $\ev_n(-)$ being an equivalence forces $(\sch^-\otimes_{\sch}^{\bf L}-)$ to be an equivalence.

  For the converse, suppose that $(\sch^-\otimes_{\sch}^{\bf L}-)$ is an equivalence.
  Then from Theorem~\ref{thm:happel} the map $\psi:\sch\to\End_{\sch}(\sch^-)$ is an isomorphism.
  Theorem~\ref{thm:iso_donkin} now implies that $n\geq d$.
\end{proof}
The behavior of derived Koszul duality for different values of $n$ and $d$ is summarized in Figure~\ref{fig:kd}.
\section{Concluding remarks}
\label{sec:concluding-remarks}
\subsection{Towards alternating partition algebras}
\label{sec:towards-altern-part}
The centralizer algebra \linebreak$\End_{\SG_n}((F^n)^{\otimes d})$ is a quotient of the partition algebra $\mathrm P_{d}(n)$ of Jones \cite{Jones} and Martin \cite{Martin}. 
Further restricting the action of $\SG_n$ to the alternating group $\AG_n$ we get the \emph{alternating partition algebra}  $\mathrm{AP}_d(n)=\End_{\AG_n}((F^n)^{\otimes d})$, which from the isomorphism \eqref{eq:Alt-end} decomposes as follows:
\begin{equation}
  \End_{\AG_n}((F^n)^{\otimes d})=\End_{\SG_n}((F^n)^{\otimes d})\oplus \Hom_{\SG_n}((F^n)^{\otimes d},(F^n)^{\otimes d}\otimes \sgn)
\end{equation}

Let $\mathrm P^-_{d}(n)=\Hom_{\SG_n}((F^n)^{\otimes d},(F^n)^{\otimes d}\otimes \sgn)$. Then $\mathrm P^-_{d}(n)$ becomes a\linebreak $(\mathrm P_{d}(n), \mathrm P_{d}(n))$-bimdoule by inflation.
Bloss~\cite{Bloss} showed $\mathrm P^-_{d}(n)$ is non-zero if and only if $n<2d+2$. So we get an abstract Koszul duality $(\mathrm P_{d}^-(n)\otimes_{\mathrm P_{d}(n)}-)$ on the category of modules over the partition algebra $\mathrm P_{d}(n)$ when $n<2d+2$.
Many of the ideas and techniques in this article can be used to study bases, structure constants, and abstract Koszul duality for partition algebras. For $F=\CC$, the dimensions of simple modules of $\mathrm{AP}_d(n)$ are given combinatorially by Benkart, Halverson, and Harman, see~\cite{MR3666413}.

\subsection{A diagrammatic interpretation of the Schur category}
\label{sec:diagr-interpr-divid}
The following is one possible way to define the notion of a diagram category in the spirit of Martin's discussion in \cite{martin2008diagram}.
\begin{definition}
  [Diagram Category]
  \label{definition:diagram-cat}
  A category $\cat C$ is called a \emph{diagram category} if there exists a sequence $\{V_n\}_{n\geq 0}$ of objects which constitute a skeleton of $\cat C$, and for each pair $(m,n)$ of non-negative integers, a class of ``diagrams'' $M(m,n)$, a basis
  \begin{displaymath}
    \mathcal B_{m,n} = \{\xi_\Gamma\mid \Gamma\in M(m,n)\}
  \end{displaymath}
  of $\Hom_{\cat C}(V_n,V_m)$, and a combinatorial rule for computing the structure constants $c^\Gamma_{\Gamma'\Gamma''}$ that are defined by:
  \begin{displaymath}
    \xi_{\Gamma'}\circ \xi_{\Gamma''} = \sum_{\Gamma} c^{\Gamma}_{\Gamma'\Gamma''}\xi_\Gamma
  \end{displaymath}
  for $\Gamma'\in M(l,m)$, $\Gamma''\in M(m,n)$ and $\Gamma\in M(l,n)$.
\end{definition}
\begin{remark}
  In the examples discussed by Martin \cite{martin2008diagram}, given diagrams $\Gamma'$ and $\Gamma''$, there can exist more than one diagram $\Gamma$ such that $c^\Gamma_{\Gamma'\Gamma''}>0$.
  This is not a requirement in the above definition.
\end{remark}
Consider the Schur category $\mathbf \Gamma^d_F$ defined in Section~\ref{sec:strict-polyn-funct}.
Take $V_n=F^n$.
Define $M_d(m,n)$ to be the set of all bipartite multigraphs with vertex set $[n']\coprod [m]$ with $d$ edges.
Mimicking the discussion in Section~\ref{sect:scschur}, one may endow the Schur category with the structure of a diagram category in the sense of Definition~\ref{definition:diagram-cat}.
\section*{Acknowledgements}
GT was supported by the Humboldt Foundation, Institute of Algebra and Number Theory, University of Stuttgart, and by a SERB MATRICS grant (MTR/2017/\linebreak 000424) of the Department of Science \& Technology, India.
AP was supported by a swarnajayanti fellowship (DST/SJF/MSA-02/2014-15) of the Department of Science \& Technology, India.
SS was supported by a national postdoctoral fellowship (PDF/2017/000861) of the Department of Science \& Technology, India.
The authors thank Steffen K\"onig and Upendra Kulkarni for many helpful suggestions.

\end{document}